\numberwithin{equation}{section}
\title{On the exact decomposition threshold for even cycles}
\date{\today}
\author{Amelia Taylor}
\newtheorem{firstthm}{Proposition}[section]
\newtheorem{theorem}[firstthm]{Theorem}
\newtheorem{prop}[firstthm]{Proposition}
\newtheorem{lemma}[firstthm]{Lemma}
\newtheorem{fact}[firstthm]{Fact}
\newdimen\margin   
\def\textno#1&#2\par{%
   \margin=\hsize
   \advance\margin by -4\parindent
          \setbox1=\hbox{\sl#1}%
   \ifdim\wd1 < \margin
      $$\box1\eqno#2$$%
   \else
      \bigbreak
      \hbox to \hsize{\indent$\vcenter{\advance\hsize by -3\parindent
      \it\noindent#1}\hfil#2$}%
      \bigbreak
   \fi}
\newcommand{\claim}[2]{\medskip \noindent \textbf{#1: } \emph{#2}}
\begin{document}

\def\COMMENT#1{}
\def\TASK#1{}

\def\eps{{\varepsilon}}
\newcommand{\ex}{\mathbb{E}}
\newcommand{\pr}{\mathbb{P}}
\newcommand{\cA}{\mathcal{A}}
\newcommand{\cB}{\mathcal{B}}
\newcommand{\cC}{\mathcal{C}}
\newcommand{\cD}{\mathcal{D}}
\newcommand{\cE}{\mathcal{E}}
\newcommand{\cF}{\mathcal{F}}
\newcommand{\cH}{\mathcal{H}}
\newcommand{\cS}{\mathcal{S}}
\newcommand{\cP}{\mathcal{P}}
\newcommand{\N}{\mathbb{N}}
\newcommand{\cupdot}{\mathbin{\mathaccent\cdot\cup}}
\newcommand{\dbip}{\delta_{\text{\textup{bip}}}}
\newcommand{\con}{\text{con}}

\begin{abstract}\noindent
A graph $G$ has a $C_k$-decomposition if its edge set can be partitioned into cycles of length $k$. We show that if $\delta(G)\geq 2|G|/3-1$, then $G$ has a $C_4$-decomposition, and if $\delta(G)\geq |G|/2$, then $G$ has a $C_{2k}$-decomposition, where $k\in \N$ and $k\geq 4$ (we assume $G$ is large and satisfies necessary divisibility conditions). These minimum degree bounds are best possible and provide exact versions of asymptotic results obtained by Barber, K\"uhn, Lo and Osthus. In the process, we obtain asymptotic versions of these results when $G$ is bipartite or satisfies certain expansion properties.
\end{abstract}

\maketitle

\section{Introduction}\label{sec:even:intro}

Let $F$ and $G$ be graphs. We say that $G$ has an \emph{$F$-decomposition} (or is \emph{$F$-decomposable}) if its edge set can be partitioned into copies of $F$. One of the first results in the study of graph decompositions was due to Kirkman~\cite{kirk} who gave conditions for a clique to have a $K_3$-decomposition. His result was generalised by Wilson~\cite{Wilson} who determined when large cliques have $F$-decompositions for arbitrary $F$. When $G$ is not a clique, the problem becomes more challenging and the corresponding decision problem is NP-complete \cite{DT}. 

Clearly, every graph which has an $F$-decomposition must satisfy certain vertex degree and edge divisibility conditions. There have been many recent developments bounding the $F$-decomposition threshold, that is, the minimum degree which ensures an $F$-decomposition in any large graph satisfying the necessary divisibility conditions.
General results on the $F$-decomposition threshold establishing a close connection to its fractional counterpart are obtained in \cite{mindeg} and \cite{stef}. Moreover, \cite{mindeg} determines the asymptotic decomposition threshold for even cycles and \cite{stef} generalises this to arbitrary bipartite graphs. The results in \cite{mindeg} and \cite{stef} can be combined with bounds for the fractional version of this problem in \cite{bklmo} and \cite{dross} to obtain good explicit bounds on the $F$-decomposition threshold. Corresponding results for the multipartite setting (with applications to the completion of partially filled Latin squares) were considered in \cite{rpart}, \cite{Dukes2} and \cite{mont}.
The only known exact minimum degree bound (prior to Theorem~\ref{thm:mainc2k}) was obtained by Yuster~\cite{yusttree} who studied the case when $F$ is a tree.

From here on, we restrict our attention to the case when $F$ is a cycle.
We say that $G$ is \emph{$C_{k}$-divisible} if $e(G)$ is divisible by $k$ and every vertex of $G$ has even degree. Note that any graph which has a $C_k$-decomposition is necessarily $C_k$-divisible.
For each $k\in \N$ with $k\geq 2$, let us define
	\begin{equation*}
	\delta_k:=
	\begin{cases}
	2/3 & \text{if}\ k=2,\\
	1/2 & \text{if}\ k\geq 3.
	\end{cases}
	\end{equation*}
Barber, K\"uhn, Lo and Osthus \cite{mindeg} proved asymptotically best possible minimum degree bounds for a graph to have a $C_{2k}$-decomposition.

\begin{theorem}[\cite{mindeg}]\label{thm:witheps}
Let $k\in \N$ with $k\geq 2$. For each $\eps>0$, there is an $n_0$ such that every $C_{2k}$-divisible graph $G$ on $n\geq n_0$ vertices with $\delta(G)\geq (\delta_k+\eps)n$ has a $C_{2k}$-decomposition.
\end{theorem}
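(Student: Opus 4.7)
My plan is to use the iterative absorbing method for $F$-decompositions. The proof splits into three steps: constructing an absorber $A \subseteq G$, obtaining an approximate decomposition of $G\setminus A$, and combining them.

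For the absorber step, I would reserve a small subgraph $A \subseteq G$ built as a union of many small edge-disjoint ``gadgets,'' each targeting a specific edge or small configuration that might appear in the leftover. The defining property is that, given an appropriate $C_{2k}$-divisible leftover $R$, both $A$ and $A \cup R$ admit $C_{2k}$-decompositions, so that one may swap between them to absorb~$R$. The minimum degree condition $\delta(G)\geq (\delta_k+\eps)n$ ensures that there is enough room to place the required gadgets edge-disjointly in~$G$, and the precise structure of the gadgets can be designed using short cycles through prescribed edges.

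For the approximate decomposition step, the key ingredient is to prove that the fractional $C_{2k}$-decomposition threshold is at most $\delta_k$: any graph $H$ with $\delta(H)\geq (\delta_k+\eps)n$ admits a non-negative weighting on copies of $C_{2k}$ with total weight $1$ on every edge. Combined with standard nibble-based machinery (along the lines of Haxell--R\"odl or Kahn-type theorems), this converts a fractional decomposition into a near-perfect integral decomposition of $G\setminus A$, leaving only a sparse $C_{2k}$-divisible leftover $R$. The completion step then uses the absorber: $A \cup R$ has a $C_{2k}$-decomposition, and combined with the approximate decomposition this yields a full $C_{2k}$-decomposition of~$G$.

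The main obstacle is establishing the sharp fractional threshold $\delta_k$, especially the value $2/3$ for $C_4$. Just below $2/3$, nearly tripartite graphs form the extremal obstruction: even fractional $C_4$-decompositions become infeasible because $C_4$'s on a tripartite structure avoid certain edges too often. So the argument must leverage the extra $\eps n$ density to handle the near-tripartite case separately from the generic case, perhaps by splitting on how close $G$ is to tripartite and constructing fractional weights by hand in the near-extremal regime. For $k \geq 3$, the $1/2$ bound corresponds to the less rigid extremal example of two almost-cliques joined by few edges, and its fractional version is comparatively tractable via averaging arguments. The absorber construction follows a fairly standard template, but making it simultaneously handle any possible $C_{2k}$-divisible leftover requires careful edge counting and book-keeping.
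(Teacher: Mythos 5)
This theorem is not proved in the present paper; it is imported from \cite{mindeg}, and the paper's outline (and its analogues, Theorem~\ref{thm:bipartiteversion} and Theorem~\ref{thm:nuexpander}, proved in Sections~\ref{sec:bipartite} and~\ref{sec:expanderdecomp}) shows what the actual argument looks like. Your sketch starts from the right family of ideas but has two concrete misdirections. First, the fractional-threshold-plus-nibble step is not the route here, and more importantly you have misattributed where $\delta_k$ comes from. Since $C_{2k}$ is bipartite, an $\eta$-approximate $C_{2k}$-decomposition of any dense graph is obtained greedily from the Erd\H{o}s--Stone theorem; no fractional relaxation or R\"odl-nibble conversion is needed, and there is no ``sharp fractional threshold $2/3$'' to establish. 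The value $2/3$ for $C_4$ enters through the \emph{absorber}: the $C_4$-transformer of Section~\ref{sec:specialc4trans} requires embedding vertices with up to three previously embedded neighbours, i.e.\ common neighbourhoods of triples, which is exactly what $\delta(G)>2n/3$ buys. The matching lower bound (Proposition~\ref{prop:examples:c2k}\eqref{extremalc4}) is a parity obstruction ($e(A)$ odd inside a clique part), not a fractional infeasibility; your claim that near-tripartite graphs make even fractional $C_4$-decompositions infeasible below $2/3$ is not the relevant phenomenon.

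Second, the absorption architecture as you describe it does not close. A greedy or nibble-type approximate decomposition leaves a sparse leftover $R$ spread over all of $V(G)$; there are far too many such $C_{2k}$-divisible graphs for any absorber of bounded size to handle them all, and a spanning absorber cannot be built gadget-by-gadget against an unknown leftover. The actual proof is \emph{iterative} absorption: one fixes a vortex $U_0\supseteq U_1\supseteq\dots\supseteq U_\ell$ with $|U_\ell|$ constant, repeatedly covers all edges outside the current $U_i$ while only slightly damaging $G[U_{i+1}]$ (the ``cover-down'' lemmas, e.g.\ Lemma~\ref{lem:coverdown:bip}), so that the final leftover lives entirely inside the constant-size set $U_\ell$. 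Only then is a single absorber feasible, because there are only constantly many possible $C_{2k}$-divisible graphs on $U_\ell$, and one reserves a transformer-based absorber for each of them in advance (Lemmas~\ref{lem:abs:c2k} and~\ref{lem:abs:bip}). Without the vortex and cover-down mechanism, the step from ``sparse divisible leftover'' to ``absorbable leftover'' is a genuine gap.
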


In this paper we remove the linear error term from Theorem~\ref{thm:witheps} to obtain best possible minimum degree bounds for cycles of all even lengths except length six. We structure the proof into extremal cases where we construct the decompositions directly and non-extremal cases where the iterative absorption approach of \cite{mindeg} and \cite{stef} remains effective. In Proposition~\ref{prop:examples:c2k}, we give constructions which show that our bounds are best possible.

\begin{theorem}\label{thm:mainc2k}
Let $k\in \N$ with $k=2$ or $k\geq 4$. There is an $n_0$ such that every $C_{2k}$-divisible graph $G$ on $n\geq n_0$ vertices with 
\begin{equation*}
\delta(G)\geq \begin{cases}
	2n/3-1 & \text{if}\ k=2,\\
	n/2 & \text{if}\ k\geq 4
	\end{cases}
\end{equation*}
has a $C_{2k}$-decomposition.
\end{theorem}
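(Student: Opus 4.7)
The plan is to follow the strategy announced just before the statement: fix a hierarchy $0 < 1/n_0 \ll \eta \ll \varepsilon \ll 1$, and dichotomise according to whether $G$ is within edit distance $\eta n^2$ of (a blow-up of) one of the extremal constructions in Proposition~\ref{prop:examples:c2k}. If $G$ is not close to any extremal example then I will argue that the iterative absorption machinery of \cite{mindeg} and \cite{stef} underlying Theorem~\ref{thm:witheps}, which only loses $\varepsilon n$ in the minimum degree assumption, can be pushed down to the exact threshold $\delta_k n$. If $G$ is close to extremal then I will exploit the tight structural control to construct the decomposition essentially by hand, reducing at the end to a near-bipartite or near-tripartite sub-instance.

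In the non-extremal case, the heuristic is that the $\varepsilon n$ slack in Theorem~\ref{thm:witheps} is used to buy expansion, so once expansion is assumed directly the slack can be dropped. Concretely, I would prove two asymptotic statements — the two by-products advertised in the abstract: first, that any $C_{2k}$-divisible graph with $\delta(G)\ge \delta_k n$ plus a Haxell-type expansion condition on small vertex sets has a $C_{2k}$-decomposition; and secondly, that an analogous statement holds when $G$ is (almost) balanced bipartite under a suitable near-matching condition. The non-extremal hypothesis on $G$ then yields enough expansion to apply one of these results. Each asymptotic statement is obtained by re-running the proof of \cite{mindeg}, substituting the $\varepsilon n$ slack for the expansion hypothesis at the places (nibble, vortex construction, absorber gadget, cover-down) where it is used.

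In the extremal case, stability forces an approximate partition $V(G) = V_1 \cupdot \cdots \cupdot V_r$ matching the extremal template: $r=3$ with $|V_i| \approx n/3$ when $k=2$, and $r=2$ with $|V_i| \approx n/2$ when $k\ge 4$. I would then proceed in three stages. First, clean up atypical vertices by reassigning them between parts using short configurations of $C_{2k}$'s, so that the resulting parts look combinatorially standard. Secondly, use a small pre-selected reservoir of edge-disjoint $C_{2k}$'s to absorb the $O(\eta n^2)$ edges sitting in "wrong" positions relative to the template and to correct part-level edge counts modulo $2k$. Finally, apply the bipartite/expander asymptotic result from the previous step to decompose the leftover graph, which by construction is genuinely bipartite (for $k\ge 4$) or nearly tripartite (for $k=2$).

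The main obstacle I expect is the extremal absorption step. Each absorbing gadget must simultaneously correct vertex degree parities, part-level edge counts modulo $2k$, and any remaining atypical edges, while itself being a disjoint union of $C_{2k}$'s; the extremal rigidity (degrees essentially saturated at $\delta_k n$) leaves very little room to reroute cycles. The arithmetic of cross-part edge counts is also the likely reason for excluding $k=3$: with a bipartite template, edge-count corrections modulo the cycle length require cycles whose distribution of edges between the two parts can be prescribed with enough flexibility, and $C_6$ lacks the combinations available to $C_4$ and $C_{2k}$ for $k \ge 4$. A secondary obstacle is careful bookkeeping on the non-extremal side: the iterative absorption argument consumes the $\varepsilon n$ slack at several independent stages, so each must be re-audited to verify that the chosen expansion hypothesis can take over.
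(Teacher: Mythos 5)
Your plan matches the paper's proof essentially step for step: the dichotomy into non-extremal cases (handled by an expander version, Theorem~\ref{thm:nuexpander}, and a bipartite version, Theorem~\ref{thm:bipartiteversion}, of the asymptotic result) and extremal cases (two-part templates for $k\ge 4$ via Lemmas~\ref{lem:closeclique} and~\ref{lem:closebip}, a three-part template for $k=2$ via Lemma~\ref{lem:t1ort2}, each cleaned up and corrected for divisibility before invoking the asymptotic results), with the absorber being the genuine bottleneck exactly as you predict. The only small discrepancy is your diagnosis of why $k=3$ is excluded: the paper attributes it to the impossibility of building a $C_6$-transformer from internally disjoint paths of length at least three inside a $\nu$-expander, rather than to modular edge-count arithmetic.
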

It is an open problem to determine the exact minimum degree guaranteeing a $C_6$-decomposition, this is discussed in more detail in Section~\ref{sec:conclud}.

Along the way to proving Theorem~\ref{thm:mainc2k}, we also obtain a bipartite version of Theorem~\ref{thm:witheps} which is stated as Theorem~\ref{thm:bipartiteversion} below.
If $G$ is a bipartite graph with vertex classes $A$ and $B$, we introduce the following variant on the minimum degree. Given $0\leq \delta\leq 1$, we will write $\dbip(G)\geq \delta$ if, for each $v\in A$, $d_G(v)\geq \delta|B|$ and for each $v\in B$, $d_G(v)\geq \delta|A|$. This definition is convenient when the bipartite graph is not balanced. Cavenagh~\cite{cav} already studied $C_4$-decompositions and proved a bound of $\dbip(G)\geq 95/96$ ensures a $C_4$-decomposition. Theorem~\ref{thm:bipartiteversion} is asymptotically best possible, see Proposition~\ref{prop:examples:c2k:bip}.

\begin{theorem}\label{thm:bipartiteversion}
Let $k\in \N$ with $k\geq 2$. For each $\eps>0$, there is an $n_0$ such that every $C_{2k}$-divisible bipartite graph $G=(A,B)$ with $n_0\leq |A|\leq |B|\leq 2|A|$%
\COMMENT{the ``$2$" could be replaced by any number but it's good enough here.}
and $\dbip(G)\geq \delta_k+\eps$ has a $C_{2k}$-decomposition.
\end{theorem}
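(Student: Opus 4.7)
The plan is to adapt the iterative absorption framework of Barber, K\"uhn, Lo and Osthus~\cite{mindeg} (and the related approach of~\cite{stef}) to the bipartite setting. Fix constants $0 < \eps'' \ll \eps' \ll \eps$ and suppose $G = (A,B)$ satisfies the hypotheses of Theorem~\ref{thm:bipartiteversion}. The strategy has three ingredients: a \emph{bipartite fractional $C_{2k}$-decomposition result} at threshold $\delta_k + \eps'$, an \emph{iterative nibble/cover-down} procedure that uses it to eat up almost all edges, and an \emph{absorber} that swallows whatever $C_{2k}$-divisible leftover remains.

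First I would establish a fractional bipartite analogue: every bipartite $G$ with $n_0 \leq |A| \leq |B| \leq 2|A|$ and $\dbip(G) \geq \delta_k + \eps'$ admits a fractional $C_{2k}$-decomposition. For $k \geq 3$ this should follow by specialising the bounds of~\cite{bklmo} (or~\cite{dross}) to bipartite host graphs, since their proofs are based on local modifications that preserve bipartiteness. For $k=2$ one instead uses that $\dbip(G) \geq 2/3 + \eps'$ forces a large robust family of $4$-cycles through every edge, which yields the fractional decomposition via a short averaging / linear-algebraic argument.

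Next, build a descending chain $A = A_0 \supseteq A_1 \supseteq \dots \supseteq A_\ell$ in $A$ and $B = B_0 \supseteq B_1 \supseteq \dots \supseteq B_\ell$ in $B$, chosen randomly so that $\dbip(G[A_i, B_i]) \geq \delta_k + \eps'/2$ is preserved at each step, with $|A_\ell|, |B_\ell| = o(n)$. Reserve inside $G[A_\ell \cup B_\ell]$ a bipartite \emph{absorber} $A^*$: a subgraph such that $A^* \cup H$ has a $C_{2k}$-decomposition for every bipartite $C_{2k}$-divisible $H \subseteq G[A_\ell, B_\ell]$. Then, stage by stage, combine the fractional decomposition with a Pippenger--Spencer-type nibble (or the Riordan-style rounding used in~\cite{mindeg}) to remove edge-disjoint copies of $C_{2k}$ so that the remaining uncovered graph is supported on $A_i \cup B_i$ and remains $C_{2k}$-divisible. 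After $\ell$ stages one is left with a small $C_{2k}$-divisible bipartite graph on $A_\ell \cup B_\ell$, which is absorbed by $A^*$.

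The main obstacle will be the bipartite absorber construction together with the cover-down step in the unbalanced regime $|A| \leq |B| \leq 2|A|$. The absorber has to be robust to which side of the bipartition the odd/uneven contribution of $H$ lies on, so I would build it from small bipartite gadgets, each capable of flipping an arbitrary prescribed bipartite degree-parity pattern, and glue them as in~\cite{mindeg}. The cover-down step is delicate because vertices of $A$ can lose many $B$-neighbours when we restrict from $B_{i-1}$ to $B_i$; maintaining both $C_{2k}$-divisibility and $\dbip \geq \delta_k + \eps'/2$ on the uncovered bipartite graph after restriction will require an explicit parity/degree balancing step, of the kind where one removes a short list of $C_{2k}$-copies to correct parities on the discarded vertices before shrinking to the next level.
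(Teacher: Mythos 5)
Your overall architecture (a bipartite vortex, an absorber reserved near the bottom of the vortex, and an iterative cover-down) is exactly the shape of the paper's argument, and your identification of the two delicate points (the bipartite absorber and maintaining $\dbip$ and divisibility through the shrinking steps) is accurate. However, your first ingredient contains a genuine gap. You propose to drive the approximate decomposition by first proving a \emph{fractional} bipartite $C_{2k}$-decomposition theorem at threshold $\delta_k+\eps'$ and then applying a Pippenger--Spencer-type nibble. The justification you give for that fractional theorem does not hold up: the results of \cite{bklmo} and \cite{dross} concern fractional \emph{clique} (respectively triangle) decompositions of non-bipartite hosts, and they do not specialise to fractional $C_{2k}$-decompositions of bipartite graphs; for $k=2$, the observation that every edge lies in many $4$-cycles does not by itself yield a fractional decomposition (non-uniformity of the edge--copy incidence counts is precisely the obstruction that makes fractional decomposition theorems hard). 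So as written, your proof rests on an unproved and nontrivial input.

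The good news is that this entire ingredient is unnecessary, and this is exactly the shortcut the paper exploits: since $C_{2k}$ is bipartite, the Erd\H{o}s--Stone theorem guarantees a copy of $C_{2k}$ in any graph with $\Omega(n^2)$ edges, so one obtains an $\eta$-approximate $C_{2k}$-decomposition \emph{greedily}, with no fractional relaxation and no nibble. The paper then upgrades this to an approximate decomposition with bounded maximum leftover degree (Lemmas~\ref{lem:careofbad:bip} and~\ref{lem:boundmaxdegree:bip}, using a $K_t$-decomposition of $K_s$ to split $G$ into well-behaved pieces), and the real technical work sits in the cover-down step (Lemma~\ref{lem:coverdown:bip} together with Proposition~\ref{prop:cover-sparse-graph:bip}), which you only gesture at. One further caution on the absorber: gadgets that ``flip degree-parity patterns'' are not sufficient, since the leftover $H$ on the final vortex set is an arbitrary $C_{2k}$-divisible graph, not determined by its degree parities; the paper's Lemma~\ref{lem:abs:bip} instead enumerates all possible $C_{2k}$-divisible graphs on $U_\ell$ and builds a transformer-based absorber for each, taking care that the transformer admits a bipartition compatible with $(A,B)$. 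If you replace your fractional-plus-nibble step by the greedy Erd\H{o}s--Stone argument and flesh out the cover-down and absorber details along these lines, your outline becomes the paper's proof.
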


\subsection{Extremal graphs}
In this section we provide extremal constructions which show that Theorem~\ref{thm:mainc2k} is best possible and Theorem~\ref{thm:bipartiteversion} is asymptotically so.

\begin{prop}\label{prop:examples:c2k}
\begin{enumerate}[\rm(i)]
\item There are infinitely many $C_4$-divisible graphs $G$ with $\delta(G)\geq 2|G|/3-2$ and no $C_4$-decomposition.\label{extremalc4}%
\COMMENT{General $K_{s,s}$ construction in \cite{mindeg}.}
\item Let $k\in \N$, $k\geq 2$. There are infinitely many $C_{2k}$-divisible graphs $G$ with $\delta(G)\geq |G|/2-1$ and no $C_{2k}$-decomposition.\label{extremalc2k}
\end{enumerate}
\end{prop}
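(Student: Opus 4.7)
\medskip

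\noindent\textbf{Proof plan.}

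The two parts are handled by separate constructions. In each case, the plan is to exhibit an infinite family of $C_{2k}$-divisible graphs whose minimum degree falls one unit short of the threshold in Theorem~\ref{thm:mainc2k}, and to block the existence of a $C_{2k}$-decomposition by a divisibility or parity obstruction that persists even though $G$ itself is $C_{2k}$-divisible.

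\medskip

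For~\eqref{extremalc2k}, the idea is to exploit disconnection: I take $G$ to be the vertex-disjoint union of two copies of $K_m$, choosing $m$ so that $G$ is $C_{2k}$-divisible while neither component is. Concretely, set $m=2kj+1$ for any positive odd integer $j$. Then $|G|=2m$, $\delta(G)=m-1=2kj=|G|/2-1$, and every vertex has even degree. Moreover, $e(G)=m(m-1)=2kj(2kj+1)$ is divisible by $2k$, so $G$ is $C_{2k}$-divisible. However, $e(K_m)=m(m-1)/2=kj(2kj+1)$, which is $k$ times the odd integer $j(2kj+1)$, hence not divisible by $2k$; thus neither component of $G$ is $C_{2k}$-divisible. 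Any $C_{2k}$-decomposition of $G$ would restrict to a $C_{2k}$-decomposition of each component, which is impossible. Letting $j$ range over the positive odd integers yields infinitely many such $G$.

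\medskip

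For~\eqref{extremalc4}, the threshold $2|G|/3-1$ is too large for a two-component construction to work, so I adopt the ``general $K_{s,s}$ construction'' from~\cite{mindeg}: one builds $G$ from a dense tripartite blueprint with carefully added or removed edges so that $\delta(G)=2|G|/3-2$ and the divisibility conditions are met for infinitely many values of the parameter~$s$, while a $K_{s,s}$-shaped substructure forces a parity count that rules out any partition of $E(G)$ into four-cycles. The verification decomposes into: (a) a direct degree computation; (b) a $C_4$-divisibility check picking out the admissible values of~$s$; and (c) the obstruction argument, which typically isolates either an odd-size edge cut or a set of edges whose coverage by $C_4$'s is forced to have a parity incompatible with the given graph.

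\medskip

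The main obstacle is step~(c) of~\eqref{extremalc4}. In~\eqref{extremalc2k} the disconnection of $G$ is itself the obstruction, so the argument collapses to a divisibility calculation on a single component. In~\eqref{extremalc4}, on the other hand, a minimum degree of at least $2|G|/3-2$ forces $G$ to be highly connected, so the obstacle to decomposition must be more global — a parity or counting argument tailored to the specific $K_{s,s}$-construction. Once this is identified, the remainder of the proof is a finite arithmetic check.
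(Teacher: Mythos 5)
Your part~\eqref{extremalc2k} is correct and is essentially the paper's own construction in different clothing: taking $m=2kj+1$ with $j$ odd is the same as requiring $m\equiv 2k+1\pmod{4k}$, and the obstruction (each component of $K_m\cupdot K_m$ fails $2k\mid e(K_m)$ even though $G$ itself is $C_{2k}$-divisible) is exactly the one used in the paper. No issues there.

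Part~\eqref{extremalc4}, however, is not a proof but a placeholder. You name a ``general $K_{s,s}$ construction'' without specifying the graph, and you describe the obstruction only as something that ``typically isolates either an odd-size edge cut or a set of edges whose coverage by $C_4$'s is forced to have a parity incompatible with the given graph.'' Neither the construction nor the parity argument is actually supplied, and you concede as much when you call step (c) ``the main obstacle.'' The concrete content you are missing is the following. Take disjoint sets $A,B,C$ of sizes $4m+2$, $4m+3$, $4m-2$; make $A$ and $C$ cliques, make $B$ independent, and add all edges between $B$ and $A\cup C$ (and none between $A$ and $C$). Then every degree is $8m$ or $8m+4$, so $\delta(G)=8m=2|G|/3-2$ and all degrees are even, and one checks $e(G)=4(12m^2+5m+1)$, so $G$ is $C_4$-divisible. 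The obstruction: a $C_4$ crosses the cut $(A,\,V(G)\setminus A)$ an even number of times, and it cannot simultaneously contain an edge of $G[A]$ and an edge of $G[C]$ (that would force two vertices in each of $A$ and $C$ and hence an $A$--$C$ edge, which does not exist); hence every $C_4$ uses an even number of edges of $G[A]$. But $e(G[A])=\binom{4m+2}{2}=(2m+1)(4m+1)$ is odd, so no $C_4$-decomposition exists. Without exhibiting such a graph and such a parity count, part~\eqref{extremalc4} of your argument does not go through.
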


\begin{figure}[ht]
\centering
\includegraphics[scale=0.47]{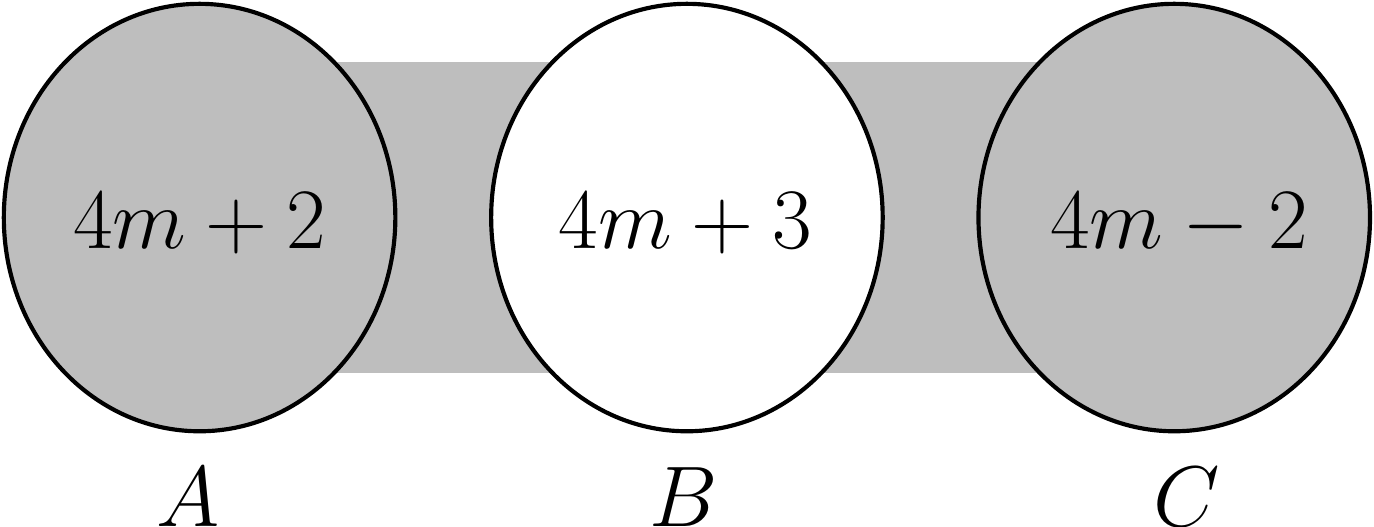}
\caption[The extremal graph for $C_4$, Proposition~\ref{prop:examples:c2k}\eqref{extremalc4}.]{The extremal graph for $C_4$, Proposition~\ref{prop:examples:c2k}\eqref{extremalc4}. All possible edges are present in the shaded regions.}\label{fig:extremalc4}
\end{figure}

\begin{proof}
We begin with \eqref{extremalc4}. Let $m\in \N$ and let $A,B,C$ be disjoint sets of vertices of sizes $4m+2, 4m+3, 4m-2$ respectively. Form a graph $G$ which has vertex set $A\cup B\cup C$. The edge set of $G$ is such that $A$ and $C$ form cliques and $G$ contains all possible edges between $A\cup C$ and $B$.
For each $v\in V(G)$, $d(v)\in \{8m+4, 8m\}$, so every vertex has even degree and $\delta(G)=8m=2|G|/3-2$. We also have
$$e(G)=\binom{4m+2}{2}+8m(4m+3)+\binom{4m-2}{2}=4(12m^2+5m+1).$$
So $G$ is $C_4$-divisible. 
Any copy of $C_4$ in $G$ must use an even number of edges from $G[A]$. But $e(A)=\binom{4m+2}{2}=(2m+1)(4m+1)$ is odd. Hence, $G$ does not have a $C_4$-decomposition.

For \eqref{extremalc2k}, let $n$ be such that $n\equiv 2k+1 \mod{4k}$ and let $G$ be the union of two vertex-disjoint copies of $K_n$. Every vertex in $G$ has degree $n-1=|G|/2-1$ which is even and $2k$ divides $e(G)=n(n-1)$. So $G$ is $C_{2k}$-divisible. But $G$ does not have a $C_{2k}$-decomposition since $2k$ does not divide $\binom{n}{2}$.
\end{proof}

\begin{prop}\label{prop:examples:c2k:bip}
\begin{enumerate}[\rm(i)]
\item There are infinitely many $C_4$-divisible bipartite graphs $G=(A,B)$ with $|A|=|B|$, $\delta(G)\geq 2|A|/3-2$ and no $C_4$-decomposition.\label{extremalc4:bip}
\item Let $k\in \N$, $k\geq 2$. There are infinitely many $C_{2k}$-divisible bipartite graphs $G=(A,B)$ with $|A|=|B|$, $\delta(G)\geq |A|/2 -1$ and no $C_{2k}$-decomposition.\label{extremalc2k:bip}
\end{enumerate}
\end{prop}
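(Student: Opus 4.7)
The plan is to exhibit, for both parts, explicit infinite families of bipartite graphs with the required properties, modelled on the non-bipartite constructions of Proposition~\ref{prop:examples:c2k}.

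For part (i), I would take a six-block bipartite graph in analogy with the three-part non-bipartite extremal: write $A = X_1 \cup Y_1 \cup Z_1$ and $B = X_2 \cup Y_2 \cup Z_2$, and let $G$ contain all edges of the bipartite cliques $K_{X_1,X_2}$ and $K_{Z_1,Z_2}$ together with the bridge bicliques $K_{X_1,Y_2}$, $K_{Y_1,X_2}$, $K_{Z_1,Y_2}$, $K_{Y_1,Z_2}$, and no others. I would choose the block sizes (parametrised to produce an infinite family) so that $|A|=|B|$, every vertex degree is even, the total edge count is divisible by $4$, and $|X_1|\cdot|X_2|$ is odd. The key step is the parity lemma that every $C_4$ in $G$ uses an even number of edges from $K_{X_1,X_2}$: for a $C_4$ on vertices $u_1, u_2 \in A$ and $v_1, v_2 \in B$, this count equals $|\{u_i \in X_1\}|\cdot|\{v_j \in X_2\}| \in \{0,1,2,4\}$, and a short case analysis of vertex types rules out the value $1$, since that would force the $C_4$ to use a nonexistent $X$-$Z$ edge. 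Because $|E(K_{X_1,X_2})|$ is odd, no $C_4$-decomposition can exist.

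For part (ii), I would take $G$ to be a vertex-disjoint union of two complete bipartite graphs, $G = K_{n,m} \cupdot K_{m,n}$, with $n, m$ even and $|n - m| \le 2$, so that $|A|=|B|=n+m$ and $\delta(G) = \min(n,m) \ge |A|/2 - 1$. Since any $C_{2k}$ in $G$ lies entirely within one component, a $C_{2k}$-decomposition of $G$ requires each component to be itself $C_{2k}$-divisible, i.e.\ $2k \mid nm$. I would select $(n,m)$ from an infinite family so that $2k \mid 2nm$ but $2k \nmid nm$, making $G$ divisible while each component is not. For values of $k$ where no such $(n,m)$ with $n, m$ both even exists, I would instead take each component to be $K_{n,n}$ minus a perfect matching with $n$ odd, giving $\delta = n-1 = |A|/2-1$ and component edge count $n(n-1)$, to which the same modular argument applies.

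The main obstacle is part (ii) for those exceptional values of $k$---essentially $k$ odd---where neither of the above constructions yields a divisibility mismatch between $G$ and its components. Resolving this case is likely to require a more intricate construction, such as a bipartite double cover of a non-bipartite extremal from Proposition~\ref{prop:examples:c2k}, or the addition of a small bridge between components that adjusts divisibility without violating the degree bound; the main work will be verifying that such a modification yields an infinite family for every $k \ge 2$.
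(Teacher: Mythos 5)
Your overall strategy (a blow-up of $C_6$ for part (i), a disjoint union of two complete-bipartite-type graphs for part (ii)) is the same as the paper's, but both parts have a genuine gap. In part (i), the block sizes you ask for do not exist: for all degrees in the blown-up $C_6$ to be even you need alternate classes to have equal parities, and requiring $|X_1|\cdot|X_2|$ odd then forces all six class sizes to be odd, so all six biclique edge counts are odd and $e(G)\equiv 6\equiv 2\pmod 4$. Hence you cannot simultaneously have even degrees, $4\mid e(G)$ and $|X_1|\cdot|X_2|$ odd in a pure blow-up. The paper's fix is to take all classes of size $2m+1$ and then delete one copy of $C_6$ from a biclique other than $K_{X_1,X_2}$: this preserves even degrees, drops $e(G)$ from $2$ to $0\pmod 4$, keeps $e(X_1,X_2)=(2m+1)^2$ odd, and only reduces the minimum degree to $2|A|/3-2$.

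In part (ii) the obstacle you flag at the end is not a corner case but exactly half of the problem, and neither of your proposed rescues works. For $k$ odd, $nm\equiv k\pmod{2k}$ forces $nm$ odd, which is incompatible with $n,m$ even (needed for even degrees in $K_{n,m}$); and $n(n-1)\equiv k\pmod{2k}$ is impossible since the left side is even and $k$ is odd. The bipartite double cover of the paper's non-bipartite example is two copies of $K_{n,n}^-$ with $n\equiv 2k+1\pmod{4k}$, whose components have $n(n-1)\equiv 2k(2k+1)\equiv 0\pmod{2k}$ edges, so the divisibility obstruction disappears; and adding a bridge edge between components destroys even degrees. The paper's solution for $k$ odd is to take two components of \emph{different} types, $K_{2m+1,2m+1}^-$ and $K_{2m,2m}$ with $4m\equiv k-1\pmod{2k}$: all degrees equal $2m$, the total edge count $2m(4m+1)\equiv 2mk\equiv 0\pmod{2k}$, but the two components' edge counts differ by $2m\not\equiv 0\pmod{2k}$, so they cannot both be divisible by $2k$ and no decomposition exists. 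Your even-$k$ fallback (two copies of $K_{n,n}^-$ with $n\equiv k+1\pmod{2k}$) does match the paper, but as written the proposal does not prove part (ii) for any odd $k$, including $k=3,5,\dots$.
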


\begin{proof}
First, we prove \eqref{extremalc4:bip}. Let $m\in \N$. Start with independent sets $V_1, \dots, V_6$ each of size $2m+1$ and add all edges between $V_i$ and $V_{i+1}$ for each $1\leq i\leq 6$ (consider indices modulo $6$).%
\COMMENT{blow-up of $C_6$}
Remove one copy of $C_6$ between $V_5$ and $V_6$ and let $G$ denote the resulting graph. Then $G$ is bipartite with vertex classes $A:=V_1\cup V_3\cup V_5$ and $B:=V_2\cup V_4\cup V_6$ of size $6m+3$. The degree of each vertex in $G$ is either $4m+2$ or $4m$, both of which are even, and $\delta(G)=4m=2|A|/3-2$. The number of edges in $G$ is $6(2m+1)^2-6=24m(m+1)$. So $G$ is $C_4$-divisible. But $G$ does not have a $C_4$-decomposition. To see this, note that any copy of $C_4$ in $G$ must use an even number of edges between $V_1$ and $V_2$ but $e_G(V_1, V_2)=(2m+1)^2$ is odd.%
\COMMENT{indeed $e_G(V_i, V_{i+1})$ odd for all $i$.}

Now we consider \eqref{extremalc2k:bip}. For each $n\in N$, let $K_{n,n}^-$ denote the graph formed by removing a perfect matching from $K_{n,n}$. Suppose first that $k$ is even. Choose $m\in \N$ such that $m\equiv k+1 \mod 2k$. Let $G$ be the vertex-disjoint union of two copies of $K_{m,m}^-$. Then $G$ is a balanced bipartite graph with vertex classes of size $2m$. Each vertex in $G$ has degree $m-1 \equiv k\mod 2k$ which is even and
$$e(G)=2(m-1)m\equiv 2k(k+1)\equiv 0\mod 2k.$$
So $G$ is $C_{2k}$-divisible. But $G$ does not have a $C_{2k}$-decomposition because $$e(K_{m,m}^-)=(m-1)m \equiv k(k+1) \equiv  k\mod 2k.$$

Now we consider $k$ odd. Choose $m\in \N$ such that $4m\equiv k-1 \mod 2k$ (i.e., choose $m\equiv (k-1)/4 \mod 2k$ if $k\equiv 1\mod 4$ and $m\equiv (3k-1)/4 \mod 2k$ if $k\equiv 3\mod 4$).
Let $G$ be the vertex-disjoint union of $K_{2m+1, 2m+1}^-$ and $K_{2m, 2m}$, so that $G$ is a balanced bipartite graph with vertex classes of size $4m+1$. Note that each vertex in $G$ has degree $2m$ which is even and, since
$$e(G)=2m(4m+1)\equiv 2mk \equiv 0\mod 2k,$$
$G$ is $C_{2k}$-divisible. However, $2k$ does not divide%
\COMMENT{else $4m\equiv 0\not\equiv k-1 \mod 2k$.}
$$e(K_{2m+1, 2m+1}^-)-e(K_{2m, 2m})=2m,$$
so $K_{2m+1, 2m+1}^-$ and $K_{2m, 2m}$ (and hence also $G$) are not $C_{2k}$-decomposable.
\end{proof}

\subsection{Outline of the proof}

Our argument is based on an iterative absorption approach. This method was introduced in \cite{kko} and further developed in the context of $F$-decompositions in \cite{mindeg} and \cite{stef}. In our setting, the idea of iterative absorption is as follows. Let $U$ be a subset of $V(G)$ of constant size and let $U_0\supseteq U_1\supseteq \dots \supseteq U_\ell$ be a decreasing sequence of sets of vertices with $U_\ell:=U$. We use an iterative argument to cover almost all edges of $G$ by copies of $C_{2k}$. Here, it is to our advantage that $C_{2k}$ is bipartite since we can always greedily find an approximate decomposition of $G$ using the Erd\H{o}s-Stone theorem (this is not true for $F$-decompositions in general).
At the end of the $i^\text{th}$~iteration, we are left with a diminishing subgraph $H_i\subseteq G[U_i]$ until, eventually, all that remains is a small leftover $H\subseteq G[U]$.
But we have prepared for $H$ by removing an ``absorber" at the start of the process, a subgraph $A$ of $G$ with the property that $A\cup H$ has a $C_{2k}$-decomposition. This absorber must be able to deal with all possible leftover graphs in $G[U]$, but this is feasible since $U$ only has constant size. Thus we obtain a $C_{2k}$-decomposition of $G$.
So the proof of Theorem~\ref{thm:witheps} using iterative absorption relies on two parts:
\begin{enumerate}
\item $G$ contains an absorber and\label{hurdle1}
\item we can cover all edges in $G-G[U]$. \label{hurdle2}
\end{enumerate}
When we relax the minimum degree condition on $G$ to prove Theorem~\ref{thm:mainc2k}, one or both of these properties can become considerably more challenging to attain.

When the cycle has length at least eight, we need to show that a minimum degree of $|G|/2$ suffices to find a $C_{2k}$-decomposition. If $G$ satisfies a certain expansion property this guarantees many disjoint paths between any pair of vertices, which enables us to show that \eqref{hurdle1} and \eqref{hurdle2} still hold. If $G$ is not an expander, then $G$ has one of two well-defined extremal structures. Either $G$ resembles a complete bipartite graph or the disjoint union of two cliques. In either case, we can construct $C_{2k}$-decompositions directly. We first deal with any edges or vertices which are unusual in some way to leave behind disjoint graphs or bipartite graphs which have high minimum degree. These can be decomposed using the existing Theorem~\ref{thm:witheps} or the bipartite version, Theorem~\ref{thm:bipartiteversion}, (which is proved in Section~\ref{sec:bipartite}).

Cycles of length four are treated separately since in this case we require a higher minimum degree, namely $\delta(G)\geq 2|G|/3-1$. In fact, this minimum degree is sufficient (with room to spare) for \eqref{hurdle2} and it is only finding an absorber which causes any difficulty. We are able to show that any graph which does not contain an absorber will, as in the previous case, have a well-defined structure and we find a $C_4$-decomposition directly.

This paper is organised as follows. In Section~\ref{sec:even:notat}, we introduce the notation which will be used throughout. We construct absorbers in Section~\ref{sec:absorbers}. We prove Theorem~\ref{thm:mainc2k} for $k=2$ in Section~\ref{sec:c4} and for $k\geq 4$ in Section~\ref{sec:longercycle} (see Table~\ref{table:outline} for a guide).
As mentioned above, these proofs rely on decomposition results when the host graph $G$ is bipartite (see Theorem~\ref{thm:bipartiteversion}) and when $G$ is an expander  (see Theorem~\ref{thm:nuexpander}). These results are proved in Sections~\ref{sec:bipartite}~and~\ref{sec:expanderdecomp} respectively.

\newcolumntype{C}[1]{>{\centering\arraybackslash}p{#1}}
\newcolumntype{R}[1]{>{\raggedleft\arraybackslash}p{#1}}
\def\arraystretch{1.5}

\begin{table}[h]
\centering
	\begin{tabular}{R{2.8cm}|C{2.5cm}|C{4cm}}
		&$\mathbf{C_4}$  & $\mathbf{C_{8+}}$\\
	\hline
	\textbf{non-extremal}& Lemma~\ref{lem:notextrem}&Theorem~\ref{thm:nuexpander}\\
	\textbf{extremal}& Lemma~\ref{lem:t1ort2}&Lemmas~\ref{lem:closeclique}~and~\ref{lem:closebip}\\
	\end{tabular}
	\vspace{10pt}
	\caption{Components in the proof of Theorem~\ref{thm:mainc2k}.}\label{table:outline}
\end{table}

\section{Notation and tools}\label{sec:even:notat}

Let $G$ be a graph and let $\cP=\{U_1, \dots, U_k\}$ be a partition of $V(G)$. We write $G[U_1]$ for the subgraph of $G$ induced by $U_1$ and $G[U_1, U_2]$ for the bipartite subgraph of $G$ induced by the vertex classes $U_1$ and $U_2$. We write $G[\cP]:=G[U_1, \dots, U_k]$ for the $k$-partite subgraph of $G$ induced by the partition $\cP$. We say the partition $\cP$ is \emph{equitable} if its parts differ in size by at most one.

Let $U, V\subseteq V(G)$. We write $E_G(U):=E(G[U])$ and $e_G(U):=e(G[U])$. If $U$ and $V$ are disjoint, we let $E_G(U,V):=E(G[U,V])$ and $e_G(U,V):=e(G[U,V])$. For any $v\in V(G)$, $N_G(v,U):=N_G(v)\cap U$ and $d_G(v,U):=|N_G(v,U)|$.
Let $H$ be a graph. We write $G-H$ for the graph with vertex set $V(G)$ and edge set $E(G)\setminus E(H)$. We write $G\setminus H$ for the subgraph of $G$ induced by the vertex set $V(G)\setminus V(H)$. (Note that, in general, $G-H\neq G\setminus H$.)

Let $F$ and $G$ be graphs and let $\eta>0$. We say that a collection $\cF$ of edge-disjoint copies of $F$ in $G$ is an \emph{$\eta$-approximate $F$-decomposition of $G$} if $e(G-\bigcup \cF)\leq \eta |G|^2$. In this paper, the graph $F$ will always be bipartite, so we can greedily apply the Erd\H{o}s-Stone theorem to find an $\eta$-approximate $F$-decomposition of any large graph $G$.
We say that $G$ is \emph{$2$-divisible} if every vertex in $G$ has even degree.%
\COMMENT{eulerian without the connectedness}

We use hierarchies, for example $1/n\ll a\ll b<1$, where constants are chosen from right to left. The notation $a \ll b$ means that there exists an increasing function $f$ for which the result holds whenever $a \leq f(b)$. In order to simplify the presentation, we will not determine these functions explicitly.

Let $m,n,N\in \N$ with $m,n<N$. The \emph{hypergeometric distribution} with parameters $N$, $n$ and $m$ is the distribution of the random variable $X$ defined as follows. Let $S$ be a random subset of $\{1,2, \dots, N\}$ of size $n$ and let $X:=|S\cap \{1,2,\dots, m\}|$. We will frequently use the following simple form of Hoeffding's inequality.

\begin{lemma}[see {\cite[Remark 2.5 and Theorem 2.10]{JLR}}] \label{lem:chernoff}
Let $X\sim B(n,p)$ or let $X$ have a hypergeometric distribution with parameters $N,n,m$.
Then
$\pr(|X - \ex(X)| \geq t) \leq 2e^{-2t^2/n}$.
\end{lemma}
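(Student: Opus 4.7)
The plan is to handle the binomial case by the classical Chernoff--Hoeffding exponential moment method and then reduce the hypergeometric case to it via Hoeffding's observation that sampling without replacement is dominated by sampling with replacement in the convex order. In both settings the key technical ingredient is Hoeffding's lemma, which bounds the moment generating function of a centered bounded random variable in terms of its range.

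First I would handle the binomial case. Write $X = \sum_{i=1}^n X_i$ where the $X_i$ are i.i.d.\ $\mathrm{Bernoulli}(p)$, so each centered variable $X_i - p$ lies in $[-p, 1-p]$, an interval of length one. For $\lambda > 0$, Markov's inequality applied to $e^{\lambda(X - \ex X)}$ together with independence gives
$$\pr(X - \ex X \geq t) \leq e^{-\lambda t}\prod_{i=1}^n \ex\bigl[e^{\lambda(X_i - p)}\bigr] \leq e^{-\lambda t + n\lambda^2/8},$$
using Hoeffding's lemma $\ex[e^{\lambda(X_i - p)}] \leq e^{\lambda^2/8}$. Optimising with $\lambda = 4t/n$ yields $\pr(X - \ex X \geq t) \leq e^{-2t^2/n}$; the lower tail follows by applying the same argument to $n - X$, and a union bound produces the factor of $2$ in the stated bound.

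For the hypergeometric case I would invoke Hoeffding's reduction: if $X$ has the hypergeometric distribution with parameters $N, n, m$ and $Y \sim B(n, m/N)$, then $\ex[\phi(X)] \leq \ex[\phi(Y)]$ for every convex $\phi$. Applying this to $\phi(x) = e^{\lambda(x - \ex X)}$ (noting $\ex X = \ex Y = nm/N$) transfers the moment generating function bound from the binomial to the hypergeometric setting, after which the optimisation over $\lambda$ proceeds exactly as before. The main obstacle is verifying the convex-order domination: the cleanest route realises the hypergeometric sum as a conditional expectation of a binomial sum under an exchangeable coupling and then applies Jensen's inequality, though one can alternatively run a bounded-differences martingale argument on the Doob filtration of the sampling process. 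Since the resulting inequality is entirely classical, citing it as the authors do is the natural choice here.
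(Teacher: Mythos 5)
Your proof is correct and follows exactly the classical route that the cited source (Janson, \L uczak and Ruci\'nski, Remark~2.5 and Theorem~2.10) uses: the exponential moment method with Hoeffding's lemma for the binomial case, and Hoeffding's convex-order domination of sampling without replacement by sampling with replacement for the hypergeometric case. The paper itself offers no proof beyond this citation, so there is nothing further to compare.
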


\section{Absorbers}\label{sec:absorbers}

As described earlier, the main idea in the proof of the non-extremal cases of Theorem~\ref{thm:mainc2k} is to cover as many edges of $G$ as possible with copies of $C_{2k}$ using an iterative approach. Then, as long as only a small number of edges remain, we can ``absorb'' these using a special graph which was reserved at the start of the process.
Let $H$ and $H'$ be vertex-disjoint graphs. The graph $A$ is an \emph{$F$-absorber} for $H$ if both $A$ and $A \cup H$ have $F$-decompositions. 
An \emph{$(H,H')_F$-transformer} is a graph $T$ which is edge-disjoint from $H$ and $H'$ and is such that both $T\cup H$ and $T\cup H'$ have $F$-decompositions.
Note that if $H'$ has an $F$-decomposition, then $T\cup H'$ is an $F$-absorber for $H$. So we can use transformers to build an absorber.

The following fact follows directly from $H$ being Eulerian.

\begin{fact}
Let $H$ be any connected $2$-divisible graph%
\COMMENT{i.e., eulerian}
and let $C$ be a cycle of length $e(H)$. There is a graph homomorphism $\phi$ from $C$ to $H$ that is edge-bijective.%
\COMMENT{$H$ has an Euler tour, follow this tour, numbering the vertices every time you visit them - then you can match up the vertices with the cycle (whose vertices are labelled in order around the cycle). Provides a cleaner replacement for the loop graph (which avoids describing identifying vertices and having to make $H$ regular) - but does mean we need to make $H$ CONNECTED.}
\end{fact}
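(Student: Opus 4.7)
The plan is to derive this immediately from Euler's theorem. Since $H$ is connected and $2$-divisible, every vertex of $H$ has even degree, so Euler's theorem guarantees that $H$ admits an Eulerian circuit, that is, a closed walk $v_0 e_1 v_1 e_2 v_2 \cdots e_m v_m$ with $v_m = v_0$ that traverses every edge of $H$ exactly once; necessarily $m = e(H)$.

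Next, I would label the vertices of the cycle $C$ cyclically as $u_0, u_1, \ldots, u_{m-1}$ (with $u_m := u_0$) so that the edges of $C$ are precisely $u_{i-1}u_i$ for $1 \leq i \leq m$. Then define $\phi \colon V(C) \to V(H)$ by $\phi(u_i) := v_i$. For each $i$, the edge $u_{i-1}u_i$ of $C$ is sent by $\phi$ to the pair $v_{i-1}v_i$, which is the edge $e_i \in E(H)$. In particular $\phi(u_{i-1})$ and $\phi(u_i)$ are always adjacent in $H$, so $\phi$ is a graph homomorphism.

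For edge-bijectivity, the induced map on edges sends the $i$-th edge of $C$ to $e_i$, and since $e_1, \ldots, e_m$ is exactly the edge list of the Eulerian circuit, every edge of $H$ appears exactly once in the image. Thus the induced map $E(C) \to E(H)$ is a bijection, as required.

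The only mild point worth noting is that $\phi$ need not be injective on vertices: if $H$ has a vertex $v$ of degree $2d$, then $v$ occurs $d$ times among the $v_i$ and hence has $d$ preimages under $\phi$. This causes no issue, since a graph homomorphism is only required to map edges to edges. Consequently there is no real obstacle in the argument; the statement is essentially a restatement of the existence of an Euler tour, and the bulk of the \emph{write-up} is simply fixing notation so that the edge-bijectivity claim is transparent.
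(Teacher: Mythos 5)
Your proof is correct and is exactly the argument the paper intends: the Fact is stated without a formal proof, but the accompanying comment describes precisely this construction (take an Euler tour, which exists since $H$ is connected with all degrees even, and map the vertices of $C$ in cyclic order onto the vertex sequence of the tour). Your additional remarks on edge-bijectivity and non-injectivity on vertices are accurate and fill in the routine details the paper leaves implicit.
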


We will make use of the following graphs.
For any $i,k\in \N$, define $L(i,k)$ to be the graph consisting of $i$ copies of $C_{2k}$ with exactly one common vertex. For any graph $H$, we say that $H^\con$ is a \emph{$C_{2k}$-connector} for $H$ if:
\begin{itemize}
\item $H\cup H^\con$ is connected and
\item $H^\con$ has a $C_{2k}$-decomposition.
\end{itemize}
The following simple procedure finds a $C_{2k}$-connector for $H$. Suppose $H$ is not connected and choose vertices $u$ and $v$ which lie in separate components of $H$. Form a copy of $C_{2k}$ containing these vertices by adding two edge-disjoint paths of length $k$ between $u$ and $v$. If the resulting graph $H_1$ is not connected, repeat this process on $H_1$. Eventually, a connected graph $H'$ is obtained with $|H'|\leq (2k-1)|H|$.%
\COMMENT{$|H'|\leq |H|+(|H|-1)(2k-2)$}
The graph $H^\con:=H'-H$ is a $C_{2k}$-connector for $H$.

\subsection{Absorbers for long cycles}\label{sec:longerabs}

The following simple transformer construction suits our purpose. 
Let $H$ be a connected $2$-divisible graph and let $C=u_1u_2\dots u_h$ be a cycle of length $h:=e(H)$ which is vertex-disjoint from $H$. Let $\phi$ be a graph homomorphism from $C$ to $H$ that is edge-bijective. For each $1\leq i\leq h$, let $P_i$ be a path of length $k$ from $u_i$ to $\phi(u_i)$ and let $Q_i$ be a path of length $k-1$ from $u_{i+1}$ to $\phi(u_i)$ (we consider indices modulo $h$). Suppose that the paths $P_i$, $Q_i$ are internally disjoint and that they are edge-disjoint from $H$ and $C$. Note that for each $1\leq i \leq h$, $u_iu_{i+1}\cup P_i\cup Q_i$ and $\phi(u_iu_{i+1})\cup P_{i+1}\cup Q_i$ form copies of $C_{2k}$. So $T:=\bigcup_{i=1}^h (P_i\cup Q_i)$ is a $(C, H)_{C_{2k}}$-transformer and $|T|=2ke(H)$.

\begin{lemma}\label{lem:abs:c2k}
Let $k\in \N$, $k\geq 4$ and $1/n\ll 1/m'\ll 1/m\ll 1/k$.
Let $G$ be a graph on $n$ vertices and let $U\subseteq V(G)$ with $|U|=m$. Suppose that between any pair of vertices $x,y\in V(G)$ there are at least $m'$ internally disjoint paths of length $k-1$.%
\COMMENT{$\delta(G)\geq m'$ for free.}
Then $G$ contains a $C_{2k}$-divisible subgraph $A^*$ such that $|A^*|\leq 2^{m^2}$ and if $H$ is any $C_{2k}$-divisible graph on $U$ that is edge-disjoint from $A^*$ then $A^*\cup H$ has a $C_{2k}$-decomposition.%
\COMMENT{We need to apply this lemma to $G-G[U_i]$ for some suff. large $i$ - doesn't matter that $H$ isn't a subgraph of this graph.}
\end{lemma}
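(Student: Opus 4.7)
The plan is to build $A^*$ as a pairwise edge-disjoint union $A^*=\bigcup_H A_H$ indexed by all $C_{2k}$-divisible graphs $H$ on the vertex set $U$. Since $|U|=m$ there are at most $2^{\binom{m}{2}}\leq 2^{m^2/2}$ such $H$. Each $A_H$ will be a subgraph of $G-G[U]$ that is $C_{2k}$-decomposable and has the property that $A_H\cup H$ is also $C_{2k}$-decomposable. Granted this, $A^*$ is itself $C_{2k}$-decomposable (hence $C_{2k}$-divisible) and, for whichever actual $H$ one faces, $A^*\cup H$ has the decomposition $(A_H\cup H)\cup\bigcup_{H'\neq H}A_{H'}$.

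To build a single $A_H$, I would chain two applications of the transformer construction preceding this lemma through an intermediate cycle $C$. First, attach a $C_{2k}$-connector $H^{\con}$ (using the procedure recorded immediately before this subsection) so that $H^*:=H\cup H^{\con}$ is a connected $2$-divisible graph; write $h:=e(H^*)$, a multiple of $2k$. On fresh vertices of $G$, pick a cycle $C=u_1u_2\dots u_h$ of length $h$ and a connected $C_{2k}$-decomposable graph $H':=L(h/(2k),k)$ of size $e(H')=h$. Applying the paper's transformer construction to $C$ and $H^*$ (using an edge-bijective homomorphism $C\to H^*$ provided by an Eulerian tour of $H^*$) produces a $(C,H^*)_{C_{2k}}$-transformer $T_1$, and applying it to $C$ and $H'$ produces a $(C,H')_{C_{2k}}$-transformer $T_2$. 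Setting
\[
A_H:=H^{\con}\cup T_1\cup C\cup T_2\cup H',
\]
one sees that $A_H$ has the $C_{2k}$-decomposition $H^{\con}\cup(T_1\cup C)\cup(T_2\cup H')$ while $A_H\cup H$ has the $C_{2k}$-decomposition $(T_1\cup H^*)\cup(T_2\cup C)\cup H'$; in each case every bracketed piece is $C_{2k}$-decomposable by the transformer property or by construction.

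The principal obstacle is the greedy book-keeping. All the ingredients, namely the connector paths of $H^{\con}$, the cycle $C$, the graph $H'$, and the paths $P_i$ (length $k$) and $Q_i$ (length $k-1$) making up $T_1$ and $T_2$, must be internally vertex-disjoint where required, pairwise edge-disjoint, and disjoint from every previously built $A_{H'}$ and from $G[U]$. Each $A_H$ has $O(km^2)$ edges, so the cumulative forbidden set when beginning a new absorber has at most $2^{m^2/2}\cdot O(km^2)$ edges, vastly smaller than $m'$ by the hierarchy $1/m'\ll 1/m\ll 1/k$. Since the hypothesis provides at least $m'$ internally disjoint length-$(k-1)$ paths between any pair of vertices (and length-$k$ paths are obtained by extending along an edge to a neighbor of the target, of which there are at least $m'$), each required path can be chosen in turn while avoiding the forbidden set. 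The final vertex count is then bounded by $2^{\binom{m}{2}}\cdot O(km^2)\leq 2^{m^2}$ using $1/m\ll 1/k$.
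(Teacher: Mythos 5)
Your proposal is correct and follows essentially the same route as the paper's proof: enumerate the at most $2^{\binom{m}{2}}$ possible $C_{2k}$-divisible graphs on $U$, attach a connector, and chain two transformers through an intermediate cycle $C$ to the decomposable graph $L(h/2k,k)$, choosing all paths greedily via the many internally disjoint paths of length $k-1$ (and hence $k$). The decomposition bookkeeping and the final bound $|A^*|\leq 2^{m^2}$ match the paper's argument.
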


\begin{proof}
Let $H_1, \dots, H_p$ be an enumeration of all possible $C_{2k}$-divisible graphs on $U$ (note that $p\leq 2^{\binom{m}{2}}$). We will find an absorber for each $H_i$. For each $1\leq i\leq p$, find an edge-disjoint $C_{2k}$-connector $H_i^\con\subseteq G-G[U]$ using the procedure outlined above. Each $H_i':=H_i\cup H_i^\con$ is $C_{2k}$-divisible and $|H_i'|\leq (2k-1)m$.

For each $1\leq i\leq p$, let $h_i:=e(H_i')$, let $C^i$ be a cycle of length $h_i$ and let $J_i$ be a copy of the graph $L(h_i/2k,k)$, defined at the beginning of this section. Find copies of $C^i$ and $J_i$ in $G$ which are vertex-disjoint from each other and from the graphs $H_i'$.%
\COMMENT{easy to find using `many disjoint paths' property}
Find a $(H_i',C^i)_{C_{2k}}$-transformer $T_i$ and a $(C^i,J_i)_{C_{2k}}$-transformer $T_i'$ in $G$ (such that $T_i$ and $T_i'$ are edge-disjoint and avoid all edges fixed so far). It is easy to find these transformers using the construction described above since $G$ contains many internally disjoint paths of length $k-1$ (and hence $k$ also) between any pair of vertices. Then $T_i\cup C^i\cup T_i'\cup J_i$ is an absorber for $H_i'$. Hence $A_i:=H_i^\con\cup T_i\cup C^i\cup T_i'\cup J_i$ is an absorber for $H_i$.  Letting $A^*:=\bigcup_{i=1}^p A_i$ and noting $|A^*|\leq 4kh_ip\leq 2^{m^2}$ completes the proof.%
\COMMENT{$|A^*|\leq 4kh_ip\leq 2k(2km)^22^{\binom{m}{2}}\leq 2^{m^2}$}
\end{proof}

\subsection{$C_4$-absorbers}\label{sec:specialc4trans}

For cycles of length four, we will require the following alternative construction of a transformer.%
\COMMENT{simple construction no good as we need edges between $H$ and $C$.}
This is exactly the construction given in \cite{mindeg} and it is illustrated in Figure~\ref{fig:c4transformer}.

\begin{figure}[ht]
\centering
\includegraphics[scale=0.7]{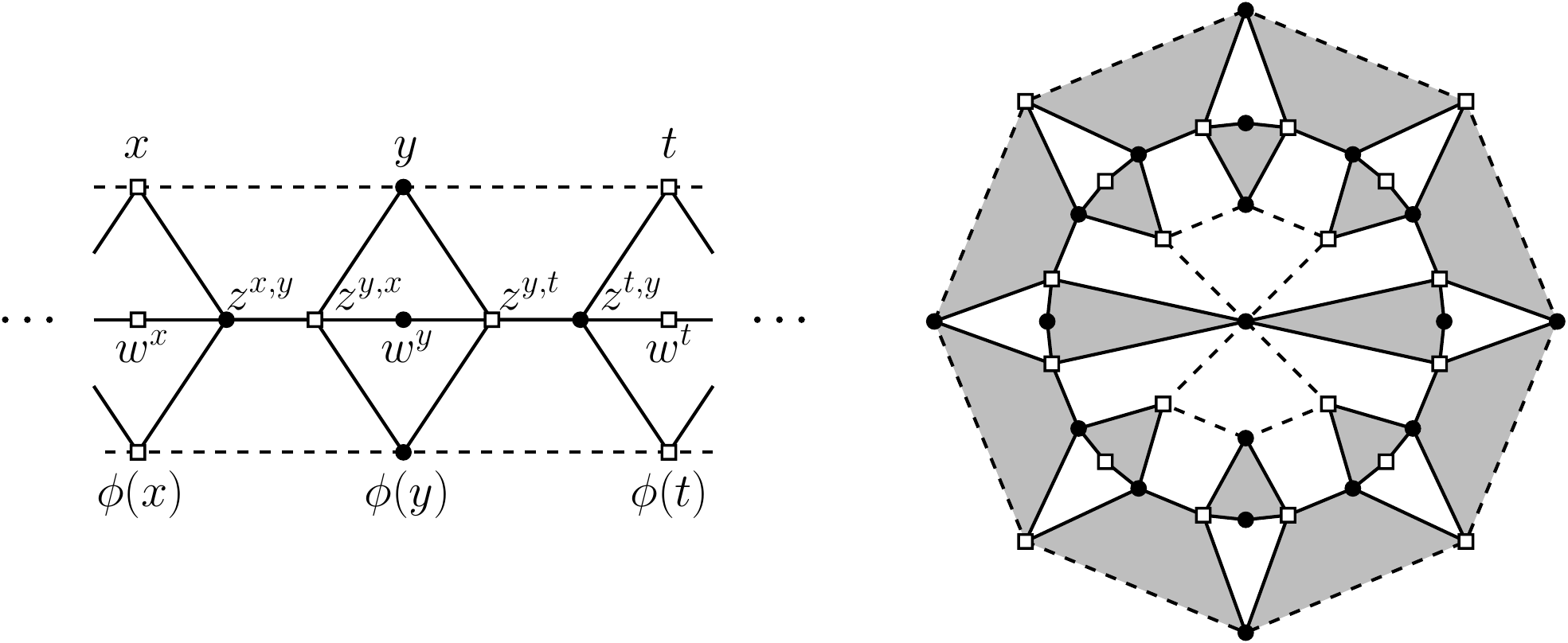}
\caption[The transformer construction for cycles of length four.]{The transformer construction for cycles of length four (left) and a $(C_8, L(2,2))_{C_4}$-transformer (right). The square/round vertices give a bipartition of the transformer which is used by Lemma~\ref{lem:abs:bip}.}\label{fig:c4transformer}
\end{figure}

Let $H$ be a connected, $C_4$-divisible graph and let $C$ be a cycle of length $e(H)$. Suppose $H$ and $C$ are vertex-disjoint. Let $\phi$ be an edge-bijective graph homomorphism from $C$ to $H$. For each $xy\in E(C)$, choose a set of vertices $Z^{xy}:=\{z^{x,y}, z^{y,x}\}$ and, for each $x\in V(C)$, choose a vertex $w^x$. Choose the vertices so that $V(H)$, $V(C)$, $Z^e$, $Z^{e'}$, $\{w^x\}$ and $\{w^{x'}\}$ are disjoint for all distinct $e, e'\in E(C)$ and all distinct $x, x'\in V(C)$. Let
\begin{itemize}
	\item $E_1:=\{xz^{x,y}, yz^{y,x}: xy\in E(C)\}$;
	\item $E_2:=\{z^{x,y}z^{y,x}: xy\in E(C)\}$;
	\item $E_3:=\{\phi(x)z^{x,y}, \phi(y)z^{y,x}: xy\in E(C)\}$;
	\item $E_4:=\{w^xz^{x,y}: xy\in E(C)\}$.
\end{itemize}
The transformer $T$ has $V(T):=V(H) \cup V(C)\cup \bigcup_{e\in E(C)}Z^e \cup \bigcup_{x\in V(C)} \{w^x\}$ and $E(T):=\bigcup_{i=1}^4E_i$. Note that $|T|\leq 5|C|= 5e(H)$. 
To see that $T$ is a $(C, H)_{C_4}$-transformer, it remains to verify that both $C\cup T$ and $H\cup T$ have $C_4$-decompositions (the details are given in Section~8 of \cite{mindeg}).

\subsection{Finding absorbers in a bipartite setting}

We must also be able to find absorbers when the host graph $G$ is bipartite.

\begin{lemma}\label{lem:abs:bip}
Let $k\in \N$, $k\geq 2$ and $1/n\ll 1/m'\ll 1/m\ll 1/k$.
Let $G=(A,B)$ be a bipartite graph with $|A|,|B|\geq n$ and let $U\subseteq V(G)$ with $|U|=m$.
Suppose that for each $v\in A$, $d(v)\geq \delta_k |B|+m'$ and, for each $v\in B$, $d(v)\geq \delta_k |A|+m'$.
Then $G$ contains a $C_{2k}$-divisible subgraph $A^*$ such that $|A^*|\leq 2^{m^2}$ and if $H$ is any $C_{2k}$-divisible graph on $U$ that is edge-disjoint from $A^*$ then $A^*\cup H$ has a $C_{2k}$-decomposition.%
\COMMENT{We need to apply this lemma to $G-G[U_i]$ for some suff. large $i$ - doesn't matter that $H$ isn't a subgraph of this graph.}
\end{lemma}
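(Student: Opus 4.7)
The strategy is to reproduce the proof of Lemma~\ref{lem:abs:c2k}, with every auxiliary structure embedded so as to respect the bipartition $(A,B)$ of $G$. I would first enumerate all bipartite $C_{2k}$-divisible graphs $H_1,\dots,H_p$ on $U$ (with the bipartition inherited from $G$); there are at most $p\leq 2^{m^2/4}$ of them. For each $H_i$ I would greedily and edge-disjointly build an absorber $A_i:=H_i^{\con}\cup T_i\cup C^i\cup T_i'\cup J_i\subseteq G$, exactly as in the non-bipartite case, and set $A^*:=\bigcup_{i=1}^p A_i$; the bound $|A^*|\leq 2^{m^2}$ then follows from the same arithmetic as in Lemma~\ref{lem:abs:c2k}.

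The bipartite setting forces two modifications to the constituent pieces. Building the $C_{2k}$-connector $H_i^{\con}\subseteq G-G[U]$ requires routing a $C_{2k}$ through specified vertex pairs $u,v$, i.e., finding two internally disjoint $u$--$v$ paths whose lengths sum to $2k$; if $u,v$ lie on the same side of $(A,B)$ both lengths must be even, and if on opposite sides both must be odd. The hypothesis $d(v)\geq\delta_k|B|+m'$ for $v\in A$ (resp.\ $\delta_k|A|+m'$ for $v\in B$) implies that any two same-side vertices share at least $2m'$ common neighbours across the bipartition, and a short counting argument then yields $\gg m$ internally disjoint paths of length $3$ between any two opposite-side vertices; concatenating such length-$2$ and length-$3$ hops produces internally disjoint paths of every prescribed length and parity. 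The auxiliary cycle $C^i$ and the copy $J_i$ of $L(e(H_i')/2k,k)$ are themselves bipartite, and can be embedded in $G$ compatibly with $(A,B)$ by the same greedy argument.

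For the transformers $T_i$ and $T_i'$ when $k\geq 3$, I would use the construction of Section~\ref{sec:longerabs}, choosing $\phi$ to come from an Euler tour of the bipartite graph $H_i'$ so that $\phi$ preserves the bipartition. Then $u_i$ and $\phi(u_i)$ lie on the same side while $u_{i+1}$ and $\phi(u_i)$ lie on opposite sides, so I would replace the original lengths $|P_i|=k$ and $|Q_i|=k-1$ by an even/odd pair summing to $2k-1$; both parities are available because $2k-1$ is odd, and for example $(|P_i|,|Q_i|)=(2,2k-3)$ works. For $k=2$ the simple construction is unavailable (as observed in Section~\ref{sec:specialc4trans}), but the dedicated $C_4$-transformer constructed there is already bipartite, with the square/round partition highlighted in Figure~\ref{fig:c4transformer}; placing the square vertices into one part of $(A,B)$ and the round vertices into the other realises it as a bipartite subgraph of $G$.

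The main obstacle is bookkeeping: at each stage of the greedy construction one must embed the new auxiliary vertices and paths in $G$ while respecting the bipartition, avoiding $G[U]$, and remaining edge-disjoint from the absorbers of previously handled $H_j$'s. This is routine because the total number of vertices and edges used across all $A_i$ is at most $2^{O(m^2)}$, whereas the minimum-degree hypothesis leaves $\Omega(n)$ choices for each new path, and $n$ is enormous compared to these quantities.
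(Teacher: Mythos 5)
Your proposal is correct and follows essentially the route of the paper's (deliberately brief) proof: the paper likewise reduces to the argument of Lemma~\ref{lem:abs:c2k}, observing that the transformers of Sections~\ref{sec:longerabs} and~\ref{sec:specialc4trans} admit bipartitions compatible with $(A,B)$ and that the required connectors, cycles $C^i$, flowers $J_i$ and paths can be embedded greedily using the common-neighbourhood bounds coming from $\dbip(G)\geq \delta_k+m'/|B|$. The one point where you genuinely diverge is the parity fix in the general transformer when $k$ is odd: the paper keeps the path lengths $k$ and $k-1$ and instead shifts the edge-bijective homomorphism $\phi$ by one around the cycle, so that $u_i$ and $\phi(u_i)$ land in \emph{opposite} classes and the original lengths have the right parities; you keep $\phi$ class-preserving and change the lengths to $(2,2k-3)$. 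Both work (your cycles $u_iu_{i+1}\cup P_i\cup Q_i$ and $\phi(u_iu_{i+1})\cup P_{i+1}\cup Q_i$ still have length $2k$), and your variant has the mild advantage of only needing common neighbours plus length-$3$ hops. Two small imprecisions, neither fatal: (i) for $k=2$ the greedy embedding of the vertices $z^{x,y}$ requires a common neighbour of \emph{three} previously embedded vertices on one side, and the hypothesis only guarantees $3m'$ such vertices rather than $\Omega(n)$ — this still suffices because $m'$ dwarfs $|A^*|\leq 2^{m^2}$, but the ``$\Omega(n)$ choices'' claim in your final paragraph is not literally true at those steps; (ii) in the connector for $k=2$, two internally disjoint odd paths summing to $4$ would force lengths $1$ and $3$, i.e.\ the edge $uv$ itself, which need not exist, so the component representatives $u,v$ should always be chosen on the same side of $(A,B)$ (possible whenever both components contain an edge, and edgeless components are irrelevant to the edge-bijective homomorphism). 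Your restriction of the enumeration to bipartite graphs on $U$ respecting $(A,B)$ is the right reading of the lemma and is all that the application to Theorem~\ref{thm:bipartiteversion} requires.
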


The proof is very similar to that of Lemma~\ref{lem:abs:c2k} so we omit the details and restrict ourselves to the following outline. For $k\geq 3$ we find transformers using the construction given in Section~\ref{sec:longerabs} and for $C_4$ we use the construction described in Section~\ref{sec:specialc4trans}. The following observations allow us to find absorbers:
\begin{itemize}
\item Given a connected, $2$-divisible graph $H$ and a vertex-disjoint cycle $C$ of length $e(H)$ on $(A,B)$, there is a bipartition of the $(C,H)_{C_{2k}}$-transformer which respects the bipartitions of $V(H)$ and $V(C)$ (with a suitable choice of the graph homomorphism $\phi$).%
\COMMENT{e.g. in the $C_4$ construction, want $x$ and $\phi(x)$ in the same vertex class. In the general construction, $x$ and $\phi(x)$ in the same class if and only if $k$ is even. No problem, just shift around the cycle by one if needs be.}
An example for cycles of length four is given in Figure~\ref{fig:c4transformer}.

\item For $k\geq 3$, $(C,H)_{C_{2k}}$-transformers are constructed from a collection of internally-disjoint paths of length $k$ or $k-1$ between vertices in $C$ and $H$. Since $\delta(G)\geq n/2+m'$, any two vertices in $A$ have at least $2m'$ common neighbours in $B$ and vice versa, so we can find the transformers greedily.

\item List the vertices of the $(C,H)_{C_{4}}$-transformer described in Section~\ref{sec:specialc4trans} so that they appear in the following order: $V(C\cup H)$, $\bigcup_{e\in E(C)} Z^e$, $\bigcup_{x\in V(C)} \{w^x\}$. Each vertex in the transformer has at most three of its neighbours appearing before itself in this list. Since $\delta(G)\geq 2n/3+m'$, any three vertices in $A$ have at least $3m'$ common neighbours in $B$ and vice versa. So we can greedily embed the vertices of the transformer in this order.
\end{itemize}

\section{Cycles of length four}\label{sec:c4}

\subsection{Case distinction}

For cycles of length four, the $\eps n$ term in Theorem~\ref{thm:witheps} is required only to find the absorber in the proof. We show that a minimum degree of $2n/3-1$ suffices by observing that any such graph either contains an absorber or has one of two extremal structures pictured in Figure~\ref{fig:types} (both of which have $C_4$-decompositions).

\begin{figure}[ht]
\centering
\includegraphics[scale=0.45]{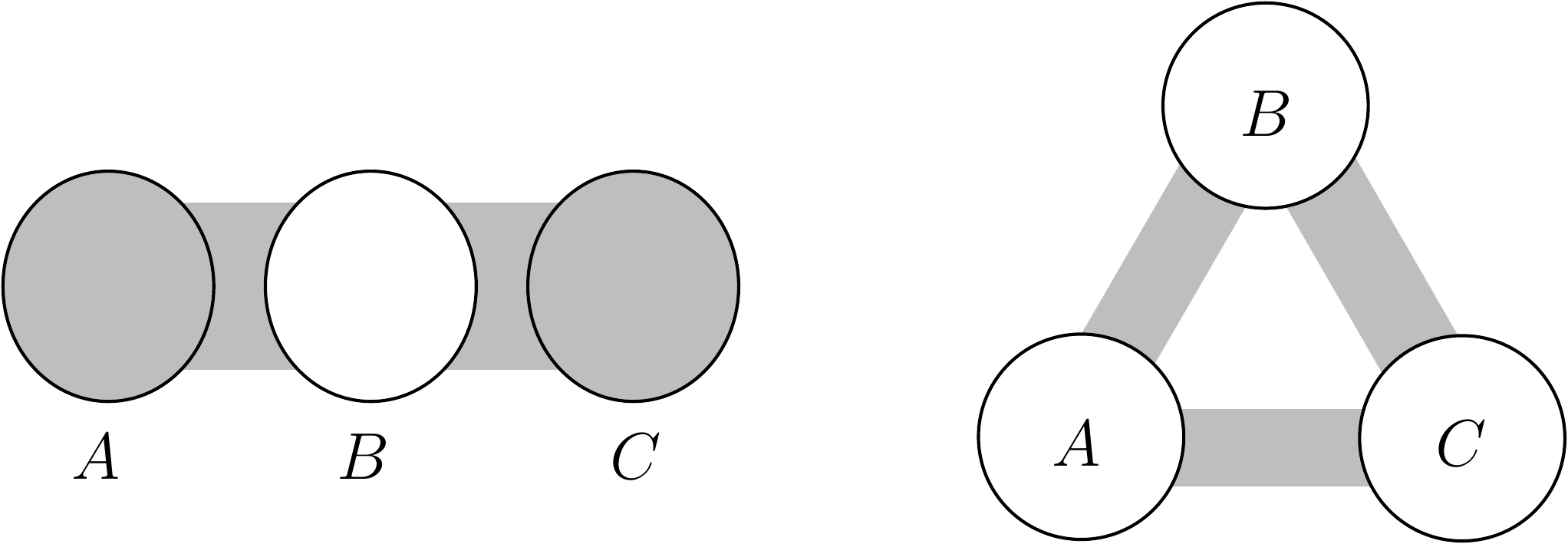}
\caption[Extremal graphs of type 1 and type 2.]{If $G$ is extremal and $\delta(G)\geq 2n/3-1$, then $G$ resembles the graph on the left (type 1) or the right (type 2). Here $|A|, |B|, |C|\sim n/3$ and shaded areas are dense.}\label{fig:types}
\end{figure}

We say that a graph $G$ on $n$ vertices is \emph{$m$-extremal} if there exist disjoint sets $S, T\subseteq V(G)$ such that $|S|, |T|\geq n/3-m$ which satisfy one of the following:
\begin{itemize}
\item $e(S,T)=0$; (\emph{Type 1})
\item $e(S)=e(T)=0$. (\emph{Type 2})
\end{itemize}
If $G$ is not close to being $m$-extremal, Lemma~\ref{lem:notextrem} finds a $C_4$-decomposition.

\begin{lemma}\label{lem:notextrem}
Let $n, m_1, m_2\in \N$ with
$1/n\ll 1/m_1\ll 1/m_2\ll 1$.
Let $G$ be a $C_4$-divisible graph on $n$ vertices with $\delta(G)\geq 2n/3-1$. Suppose that for every spanning%
\COMMENT{so the extremal definition uses the same `$n$'.}
subgraph $G'$ of $G$ such that $\delta(G')\geq 2n/3-m_2$, $G'$ is not $m_1$-extremal. Then $G$ has a $C_4$-decomposition.
\end{lemma}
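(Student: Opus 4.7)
The plan is to apply the iterative absorption machinery of \cite{mindeg} with the absorber step replaced by an argument that exploits the robust non-extremal hypothesis. Fix a vortex $V(G) = U_0 \supseteq U_1 \supseteq \cdots \supseteq U_\ell =: U$, where each $U_{i+1}$ is a random subset of $U_i$ of a fixed constant ratio in size, iterated until $|U|$ is a constant depending only on $m_1,m_2$. A routine application of Lemma~\ref{lem:chernoff} shows that, with positive probability, every $G[U_i]$ inherits the degree bound (approximately $(2/3)|U_i|$) and the robust non-extremal property descends to each $G[U_i]$ as well. We reserve a $C_4$-absorber $A^*\subseteq G$ with the property that $A^*\cup H$ has a $C_4$-decomposition for every $C_4$-divisible graph $H$ on $U$ edge-disjoint from $A^*$. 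The remainder of the proof then covers $E(G)\setminus E(A^*)\setminus E(G[U])$ by cycles so that whatever residue lands in $G[U]$ gets absorbed by $A^*$.

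\textbf{Building the absorber} is the only step where the exact threshold $\delta(G)\geq 2n/3-1$ is tight. For each of the at most $2^{\binom{|U|}{2}}$ possible $C_4$-divisible graphs $H_i$ on $U$, build an individual absorber via the construction of Section~\ref{sec:specialc4trans}: join $H_i$ to an auxiliary cycle $C^i$ of length $e(H_i)$ by one transformer and join $C^i$ to a copy of $L(e(H_i)/4,2)$ by another, then take the union over $i$. The transformer embedding proceeds greedily, placing each $z^{x,y}$ in $N(x)\cap N(\phi(x))$, each $z^{y,x}$ in $N(y)\cap N(\phi(y))\cap N(z^{x,y})$, and each $w^x$ in the common neighbourhood of the two previously placed $z$-vertices at $x$. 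The pair intersections automatically have size $\geq n/3-2$ from $\delta(G)\geq 2n/3-1$, but a triple intersection such as $N(y)\cap N(\phi(y))\cap N(z^{x,y})$ admits no \emph{a priori} lower bound. The key argument is that if this triple were smaller than $m_1$ for \emph{every} choice of $z^{x,y}\in N(x)\cap N(\phi(x))$, then the bipartite graph of $G$ between $N(x)\cap N(\phi(x))$ and $N(y)\cap N(\phi(y))$ would carry at most $m_1 n$ edges; deleting these edges yields a spanning subgraph $G'$ with $\delta(G')\geq 2n/3-m_2$ exhibiting a Type~1 configuration on two disjoint sets of size $\geq n/3-m_1$, contradicting the hypothesis. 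An analogous argument rules out failure at the $w^x$ step, so the embedding succeeds and $A^*$ exists; disjointness across the $2^{\binom{|U|}{2}}$ absorbers is maintained by a standard reservoir argument.

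\textbf{Covering and absorption.} The covering step is standard: at each level $i$, use the Erd\H{o}s--Stone theorem to obtain an $\eta$-approximate $C_4$-decomposition of $G[U_i]\setminus G[U_{i+1}]$, then clean up the $o(|U_i|^2)$ residue by $C_4$-covers that repeatedly use pair common neighbourhoods of size $\geq |U_i|/3 - O(1) \gg m_1$. The $\delta(G)\geq 2n/3-1$ bound has generous slack above the fractional $C_4$-decomposition threshold, so these covers exist without any appeal to the non-extremal assumption. After $\ell$ iterations the uncovered edges form a $C_4$-divisible graph $H\subseteq G[U]$ edge-disjoint from $A^*$, and by the defining property of $A^*$ we obtain a $C_4$-decomposition of $A^*\cup H$, completing the decomposition of $G$. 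The main obstacle throughout is the absorber step; once the robust non-extremal hypothesis is unpacked as above, everything else follows the template of \cite{mindeg}.
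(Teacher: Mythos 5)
Your overall architecture (vortex, absorber built from the transformers of Section~\ref{sec:specialc4trans}, cover-down, absorb) matches the paper's, and black-boxing the covering step is legitimate. The gap is in the absorber step, exactly where you invoke non-extremality. You argue that if every $z^{x,y}\in N(x)\cap N(\phi(x))$ has fewer than $m_1$ neighbours in $N(y)\cap N(\phi(y))$, then deleting the few edges between $X:=N(x)\cap N(\phi(x))$ and $Y:=N(y)\cap N(\phi(y))$ exhibits a Type~1 configuration ``on two disjoint sets''. But $X$ and $Y$ need not be disjoint, and both extremal types are defined only for disjoint sets; the paper flags precisely this point before the proof of Lemma~\ref{lem:abs:c4}. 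Non-extremality of $G$ is entirely consistent with overlapping $X,Y$ of size about $n/3$ satisfying $e_G(X,Y)=0$: any $v\in X\cap Y$ then has $N(v)\cap(X\cup Y)=\emptyset$, forcing $X\cap Y$ to be a single large independent set $X'$ --- a structure that is neither Type~1 nor Type~2. In that structure the transforming edge can genuinely fail for an arbitrarily placed auxiliary cycle: if $x,\phi(x),y,\phi(y)$ all lie in $Y':=V(G)\setminus X'$, the two $Y'$-neighbourhoods of $x$ and $\phi(x)$ (each of size about $n/3$ inside a set of size about $2n/3$) may be disjoint, so both common neighbourhoods sit inside the independent set $X'$ and span no edge. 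One can realise this in a non-extremal graph (e.g.\ $G[Y']$ a power of a cycle), so no contradiction is available and your greedy embedding breaks down.

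The paper's proof accordingly does not derive a contradiction here. It splits into Case~1 (every pair of sets of size $\geq n/3-3m_2$ spans an edge, where your argument essentially works) and Case~2 (some pair spans no edge), and in Case~2 it extracts the independent set $X'$, \emph{places} each auxiliary cycle $C^i$ inside the bipartite-like graph $G[X'',Y'']$ so that $x\in X''$ has almost all of $Y'$ in its neighbourhood, and then runs a four-way case analysis on the location of $\phi(x),\phi(y)$; non-extremality is used a second time only in the last sub-case, to rule out a Type~2 configuration with $S=X'$ and $T=Y_x\cap Y_y$. A secondary flaw: even for disjoint $X,Y$, ``every vertex of $X$ has fewer than $m_1$ neighbours in $Y$'' does not give $\delta(G')\geq 2n/3-m_2$ after deleting $E_G(X,Y)$, since a single vertex of $Y$ may lose $\Theta(n)$ edges. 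The absorber argument needs to be repaired along the paper's lines before the rest of your outline can be assembled into a proof.
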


If Lemma~\ref{lem:notextrem} does not apply, then $G$ has a subgraph $G'$ which is $m_1$-extremal and has $\delta(G')\geq 2n/3-m_2\geq 2n/3-m_1$. In this case, we use the following result.

\begin{lemma}\label{lem:t1ort2}
Let $n,m\in \N$ with
$1/n\ll 1/m\ll 1$.
Let $G$ be a $C_4$-divisible graph on $n$ vertices with $\delta(G)\geq 2n/3-1$. Suppose that there exists a spanning subgraph $G'$ of $G$ such that $\delta(G')\geq 2n/3-m$ and $G'$ is $m$-extremal of
\begin{inparaenum}[\rm(i)]
	\item type 1 or \label{type1}
	\item type 2. \label{type2}
\end{inparaenum}
Then $G$ has a $C_4$-decomposition.
\end{lemma}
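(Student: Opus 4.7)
My plan is to handle types~1 and~2 separately but via a common template: extract a refined tripartition $V(G) = A \cup B \cup C$ from the extremal partition witnessed by $G'$; cover a small collection of ``anomalous'' edges with edge-disjoint $C_4$'s; then decompose the remaining pieces using Theorem~\ref{thm:witheps} on the near-clique parts and Theorem~\ref{thm:bipartiteversion} on the near-complete-bipartite parts.

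For type~1, I first observe that $\delta(G') \geq 2n/3 - m$ together with $e_{G'}(S,T) = 0$ forces $|S|, |T| \in n/3 \pm O(m)$, so $M := V(G) \setminus (S \cup T)$ also has size $n/3 \pm O(m)$; moreover each $v \in S$ has at most $O(m)$ non-neighbors in $(S \setminus v) \cup M$ in $G'$ (and hence in $G$), and symmetrically for $v \in T$. By reassigning the $O(m_1)$ vertices whose $G$-neighborhoods are atypical for their current part---e.g., vertices in $S$ with too many $G$-edges into $T$, or vertices in $M$ whose $G$-neighborhood is heavily skewed toward one of $S$ or $T$---I aim to obtain a refined tripartition $V(G) = A \cup B \cup C$ with $|A|, |B|, |C| = n/3 \pm O(m_1)$ in which $G[A]$ and $G[B]$ each miss only $O(m_1)$ potential edges (so resemble cliques), $G[A, C]$ and $G[B, C]$ each miss only $O(m_1)$ potential edges (so resemble complete bipartite graphs), and the number of anomalous edges---those in $E_G(A,B) \cup E_G(C)$---is small. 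I then build a collection $\mathcal{F}$ of edge-disjoint $C_4$'s that covers the anomalies: each anomalous edge is placed into a $C_4$ that pairs it with a few good edges drawn from the dense bipartite regions $G[A,C]$ and $G[B,C]$, which is possible greedily because $\delta(G[A,C])$ and $\delta(G[B,C])$ are both close to maximal. After removing $\bigcup \mathcal{F}$, Theorem~\ref{thm:witheps} decomposes $G[A]$ and $G[B]$ (each on $\sim n/3$ vertices with min.\ degree $(1 - o(1))|A|$, well above the $(2/3 + \eps)|A|$ threshold), and Theorem~\ref{thm:bipartiteversion} decomposes the remaining bipartite graph $G[A \cup B, C]$ (whose parts sit in the permitted ratio and whose $\dbip$ is close to $1$).

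For type~2, the template is identical but with the roles flipped: $A$ and $B$ play the role of near-independent sets, and $G[A,B]$, $G[A,C]$, $G[B,C]$ are all near-complete bipartite. Here the anomalous edges to be cleaned up are those inside $A$ or $B$, and they can again be absorbed by $C_4$'s that cross into the dense bipartite parts. What remains is essentially tripartite and is decomposed by two or three applications of Theorem~\ref{thm:bipartiteversion}, with Theorem~\ref{thm:witheps} handling $G[C]$ if sufficiently dense.

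The main obstacle I anticipate is the cleanup step: the collection $\mathcal{F}$ must simultaneously (i) cover every anomalous edge exactly once, (ii) leave each target piece $C_4$-divisible (every vertex degree even, the total edge count divisible by~$4$), and (iii) not damage the min.\ degree of any piece enough to invalidate the hypotheses of Theorem~\ref{thm:witheps} or Theorem~\ref{thm:bipartiteversion}. I plan to build $\mathcal{F}$ in two stages---a greedy first pass that absorbs almost all anomalous edges, followed by a short second pass of $C_4$'s chosen specifically to correct any residual parity or mod-$4$ discrepancies across the pieces. The slack $\delta(G) - \delta(G') \geq m - 1$, together with the abundance of edges in the dense regions, should provide enough flexibility to execute this fine adjustment.
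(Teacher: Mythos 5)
Your overall strategy coincides with the paper's: both arguments extract the same refined tripartition (two near\nobreakdash-cliques joined through a middle class in type~1, a near-tripartite structure in type~2), cover the anomalous edges by $C_4$'s, repair divisibility, and finish with Theorem~\ref{thm:witheps} on the clique-like parts and Theorem~\ref{thm:bipartiteversion} on the bipartite parts. The problem is not the route but a specific step on it.

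The genuine gap is in your parity ``second pass'' for type~1. Write $A,B$ for your two near-cliques and $C$ for the middle class, and suppose the cleanup has succeeded, so that all surviving edges lie in $E(A)\cup E(B)\cup E(A\cup B,C)$. Then \emph{every} copy of $C_4$ in the surviving graph uses an even number of edges from $E(A)$: a $C_4$ meeting $A$ in exactly two adjacent vertices would have to close up through two further vertices joined by an edge of $E(A,B)\cup E(C)$, which is now empty, and all other intersection patterns contribute $0$, $2$ or $4$ edges of $E(A)$. Hence the parity of $e(A)$ is an invariant of the remaining problem, and no corrective $C_4$'s drawn from the surviving graph can change it; the only way to adjust it is to consume anomalous edges in cycles using exactly one edge of $E(A)$, which is precisely the assignment scheme of Proposition~\ref{prop:setup2}, driven by the congruence $e(A,B)+e(C)\equiv e(A)+e(B)\pmod 2$ (a consequence of $2$-divisibility). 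Worse, in the degenerate case $e(A,B)+e(C)=0$ there is nothing to consume, and one must instead prove that $C_4$-divisibility of $G$ forces $e(A)$ and $e(B)$ to be even; the paper does this by showing $n=3N+1$, that $G$ is then the exact extremal configuration, and that $4\mid e(G)=N(3N+1)$ forces $\binom{N}{2}$ to be even. Your proposal has no mechanism for either point, and without them $G[A]$ need not become $C_4$-divisible, so Theorem~\ref{thm:witheps} cannot be applied to it. Two secondary inaccuracies: the covering cycles for anomalous edges cannot be built solely from ``good edges drawn from the dense bipartite regions'' (any $C_4$ through an edge of $E(C)$ or $E(A,B)$ must use an edge inside a clique or a second anomalous edge, and this is exactly what makes the parity bookkeeping above unavoidable); and the middle class contains up to $O(m)$ vertices with only about $n/50$ guaranteed neighbours in each of $A$ and $B$, which cannot be ``reassigned'' and must instead have all their incident edges covered and be deleted, as in the paper.
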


So, together, Lemmas~\ref{lem:notextrem}~and~\ref{lem:t1ort2} imply Theorem~\ref{thm:mainc2k} when $k=2$.

\subsection{$G$ is not extremal}

In this section we prove Lemma~\ref{lem:notextrem}, which finds a $C_4$-decomposition of $G$ whenever $G$ is not extremal. 
Let $G$ be a graph on $n$ vertices. A \emph{$(\delta, \mu,m)$-vortex} in $G$ (as defined in \cite{stef}) is a sequence $U_0 \supseteq U_1 \supseteq \dots \supseteq U_\ell$ such that:
\begin{itemize}
\item $U_0=V(G)$;
\item $|U_i|=\lfloor \mu|U_{i-1}| \rfloor$, for all $1\leq i\leq \ell$, and $|U_\ell|=m$;
\item $d_G(x,U_i) \ge \delta |U_i|$, for all $1\leq i\leq \ell$ and all $x \in U_{i-1}$.
\end{itemize}
We use Lemma~4.3 from~\cite{stef} to find a vortex in $G$.

\begin{lemma}[\cite{stef}]\label{lem:vortex}
Let $0\leq \delta \leq 1$. For all $0<\mu<1$, there exists an $m_0=m_0(\mu)$ such that for all $m'\geq m_0$ the following holds. Whenever $G$ is a graph on $n\geq m'$ vertices with $\delta(G)\geq \delta n$, then $G$ has a $(\delta-\mu, \mu, m)$-vortex for some $\lfloor \mu m'\rfloor \leq m \leq m'$.
\end{lemma}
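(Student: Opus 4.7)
The plan is to construct the vortex by sampling one level at a time. Set $U_0:=V(G)$; given $U_{i-1}$ with $|U_{i-1}|>m'$, pick $U_i\subseteq U_{i-1}$ uniformly at random of size $\lfloor\mu|U_{i-1}|\rfloor$, and stop at the first index $\ell$ for which $|U_\ell|\le m'$. The stopping rule forces $|U_\ell|\ge\lfloor\mu m'\rfloor$. Conditional on $U_{i-1}$, for each $x\in U_{i-1}$ the random variable $d_G(x,U_i)$ is hypergeometric with mean $d_G(x,U_{i-1})|U_i|/|U_{i-1}|$, so Lemma~\ref{lem:chernoff} is the natural concentration tool.

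I would induct on $i$, maintaining the property that $d_G(x,U_i)\ge(\delta-\alpha_i)|U_i|$ for every $x\in U_{i-1}$, where $\alpha_0:=0$ and $\alpha_i:=\alpha_{i-1}+|U_i|^{-1/3}$. Applying Hoeffding with deviation $t_i:=|U_i|^{2/3}$, for fixed $x\in U_{i-1}$ the probability that $d_G(x,U_i)<\ex[d_G(x,U_i)\mid U_{i-1}]-|U_i|^{2/3}$ is at most $2e^{-2|U_i|^{1/3}}$. Off the union of such bad events, the inductive hypothesis at step $i-1$ (applied to $x\in U_{i-1}\subseteq U_{i-2}$) gives $\ex[d_G(x,U_i)\mid U_{i-1}]\ge(\delta-\alpha_{i-1})|U_i|$, and subtracting the deviation propagates the invariant. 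The total failure probability, summed over steps and vertices, is at most $\sum_{i=1}^\ell|U_{i-1}|\cdot 2e^{-2|U_i|^{1/3}}$; since the sizes $|U_i|$ form a geometric sequence, the terms of this sum grow toward $i=\ell$ and it is dominated by its last term, which is at most $(m'/\mu)\cdot 2e^{-(\mu m')^{1/3}}$ and hence below $1/2$ for $m'\ge m_0(\mu)$ large enough. Similarly, $\alpha_\ell=\sum_{i=1}^\ell|U_i|^{-1/3}$ is a geometric sum dominated by $|U_\ell|^{-1/3}$ times a constant depending on $\mu$, so $\alpha_\ell\le\mu$ for $m_0$ large. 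With positive probability, the sampled chain is therefore a $(\delta-\mu,\mu,m)$-vortex with $m=|U_\ell|\in[\lfloor\mu m'\rfloor,m']$.

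The main obstacle is calibrating the per-step deviation so that the threshold $m_0$ depends only on $\mu$ and not on $n$. The number of levels $\ell$ can be as large as $\Theta(\log(n/m'))$, so a naive constant-per-step slack of order $\mu/\ell$ would force $m_0$ to grow with $n$; the choice $t_i=|U_i|^{2/3}$ sidesteps this by making both the union bound and the cumulative slack collapse to geometric series whose tails depend only on $\mu$ and $m_0$.
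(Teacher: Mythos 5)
Your proposal is correct and matches the paper's approach: the cited proof (reproduced for the analogous Lemmas~\ref{lem:getvortex:bip} and~\ref{lem:getvortex} in the appendix) likewise builds the vortex by iterated random sampling, applies the hypergeometric Hoeffding bound of Lemma~\ref{lem:chernoff} with relative deviation of order $|U_i|^{-1/3}$ at each level, and sums the resulting geometric series of errors to get total loss at most $\mu$ with threshold depending only on $\mu$. The only cosmetic difference is that you take one global union bound over all levels, whereas the paper fixes a good $U_i$ level by level; both are valid.
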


The following result (taken from the more general statement for $F$-decompositions, Lemma~5.1 in~\cite{stef}) finds an approximate $C_4$-decomposition of $G$ leaving only a very small (and very restricted) leftover $H$.

\begin{lemma}[\cite{stef}]\label{lem:covermost}
Let $1/m\ll \mu$. Let $G$ be a $C_4$-divisible graph with $\delta(G)\geq (1/2+3\mu)|G|$ and let $U_0\supseteq U_1\supseteq \dots \supseteq U_\ell$ be a $(1/2+4\mu, \mu, m)$-vortex in $G$. Then there exists $H\subseteq G[U_\ell]$ such that $G-H$ is $C_4$-decomposable.
\end{lemma}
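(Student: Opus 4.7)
The plan is to process the vortex one level at a time, constructing a nested sequence of $C_4$-divisible subgraphs $G_0=G\supseteq G_1\supseteq\dots\supseteq G_\ell$ such that $G_i\subseteq G[U_i]$ and each difference $G_{i-1}-G_i$ is $C_4$-decomposable. At the end, $G-G_\ell$ is $C_4$-decomposable, so setting $H:=G_\ell\subseteq G[U_\ell]$ yields the lemma. The vortex definition gives $d_G(x,U_i)\geq (1/2+3\mu)|U_i|$ for every $x\in U_{i-1}$, which is exactly the density that keeps each $G_i$ amenable to the next iteration.

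The inductive step $G_i\to G_{i+1}$ has two parts. First, an \emph{approximate decomposition} of the ``outside'' edges (those of $G_i$ not lying in $G_i[U_{i+1}]$): because $C_4$ is bipartite, the Erd\H{o}s--Stone theorem guarantees that any graph on $|U_i|$ vertices with more than $\eta|U_i|^2$ edges contains a $C_4$, so a greedy extraction of edge-disjoint $C_4$'s reduces the number of uncovered outside edges below $\eta|U_i|^2$ for some $\eta$ with $1/m\ll\eta\ll\mu$. Second, a \emph{cover-down step}: every remaining outside edge must be absorbed into a copy of $C_4$ whose other three edges lie in $G_i[U_{i+1}]$. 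Since the residual graph is still $C_4$-divisible and every edge incident to a vertex in $U_i\setminus U_{i+1}$ is an outside edge, each $u\in U_i\setminus U_{i+1}$ has even outside degree; I pair the outside edges at $u$ into pairs $\{uv,uw\}$ and close each such pair into a $C_4$ $uvxw$ by choosing a common neighbour $x\in U_{i+1}$ of $v$ and $w$. (Outside edges with both endpoints in $U_i\setminus U_{i+1}$ are handled analogously after pairing at both ends.)

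The principal obstacle is executing the cover-down step while preserving edge-disjointness from the approximate decomposition and from itself. To handle this I would, before running the approximate decomposition, set aside a random \emph{reservoir} $R\subseteq G_i[U_{i+1}]$ chosen so that every pair of vertices in $U_i$ has $\Theta(\mu|U_{i+1}|)$ common neighbours via $R$-edges; the vortex min-degree hypothesis guarantees $\Theta(|U_{i+1}|)$ common neighbours in $G$ between any such pair, so Hoeffding's inequality (Lemma~\ref{lem:chernoff}) produces such an $R$ with room to spare. The approximate decomposition is then performed on $G_i-R$ and the cover-down $C_4$'s draw their three inside edges from $R$. The scaling $\eta|U_i|^2\ll\mu|U_{i+1}|^2$ (which is ensured by choosing $\eta\ll\mu^3$) means that the greedy choices of the $x$'s never run out of options, and $C_4$-divisibility of the leftover is preserved automatically because we only remove $C_4$'s. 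Iterating $\ell$ times leaves a subgraph of $G[U_\ell]$, as required.
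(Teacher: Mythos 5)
Your high-level architecture is the right one, and it is essentially the one used in \cite{stef} (and in the analogous Lemmas~\ref{lem:nearoptimal:bip} and~\ref{lem:nearoptimal} proved in this paper): induct over the vortex levels, at each level approximately decompose the edges outside $G[U_{i+1}]$, and then ``cover down'' the sparse leftover into $C_4$'s whose remaining edges are drawn from a reserved portion of $G[U_{i+1}]$, using $2$-divisibility to pair the leftover edges at each vertex of $U_i\setminus U_{i+1}$.

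There is, however, a genuine gap in the approximate-decomposition step. A greedy Erd\H{o}s--Stone extraction only guarantees that the \emph{total} number of uncovered outside edges is at most $\eta|U_i|^2$; it gives no per-vertex control, so the leftover may contain vertices of degree $\Theta(|U_i|)=\Theta(|U_{i+1}|/\mu)$. For such a vertex $v\in U_{i+1}$, the cover-down consumes at least one reservoir edge at $v$ for every leftover outside edge at $v$, i.e.\ up to $\Theta(|U_{i+1}|/\mu)$ edges of $G_i[U_{i+1}]$ incident to $v$ --- more than $v$'s entire degree into $U_{i+1}$, so the process cannot complete there, and even a milder excess would destroy the degree condition $d(v,U_{i+1})\geq(1/2+4\mu)|U_{i+1}|$ needed for the next iteration. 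Your scaling condition $\eta|U_i|^2\ll\mu|U_{i+1}|^2$ bounds only the aggregate count and does not rule this out. This is precisely why the argument in \cite{stef} (mirrored here by Lemmas~\ref{lem:careofbad:bip} and~\ref{lem:boundmaxdegree:bip}, resp.~\ref{lem:careofbad} and~\ref{lem:boundmaxdegree}) devotes a separate, non-trivial step to producing an approximate decomposition whose leftover has small \emph{maximum degree}: one partitions the graph according to a $K_t$-decomposition of $K_s$ and iterates a ``take care of the bad vertices'' lemma, since covering the high-degree vertices of one piece can create new high-degree vertices that must be passed forward. Once that max-degree bound $\Delta(H)\leq\gamma|U_i|$ with $\gamma\ll\mu^2$ is in hand, the rest of your plan (reservoir, pairing, and the resulting bound $\Delta(\bigcup\cF[U_{i+1}])\leq\mu^3|U_{i+1}|$ that preserves the vortex property) goes through as in Lemma~\ref{lem:coverdown:bip}.
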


We must prove the following lemma which reserves an absorber that can be used to deal with this leftover graph $H$.

\begin{lemma}\label{lem:abs:c4}
Let $n, m_1, m_2, m_3\in \N$ with
$1/n\ll 1/m_1\ll 1/m_2\ll 1/m_3\ll 1$.
Let $G$ be a graph on $n$ vertices with $\delta(G)\geq 2n/3-m_2$. Suppose that $G$ is not $m_1$-extremal. Let $U\subseteq V(G)$ with $|U|=m_3$. Then $G$ contains a $C_4$-divisible subgraph $A^*$ with $|A^*|\leq 2^{m_3^2}$ such that for any $C_4$-divisible graph $H$ on $U$ that is edge-disjoint from $A^*$, the graph $A^*\cup H$ has a $C_4$-decomposition.%
\COMMENT{We need to apply this lemma to $G-G[U_i]$ for some suff. large $i$ - doesn't matter that $H$ isn't a subgraph of this graph.}
\end{lemma}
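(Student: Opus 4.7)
The plan is to follow the template of Lemma~\ref{lem:abs:c2k}. Enumerate all $C_4$-divisible graphs on $U$ as $H_1,\ldots,H_p$ with $p\leq 2^{\binom{m_3}{2}}$, and build an absorber $A_i$ for each. For each $H_i$, attach a $C_4$-connector $H_i^{\con}\subseteq G-G[U]$, edge-disjoint from everything chosen so far, producing a connected $C_4$-divisible graph $H_i':=H_i\cup H_i^{\con}$ on at most $3m_3$ vertices; set $h_i:=e(H_i')$. Vertex-disjointly from everything fixed so far, locate a cycle $C^i$ of length $h_i$ and a copy $J_i$ of $L(h_i/4,2)$ inside $G$, and find a $(H_i',C^i)_{C_4}$-transformer $T_i$ together with a $(C^i,J_i)_{C_4}$-transformer $T_i'$ via the $C_4$-specific construction of Section~\ref{sec:specialc4trans}, edge-disjoint from every previous choice. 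Then $A_i:=H_i^{\con}\cup T_i\cup C^i\cup T_i'\cup J_i$ is a $C_4$-absorber for $H_i$, and $A^*:=\bigcup_{i=1}^p A_i$ satisfies the required bound $|A^*|\leq 2^{m_3^2}$.

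The main obstacle is carrying out the transformer construction greedily under the weaker degree hypothesis $\delta(G)\geq 2n/3-m_2$. In the construction of Section~\ref{sec:specialc4trans}, each edge $xy\in E(C)$ contributes auxiliary vertices $z^{x,y},z^{y,x}$ subject to $z^{x,y}\in N(x)\cap N(\phi(x))\cap N(w^x)$, $z^{y,x}\in N(y)\cap N(\phi(y))\cap N(w^y)$, and $z^{x,y}z^{y,x}\in E(G)$; each $w^x$ must itself be a common neighbour of the two $z$-vertices incident to it. Now the hypothesis supplies only that any \emph{pair} of vertices has at least $n/3-2m_2$ common neighbours, while three vertices could a priori share no common neighbour at all.

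Non-extremality is used precisely to furnish the missing triple intersections. The key claim is that there is a \emph{typical} set $V^*\subseteq V(G)$ with $|V^*|\geq n-O(m_1)$ such that any three vertices in $V^*$ share at least $\alpha n$ common neighbours, for an absolute constant $\alpha>0$. Indeed, if $u,v,w\in V^*$ had fewer than $\alpha n$ common neighbours, then the three non-neighbourhoods $\overline{N}(u),\overline{N}(v),\overline{N}(w)$ — each of size at most $n/3+m_2$ — would almost partition $V(G)$ into three pieces of size $n/3\pm O(m_2)$. A short case analysis on the edge densities between these pieces, using the minimum degree condition inside each, would produce disjoint sets $S,T$ of size at least $n/3-m_1$ witnessing type-1 or type-2 $m_1$-extremality of some spanning subgraph $G'$ of $G$ with $\delta(G')\geq 2n/3-m_2$, contradicting the hypothesis. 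With this in hand, I would select the vertices of $C^i$, $J_i$ and each $w^x$ from $V^*$, so that every triple arising in the transformer construction has $\Omega(n)$ common neighbours; the adjacent pair $\{z^{x,y},z^{y,x}\}$ can then be picked greedily while avoiding the constantly many vertices and edges fixed at earlier steps. Since $n\gg m_3$ and the total budget across the $p\leq 2^{\binom{m_3}{2}}$ absorbers is $O(m_3^2\cdot 2^{m_3^2})\leq 2^{m_3^2}$ (after a mild adjustment of the hierarchy), the greedy procedure succeeds and the claimed $A^*$ is obtained.
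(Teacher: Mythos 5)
Your skeleton (enumerate the $H_i$, attach connectors, route everything through $(H_i',C^i)$- and $(C^i,J_i)$-transformers) matches the paper, but the way you use non-extremality does not work. Your key claim --- that after deleting $O(m_1)$ atypical vertices every remaining \emph{triple} has $\Omega(n)$ common neighbours --- is false. Take $n=3N$ and $V(G)=X\cup P\cup Q$ with $|X|=|P|=|Q|=N$, where $X$ is independent, $G[P]$ and $G[Q]$ are perfect matchings, and all edges between distinct parts are present. Then $\delta(G)\geq 2N=2n/3$, and $G$ is not $m_1$-extremal: any two vertices in different parts are adjacent, so a pair of sets witnessing type~1 or two independent sets witnessing type~2 would each have to live inside a single part of size only $N$, and $P,Q$ have independence number $N/2$. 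Yet for every $u\in X$, $v\in P$, $w\in Q$ the common neighbourhood of $\{u,v,w\}$ is exactly the two matching partners $\{v',w'\}$: no vertex of $X$ works (it misses $u$), and a vertex of $P$ (resp.\ $Q$) sees $v$ (resp.\ $w$) only if it is its match. Every vertex lies in $\Theta(n^2)$ such bad triples, so no set $V^*$ of size $n-O(m_1)$ as you describe exists. The underlying implication you invoke is also not available: a single bad triple only tells you that $\overline{N}(u),\overline{N}(v),\overline{N}(w)$ nearly partition $V(G)$; it gives no control on the edges inside or between those three pieces, so no extremal witness follows.

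Even granting the triple claim, the step you elide is the real crux: $z^{x,y}$ and $z^{y,x}$ must be \emph{adjacent}, so you need an edge of $G$ between two candidate sets, and you cannot ``pick them greedily'' independently. If those sets only have size $\alpha n$, non-extremality (which only forbids empty bipartite graphs between sets of size at least $n/3-m_1$) gives you nothing. This is precisely where the paper's proof does its work: it embeds the $w^x$ \emph{after} the $Z$-vertices (so each $w^x$ only needs to be a common neighbour of two already-embedded vertices, which $\delta(G)\geq 2n/3-m_2$ supplies), reducing the whole difficulty to finding a ``transforming'' edge between $N(x)\cap N(\phi(x))$ and $N(y)\cap N(\phi(y))$ --- two sets of size at least $n/3-O(m_2)$ by the degree condition alone. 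It then runs a dichotomy: either all pairs of such large sets span an edge (Case~1), or some pair does not, in which case non-extremality forces the two sets to intersect in a large independent set $X'$; the cycles $C^i$ are then deliberately placed across the resulting bipartition $(X'',Y'')$, and a case analysis on the locations of $\phi(x),\phi(y)$ either finds the transforming edge or exhibits a type~1 or type~2 witness, a contradiction. You would need to rebuild your argument along these lines; note also that $\phi(x)$ may lie in the prescribed set $U$, so you cannot assume it belongs to any ``typical'' set you construct.
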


Lemma~\ref{lem:notextrem} follows directly from these results.

\begin{proofof}{Lemma~\ref{lem:notextrem}} (Assuming Lemma~\ref{lem:abs:c4}.)
Let $m_3\in \N$ and $\mu$ be such that 
$$1/n\ll 1/m_1\ll 1/m_2\ll 1/m_3\ll \mu\ll 1.$$
Apply Lemma~\ref{lem:vortex} to $G$ to find a $(2/3-2\mu, \mu, m_3)$-vortex%
\COMMENT{$2/3-2\mu$ (not $2/3-\mu$) because our minimum degree is $2n/3-1$.}
$U_0\supseteq U_1\supseteq \dots \supseteq U_\ell$ in $G$. Define $\ell_0:=\lceil \log_{\mu}(m_2/n)\rceil+1$. We have%
\COMMENT{\begin{align*} |U_{\ell_0}|&=\lfloor \mu |U_{\ell_0-1}| \rfloor= \lfloor \mu\lfloor \mu |U_{\ell_0-1}| \rfloor\rfloor\\& =\dots \geq \mu(\mu(\mu(\dots (\mu n-1)-1)-1)-1)-1= \mu^{\ell_0}n-(\mu^{\ell_0-1}+\mu^{\ell_0-2}+\dots+1) \\&\geq \mu^{\ell_0}n-2\geq \mu^2 m_2-2 \end{align*}}
$$\mu^2 m_2-2\leq \mu^{\ell_0}n-2\leq |U^{\ell_0}|\leq \mu^{\ell_0}n \leq \mu m_2.$$

Let $G':=G-G[U_{\ell_0}]$. We have $\delta(G')\geq 2n/3-1-|U_{\ell_0}|\geq 2n/3-m_2$, so $G'$ is not $m_1$-extremal. Apply Lemma~\ref{lem:abs:c4} to the graph $G'$ with $U_\ell$ playing the role of $U$ to find $A^*\subseteq G'$ as in the lemma. We have $\Delta (A^*)\leq |A^*|\leq 2^{m_3^2}\leq |U_{\ell_0}|/10$, so $U_0\supseteq U_1\supseteq \dots \supseteq U_\ell$ is a $(1/2+4\mu, \mu, m_3)$-vortex in $G^*:=G-A^*$. Then apply Lemma~\ref{lem:covermost} to $G^*$ to find $H\subseteq G^*[U_\ell]$ such that $G^*-H$ has a $C_4$-decomposition. Observing that $A^*\cup H$ has a $C_4$-decomposition (by Lemma~\ref{lem:abs:c4}) completes the proof.
\end{proofof}


To prove Lemma~\ref{lem:abs:c4}, we will find a $C_4$-absorber for each possible $C_4$-divisible graph on $U$. We will use the transformer construction which was given in Section~\ref{sec:specialc4trans} and embed the vertices of the transformer in the order: $V(H\cup C), \bigcup_{e\in E(C)}Z^e, \bigcup_{x\in V(C)} \{w^x\}$.
The difficulty arises when we try to embed the vertices in $\bigcup_{e\in E(C)}Z^e$ since, unlike in \cite{mindeg}, we can no longer guarantee that any set of three vertices will have a common neighbour.
 
We will say that the edge $v_xv_y$ \emph{transforms} $xy$ to $\phi(x)\phi(y)$ if $v_x\in N(x)\cap N(\phi(x))$ and $v_y\in N(y)\cap N(\phi(y))$.
Suppose we are transforming the edge $xy$ to $\phi(x)\phi(y)$. We are able to do this if there is an edge between $N(x)\cap N(\phi(x))$ and $N(y)\cap N(\phi(y))$. 
These ``transforming" edges are related to the vertices in $\bigcup_{e\in E(C)}Z^e$. That is, for each $xy\in E(C)$, the edge $z^{x,y}z^{y,x}$ transforms $xy$ to $\phi(x)\phi(y)$. This suggests that we will be able to find an absorber as long as there do not exist $X, Y\subseteq V(G)$ with $|X|, |Y|\sim n/3$ and $e(X,Y)=0$ (note that $X$ and $Y$ are not necessarily disjoint, unlike in the definition of $m$-extremal).

\begin{proofof}{Lemma~\ref{lem:abs:c4}}
Let $H_1, \dots, H_p$ be an enumeration of all possible $C_4$-divisible graphs on $U$  and note that $p\leq 2^{\binom{m_3}{2}}$. For each $1\leq i\leq p$, find an edge-disjoint $C_4$-connector $H_i^\con\subseteq G-G[U]$ (using the procedure given in Section~\ref{sec:absorbers}). Each $H_i':=H_i\cup H_i^\con$ is $C_4$-divisible and $|H_i'|\leq 3m_3$.
Let $h_i:=e(H_i')$, let $C^i$ be a cycle of length $h_i$ and let $J_i$ be a copy of $L(h_i/4,2)$. Our strategy is as follows.
Suppose that $G\setminus \bigcup_{i=1}^p H_i'$ contains vertex-disjoint copies of $C^i$ and $J_i$ such that we are able to find edge-disjoint $(C^i,H_i')_{C_4}$- and $(C^i,J_i)_{C_4}$-transformers $T_i$ and $T_i'$. Then we can combine these to obtain a $C_4$-absorber $A_i$ for $H_i$ as in the proof of Lemma~\ref{lem:abs:c2k} (more precisely, letting $A_i:=H_i^\con\cup T_i\cup C^i\cup T_i'\cup J_i$). We use the following claim.

\claim{Claim}{There exist vertex-disjoint copies of $C^1, \dots, C^p, J_1, \dots, J_p$ in $G\setminus \bigcup_{i=1}^p H_i'$ such that the following holds. Let $W\subseteq V(G)$ with $|W|\leq m_2$. For any $1\leq i \leq p$, any $xy\in E(C^i)$ and any $\phi(x)\phi(y) \in E(G)$ there is an edge $v_xv_y\in E(G\setminus W)$ which transforms $xy$ to $\phi(x)\phi(y)$.}
\medskip

We consider two cases.

\claim{Case 1}{For all sets $X, Y\subseteq V(G)$ with $|X|, |Y|\geq n/3-3m_2$, $e_G(X,Y)>0$.}

Find vertex-disjoint copies of $C^1, \dots, C^p, J_1, \dots, J_p$ (anywhere) in $G\setminus \bigcup_{i=1}^p H_i'$. Consider any $W\subseteq V(G)$ with $|W|\leq m_2$, any $xy, \phi(x)\phi(y)\in E(G)$.
Let $X:=(N_G(x)\cap N_G(\phi(x)))\setminus W$ and $Y:=(N_G(y)\cap N_G(\phi(y)))\setminus W$. Note that
$$|X|, |Y|\geq 2\delta(G)-n-|W|\geq n/3-3m_2,$$
so $e_G(X,Y)>0$. Any edge $v_xv_y\in E_G(X,Y)$ transforms $xy$ to $\phi(xy)$.

\claim{Case 2}{There exist $X, Y\subseteq V(G)$ with $|X|, |Y|\geq n/3-3m_2$ such that $e_G(X,Y)=0$.}

Since $G$ is not $m_1$-extremal, $X\cap Y\neq \emptyset$. Let $v\in X\cap Y$ and note that $N_G(v)\subseteq V(G)\setminus (X\cup Y)$. So $|X\cup Y|\leq n/3+m_2$ and 
$$|X\cap Y|\geq 2(n/3-3m_2)-(n/3+m_2)=n/3-7m_2.$$
Let $X'\subseteq X\cap Y$ of size $\lfloor n/3\rfloor -7m_2$. Note that $e_G(X')=0$.

Let $m:=m_1/10$. For each $i\in \{m, n/3-\sqrt m\}$, let $U_{i}:=\{v:v\in V(G)\setminus X', d_G(v, X')\leq i\}$. We have
$$|X'|(2n/3-m_2)\leq e_G(X', V(G)\setminus X')\leq |U_{i}|i+(n-|X'|-|U_{i}|)|X'|$$
which yields
$$|U_{i}|\leq \frac{|X'|(n/3-|X'|+m_2)}{|X'|-i}\leq \frac{|X'|(8m_2+1)}{|X'|-i}.$$
Thus, we have $|U_{m}|\leq 9m_2$ and $|U_{n/3-\sqrt m}|\leq n/100$.
Set $X'':=X'\cup U_{m}$, $Y':=V(G)\setminus X''$ and $Y'':=Y'\setminus U_{n/3-\sqrt m}$.
Note that:
\begin{enumerate}[(i)]
	\item for every $v\in X''$, $d_G(v, Y')\geq 2n/3-2m$;\label{item:absc4:1}
	\item for every $v\in Y'$, $d_G(v, X'')\geq m$ and $d_G(v, Y')\geq 2n/3-m_2-|X''|$;\label{item:absc4:2}
	\item for every $v\in Y''$, $d_G(v, X'')\geq n/3-\sqrt m$;\label{item:absc4:3}
	\item $n/3-8m_2\leq |X''|\leq n/3+2m_2$ and $2n/3-2m_2\leq |Y'|\leq 2n/3+8m_2$.\label{item:absc4:4}
\end{enumerate}
Find vertex-disjoint copies of $C^1, \dots, C^p, J_1, \dots, J_p$ in $G\setminus\bigcup_{i=1}^p H_i'$ such that each cycle $C^i\subseteq G[X'', Y'']$. Consider any $W\subseteq V(G)$ with $|W|\leq m_2$, any $1\leq i\leq p$, any $xy\in E(C^i)$ and any $\phi(x)\phi(y)\in E(G)$. We will assume, without loss of generality, that $x\in X''$ and $y \in Y''$.

Suppose first that $\phi(x),\phi(y) \in X''$. Note that \eqref{item:absc4:1} and \eqref{item:absc4:2} imply
\begin{align*}
|N_G(y,Y')\cap N_G(\phi(y),Y')|&\geq (2n/3-m_2-|X''|)+(2n/3-2m)-|Y'|\\
&= n/3-2m-m_2\geq n/3-3m.
\end{align*}
Choose $v_y$ to be any vertex in $(N_G(y,Y')\cap N_G(\phi(y),Y'))\setminus W$. By \eqref{item:absc4:2} and \eqref{item:absc4:4}, $v_y$ has at least $2n/3-m_2-|X''|>n/4$ neighbours in $Y'$. Since
\begin{align*}
|N_G(x, Y')\cap N_G(\phi(x), Y')|\stackrel{\eqref{item:absc4:1}}{\geq} 2(2n/3-2m)-|Y'|\stackrel{\eqref{item:absc4:4}}{\geq} |Y'|-5m,
\end{align*}
$v_y$ has many neighbours in $(N_G(x, Y')\cap N_G(\phi(x), Y'))\setminus W$, choose any one of these for $v_x$.

Now suppose that $\phi(x),\phi(y)\in Y'$. It follows from \eqref{item:absc4:2}--\eqref{item:absc4:4} that
\begin{align*}
|N_G(y,X'')\cap N_G(\phi(y),X'')|\geq (n/3-\sqrt m)+m-(n/3+2m_2)=m-\sqrt m -2m_2\geq m/2.
\end{align*}
Choose any vertex from $(N_G(y,X'')\cap N_G(\phi(y),X''))\setminus W$  for $v_y$. This vertex is adjacent to all but at most $3m$ vertices in $Y'$, by \eqref{item:absc4:1} and \eqref{item:absc4:4}. Use \eqref{item:absc4:1} and \eqref{item:absc4:2} to see that  $|N_G(x, Y')\cap N_G(\phi(x), Y')|\geq n/3-3m$. Thus $v_y$ must have many neighbours in $(N_G(x,Y')\cap N_G(\phi(x), Y'))\setminus W$. Choose any suitable vertex for $v_x$.

A similar argument deals with the case when $\phi(x)\in X''$ and $\phi(y)\in Y'$. We use that
$$|N_G(y,X'')\cap N_G(\phi(y),X'')|\geq m/2\;\text{ and }\;|N_G(x, Y')\cap N_G(\phi(x), Y')|\geq |Y'|-5m$$
to find suitable vertices $v_y\in (N_G(y,X'')\cap N_G(\phi(y),X''))\setminus W$ and $v_x\in (N_G(x, Y')\cap N_G(\phi(x), Y'))\setminus W$.

Finally, suppose that $\phi(x)\in Y'$ and $\phi(y)\in X''$. We again use \eqref{item:absc4:1} and \eqref{item:absc4:2} to see that 
$$|N_G(x, Y')\cap N_G(\phi(x), Y')|, |N_G(y, Y')\cap N_G(\phi(y), Y')|\geq n/3-3m.$$
Let
\begin{align*}
Y_x&:=(N_G(x, Y')\cap N_G(\phi(x), Y'))\setminus W\text{ and}\\
Y_y&:=(N_G(y, Y')\cap N_G(\phi(y), Y'))\setminus W,
\end{align*}
so $|Y_x|,|Y_y|\geq n/3-4m$. If $e_G(Y_x, Y_y)>0$ choose any $v_xv_y\in E_G(Y_x,Y_y)$. Suppose then that $e_G(Y_x, Y_y)=0$. Note that $Y_x\cap Y_y\neq \emptyset$, else $G$ is $m_1$-extremal of type 1. So, as previously, we can let $v\in Y_x\cap Y_y$ and note that $N_G(v)\subseteq V(G)\setminus (Y_x\cup Y_y)$. So $|Y_x\cup Y_y|\leq n/3+m_2$ and $$|Y_x\cap Y_y|\geq 2(n/3-4m)-(n/3+m_2)\geq n/3-9m.$$
But then $G$ is $m_1$-extremal of type 2 (take $S:=X'$ and $T:=Y_x\cap Y_y$) which is a contradiction. This completes the proof of the claim.

\medskip We now explain how to use the claim to find, for each $1\leq i\leq p$, a $(C^i, H_i')_{C_4}$-transformer (and $(C^i,J_i)_{C_4}$-transformers are found in exactly the same way). We will use the construction described in Section~\ref{sec:specialc4trans}. Let $\phi$ be an edge-bijective graph homomorphism from $C^i$ to $H_i$. For each edge $xy\in E(C^i)$, use the claim (with $W$ set to be all vertices which have been used at any point previously in the construction) to find an edge which transforms $xy\in E(C^i)$ to $\phi(xy)$ and thus obtain suitable embeddings for the vertices in $\bigcup_{e\in E(C^i)} Z^e$. It is then an easy task to greedily embed remaining vertices of the transformer (the vertices of the form $w^x$ for some $x\in V(C^i)$), since each vertex of this type has at most two neighbours previously embedded. Continuing in this way, we find edge-disjoint absorbers $A_i$ for each $H_i$ such that $|A_i|\leq m_3^3$. Let $A^*:=\bigcup_{i=1}^p A_i$ and note that $|A^*|\leq pm_3^3\leq 2^{m_3^2}$.
\end{proofof}

\subsection{Type 1 extremal}\label{sec:type1}
In this section, we will prove Lemma~\ref{lem:t1ort2} for graphs which are type~1 extremal. The next result takes any graph $G$ which is type~1 extremal and partitions its vertices into sets $A$, $B$ and $C$ so that each vertex has many neighbours in two of the parts.

\begin{prop}\label{prop:setup}
Let $n,m\in \N$ such that
$1/n\ll 1/m\ll 1$.
Let $G$ be a graph on $n$ vertices with $\delta(G)\geq 2n/3-m$. Suppose $G$ is $m$-extremal of type 1. Then there exists a partition $A, B, C$ of $V(G)$ satisfying:
\begin{enumerate}[\rm(P1)]
\item for all $v\in A$, $d_G(v, A), d_G(v, B)\geq 5n/18$;\label{prop:setup:1}%
\COMMENT{$5n/18$ bigger than $2/3 \times n/3$.}
\item for all $v\in C$, $d_G(v, B), d_G(v, C)\geq 5n/18$;\label{prop:setup:2}
\item for all but at most $3m$ vertices $v\in A$, $d_G(v, A), d_G(v, B)\geq n/3-6m$;\label{prop:setup:3}
\item for all but at most $3m$ vertices $v\in C$, $d_G(v, B), d_G(v, C)\geq n/3-6m$;\label{prop:setup:4}
\item for all $v\in B$, $d_G(v, A), d_G(v, C)\geq n/50$;\label{prop:setup:5}
\item for all but at most $50m$ vertices $v\in B$, $d_G(v, A), d_G(v, C)\geq 5n/18$;\label{prop:setup:5*}
\item $n/3-5m\leq |A|,|B|, |C|\leq n/3+3m$.\label{prop:setup:6}
\end{enumerate}
\end{prop}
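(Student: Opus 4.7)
The plan is to start from the sets $S, T$ witnessing type~1 extremality, let $R := V(G) \setminus (S \cup T)$, and promote into $A$ (respectively, $C$) those $R$-vertices with fewer than $n/50$ neighbours in $T$ (respectively, $S$), so that (P5) is automatic for the remaining $B$-vertices; the other properties will then follow from the minimum degree. The main obstacle will be bounding the number of promoted vertices tightly enough to satisfy (P7).

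First I would pin down the sizes. Since every $v \in S$ has no edge to $T$ and $d(v) \geq 2n/3 - m$, we get $|S| + |R| \geq 2n/3 - m + 1$, hence $|T| \leq n/3 + m - 1$; symmetrically $|S| \leq n/3 + m - 1$ and so $|R| \geq n/3 - 2m + 2$. Next I would set
\[
M_A := \{v \in R : d_G(v, T) < n/50\}, \qquad M_C := \{v \in R : d_G(v, S) < n/50\},
\]
and take $A := S \cup M_A$, $C := T \cup M_C$, $B := R \setminus (M_A \cup M_C)$. The sets $M_A, M_C$ are disjoint since every $v \in R$ satisfies $d(v, S \cup T) \geq d(v) - |R| \geq n/3 - 3m > 2 \cdot n/50$. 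The crucial size estimate is $|M_A|, |M_C| \leq 3m$, obtained by double-counting $e(R, T)$: the minimum degree on $T$-vertices (no edges to $S$) gives $e(R, T) \geq |T|(2n/3 - m - |T| + 1)$, while splitting by the threshold gives $e(R, T) \leq |M_A|(n/50) + (|R| - |M_A|)|T|$. Rearranging using $|S| + |T| + |R| = n$ yields $|M_A|(|T| - n/50) \leq |T|(n/3 - |S| + m - 1)$, and since $|T|/(|T| - n/50) \leq 50/47 + o(1)$ and $|S| \geq n/3 - m$, this forces $|M_A| \leq (100/47)m + o(m) \leq 3m$. Maximising $|A| = |S| + |M_A|$ over $|S|$ then gives $|A| \leq n/3 + 2m$, symmetrically for $|C|$, and the bounds on $|B|$ follow from $|A| + |B| + |C| = n$; this establishes (P7).

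For the degree properties I would check each case. A vertex $v \in S \subseteq A$ has $d(v, T) = 0$ and $d(v, C) = d(v, M_C) \leq 3m$, so $d(v, A) + d(v, B) \geq 2n/3 - 4m$; combined with $|A|, |B| \leq n/3 + 3m$ this forces $d(v, A), d(v, B) \geq n/3 - 7m$, yielding (P1) and (P3). A vertex $v \in M_A$ has $d(v, A) + d(v, B) \geq 2n/3 - m - n/50 - 3m$, so $d(v, A), d(v, B) \geq 47n/150 - O(m) \geq 5n/18$, giving (P1); (P3) may fail for such $v$, but $|M_A| \leq 3m$ exactly fills the allowed exceptions. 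Properties (P2) and (P4) for $C$ are symmetric. For $v \in B$, the defining $d(v, S), d(v, T) \geq n/50$ combined with $A \supseteq S$ and $C \supseteq T$ directly gives (P5). For (P6), an analogous double-count of $e(R, S)$ shows at most $|S|(n/3 - |T| + m)/(|S| - 5n/18) \leq 12m$ vertices of $R$ have $d(v, S) < 5n/18$, symmetrically for $T$, so at most $50m$ vertices of $B$ fail (P6).

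The hard part is the bound $|M_A| \leq 3m$: the naive Markov-style count on $\{v \in R : d_G(v, T) < n/50\}$ leaks a constant strictly greater than $3$, and one really needs the sharp form of the double-count (exploiting the tight upper bound $|T| \leq n/3 + m - 1$) to couple $|M_A|$ with $|S|$, so that $|S| + |M_A|$ fits inside $n/3 + 3m$.
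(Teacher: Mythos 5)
Your proposal is correct and follows essentially the same route as the paper: both promote the vertices of the middle set with fewer than $n/50$ neighbours towards one side into the opposite class, bound the number of promoted vertices by the same double count of edges between the middle set and $S$ (resp.\ $T$), and verify the degree conditions from the minimum degree. The only cosmetic difference is that the paper first shrinks $S$ and $T$ to size exactly $\lceil n/3\rceil-m$, which makes (P7) immediate from $|M_A|\leq 3m$ and avoids the coupling of $|M_A|$ with $|S|$ that you need at the end.
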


\begin{proof}
Since $G$ is $m$-extremal of type 1, there exist disjoint sets $A_1, C_1\subseteq V(G)$ such that $|A_1|, |C_1|=\lceil n/3\rceil-m$ and  $e_{G}(A_1,C_1)=0$. Let $B_1:=V(G)\setminus (A_1\cup C_1)$. Since $\delta(G)\geq 2n/3-m$, for all $v\in A_1$, $d_G(v,A_1)\geq n/3-3m$ and $d_G(v,B_1)\geq n/3$. Likewise, for all $v\in C_1$, $d_G(v,C_1)\geq n/3-3m$ and $d_G(v,B_1)\geq n/3$.

Let $B_{C}$ consist of all vertices $v$ in $B_1$ such that $d_G(v,A_1)<n/50$. By considering $e_{G}(A_1, B_1)$, we obtain the following bound.
$$|A_1|n/3\leq |B_{C}|n/50+(n/3+2m-|B_{C}|)|A_1|$$
which gives
\begin{equation*}
|B_{C}|\leq \frac{2m|A_1|}{|A_1|-n/50}\leq \frac{2m|A_1|}{9|A_1|/10}\leq 3m.
\end{equation*}%
\COMMENT{$n/50\leq |A_1|/10$}
Similarly, defining $B_{A}$ to consist of all vertices $v$ in $B_1$ such that $d_G(v,C_1)<n/50$, we get $|B_{A}|\leq 3m$.
Note that $B_{A}\cap B_{C}=\emptyset$. In exactly the same way,  we can show that for all but at most $$2\cdot\frac{2m|A_1|}{|A_1|-5n/18}\leq 50m$$%
\COMMENT{$|A_1|-5n/18\geq |A_1|/12$}
vertices $v\in B$, $d_G(v, A), d_G(v, C)\geq 5n/18$.
Set $A:=A_1\cup B_{A}$, $C:=C_1\cup B_{C}$ and $B:=B_1\setminus (B_{A}\cup B_{C})$. Properties (P\ref{prop:setup:1})--(P\ref{prop:setup:6}) are satisfied.
\end{proof}

The next result refines this partition and covers all atypical edges by copies of $C_4$ to leave a dense graph with a well-defined structure.

\begin{prop}\label{prop:setup2}
Let $n,m\in \N$ such that
$1/n\ll 1/m\ll 1$.
Let $G$ be a $C_4$-divisible graph on $n$ vertices with $\delta(G)\geq 2n/3-1$. Suppose that there exists a spanning subgraph $G'$ of $G$ such that $\delta(G')\geq 2n/3-m$ and $G'$ is $m$-extremal of type 1. Then there exists $G''\subseteq G$ and a partition $A, B, C$ of $V(G'')$ satisfying:
\begin{enumerate}[\rm(Q1)]
\item $e_{G''}(A)$ and $e_{G''}(C)$ are even; \label{prop:setup2:1}
\item $G''\subseteq G[A]\cup G[C]\cup G[B,A\cup C]$ and $G-G''$ has a $C_4$-decomposition;\label{prop:setup2:2}
\item for all $v\in A$, $d_{G''}(v, A), d_{G''}(v, B)\geq n/4$;\label{prop:setup2:3}%
\COMMENT{$2/3\times n/3\leq n/4\leq 5n/18$}
\item for all $v\in B$, $d_{G''}(v, A), d_{G''}(v, C)\geq n/4$;\label{prop:setup2:5}
\item for all $v\in C$, $d_{G''}(v, B), d_{G''}(v, C)\geq n/4$;\label{prop:setup2:4}
\item $n/3-55m\leq |A|, |B|, |C|\leq n/3+3m$.\label{prop:setup2:6}
\end{enumerate}
\end{prop}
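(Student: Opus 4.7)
My plan is to obtain $G''$ from $G$ by removing an explicit edge-disjoint family $\mathcal{R}$ of copies of $C_4$ whose union $R$ covers all unwanted edges---those in $E_G(A, C) \cup E_G(B)$ together with all edges incident to a small set of dropped vertices---and fixes the parities of $e_{G''}(A)$ and $e_{G''}(C)$.

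First, I would apply Proposition~\ref{prop:setup} to $G'$ to obtain a partition $V(G) = A_0 \cup B_0 \cup C_0$ satisfying (P1)--(P6), and then drop from $V(G'')$ the set $X \subseteq B_0$ of at most $50m$ vertices violating (P5*). Setting $A := A_0$, $B := B_0 \setminus X$, $C := C_0$, property (Q6) follows from (P6). On this restricted vertex set every $v \in A$ has $d_G(v, A), d_G(v, B) \geq 5n/18$ by (P1), every $v \in C$ has $d_G(v, B), d_G(v, C) \geq 5n/18$ by (P2), and every surviving $v \in B$ has $d_G(v, A), d_G(v, C) \geq 5n/18$ by (P5*). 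Since $5n/18 = n/4 + n/36$, this leaves a slack of $n/36 - O(m)$ at every vertex to absorb edge losses when edges are removed to form $G''$.

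Second, I would greedily construct $\mathcal{R}$ to cover every edge of $E_G(A, C) \cup E_G(B)$ and every edge of $G$ incident to $X$. For each $uv \in E_G(A, C)$ with $u \in A, v \in C$, choose $b, b' \in N_G(u) \cap N_G(v) \cap B$ forming the $C_4$ $u b v b'$; for each $bb' \in E_G(B)$, choose $a, a' \in N_G(b) \cap N_G(b') \cap A$ forming the $C_4$ $b a b' a'$; for each edge incident to a vertex $x \in X$, similarly choose a covering $C_4$ whose other three edges use vertices in the typical parts. Each common neighbourhood has size $\Omega(n)$ by (P1), (P2), (P5*), which accommodates the at most $O(m)$ previously used choices at each relevant vertex; if $|E_G(A, C) \cup E_G(B)|$ is much larger than $O(nm)$ (possible in principle up to $\Theta(n^2)$), I would apply Theorem~\ref{thm:witheps} to a padded version of this edge set (adding auxiliary edges to force $C_4$-divisibility and a high minimum degree) to decompose the bulk, and handle a small residue greedily. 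Finally, the parities of $e_R(A)$ and $e_R(C)$ are fixed by appending at most two further $C_4$'s to $\mathcal{R}$, each using exactly one $G[A]$ or $G[C]$-edge (for instance, a $C_4$ on $a_1, a_2, b_1, b_2$ with $a_i \in A, b_j \in B$ uses precisely one $G[A]$-edge), which are abundant by (P3).

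Setting $G'' := G - R$, property (Q1) follows from the parity fix; (Q2) follows since $\mathcal{R}$ is a $C_4$-decomposition of $R$ and covers every unwanted edge; (Q3)--(Q5) follow from the preserved slack of at least $n/36 - O(m) > n/72$ at each vertex; and (Q6) is immediate. The main obstacle is the covering step: one must simultaneously cover every unwanted edge, bound each vertex's $R$-degree by $O(m)$ (to preserve the degree slack), and guarantee that $R$ admits a $C_4$-decomposition; the interplay between a greedy argument for small unwanted edge sets and an application of Theorem~\ref{thm:witheps} for a larger bulk is the most delicate part.
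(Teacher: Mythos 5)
There are several genuine gaps here, the most serious being the parity fix for (Q\ref{prop:setup2:1}). First, a mechanical error: your covering gadgets do not actually cover the edges they target. The $C_4$ $ubvb'$ has edge set $\{ub,bv,vb',b'u\}$; the edge $uv\in E_G(A,C)$ is a chord of this cycle, not an edge of it, so removing this $C_4$ leaves $uv$ in place (likewise for $bab'a'$ and the edge $bb'$). The correct gadgets must contain the target edge, e.g.\ a cycle of the form $b b' x_1 x_2$ with $x_1x_2\in E(A)$ as in the paper, and note that such a gadget necessarily consumes one edge of $E(A)$ or $E(C)$ as well --- by the crossing-parity of a closed walk, a $C_4$ cannot use exactly one edge inside a part and have all its other edges cross between parts. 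This same parity constraint kills your proposed parity-fixing gadget: there is no $C_4$ on $\{a_1,a_2,b_1,b_2\}$ using exactly one $G[A]$-edge whose remaining edges all lie in $G[A,B]$; any $C_4$ that changes the parity of $e(A)$ or $e(C)$ must also use an edge of $E_G(A,C)\cup E_G(B)$. Since your plan covers \emph{all} of $E_G(A,C)\cup E_G(B)$ before attempting the parity fix, no such edge remains, so the fix cannot be carried out. This is why the paper protects one edge $xy\in E_G(A,C)\cup E_G(B)$ at the outset and performs the parity correction \emph{while} covering the last unwanted edges, using the congruence $e(A,C)+e(B)\equiv e(A)+e(C)\pmod 2$ (which follows from $2$-divisibility) to verify that assigning the leftover unwanted edges to $A$ and $C$ makes both counts even simultaneously --- a bookkeeping step your proposal omits entirely.

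Second, you do not address the degenerate case $e_G(A,C)+e_G(B)=0$, in which no parity-correcting $C_4$ exists at all. Here (Q\ref{prop:setup2:1}) must be deduced directly from the hypotheses: the paper shows that in this case the degree conditions force $n=3N+1$, $|A|=|C|=N$, $|B|=N+1$ with $G[A]$, $G[C]$ and $G[B,A\cup C]$ complete, and then uses $4\mid e(G)$ to rule out $\binom{N}{2}$ being odd. This case is a substantive part of the argument and cannot be skipped. Finally, a smaller point: applying Theorem~\ref{thm:witheps} to a ``padded'' version of $G[A,C]\cup G[B]$ is not viable, since that theorem needs the host graph to be $C_4$-divisible with minimum degree at least $(2/3+\eps)n$, and constructing such a padding whose added edges are themselves decomposable is as hard as the original problem; the standard route (and the paper's) is to greedily apply the Erd\H{o}s--Stone theorem to obtain an $\eta$-approximate decomposition and then treat the $O(\eta n^2)$ leftover edges and the few vertices of high leftover degree by hand.
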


\noindent Note that we do not require $G''$ to be spanning.

\begin{proof}
First apply Proposition~\ref{prop:setup} to $G'$ to find a partition $A, B, C$ of $V(G)$ satisfying (P\ref{prop:setup:1})--(P\ref{prop:setup:6}). Suppose that $e_G(A, C)+e_G(B)=0$. It is clear that taking $G''$ as $G$ with the partition $A,B,C$ will satisfy (Q\ref{prop:setup2:2})--(Q\ref{prop:setup2:6}). We must check (Q\ref{prop:setup2:1}). Since $N_G(x)\subseteq A\cup B$ for all $x\in A$ and so on, 
\begin{equation}\label{eq:veryexcept}
|A|+|B|-1, |A|+|C|, |B|+|C|-1\geq \delta(G)
\end{equation}
which implies that
$2n=2(|A|+|B|+|C|)\geq 3\delta(G)+2$
and $\delta(G)\leq (2n-2)/3$. Note that $n\not\equiv 0\mod 3$, otherwise $\delta(G)\geq 2n/3$ since $2n/3-1$ is odd and $G$ is $2$-divisible.
We can show that $n\not\equiv 2\mod 3$ either, else $\delta(G)\geq \lceil 2n/3\rceil -1=(2n-1)/3$.
Thus $n=3N+1$ for some $N\in \N$ and $\delta(G)=2N$. The inequalities in \eqref{eq:veryexcept} must be satisfied with equality, else $|A|+|B|+|C|>n$.
Hence $|A|=|C|=N$ and $|B|=N+1$; the graphs $G[A]$, $G[B,A\cup C]$ and $G[C]$ are complete and $G$ is $2N$-regular. If $e_G(A)=e_G(C)=\binom{N}{2}$ is odd, it is easy to check that $N\equiv 2,3\mod 4$. But then $e(G)=N(3N+1)$ is not divisible by four which contradicts $G$ being $C_4$-divisible. Hence (Q\ref{prop:setup2:1}) is also satisfied.

Let us assume then that $e_G(A, C)+e_G(B)>0$.
Our first step will be to cover all edges inside $B$ and between $A$ and $C$ using copies of $C_4$. We begin by reducing the maximum degree in $G[A, C]\cup G[B]$. Choose any edge $xy \in E_G(A, C)\cup E_G(B)$, we will protect this edge for the time being since we might need it later on. Let $G_0:=(G[A, C]\cup G[B])-\{xy\}$.
Let $\eta$ be chosen such that $1/m\ll\eta\ll 1$. The Erd\H{o}s-Stone theorem allows us to greedily remove copies of $C_4$ from $G_0$ until at most $\eta n^2$ edges remain. Let $\cF_0$ denote this collection of edge-disjoint copies of $C_4$ and let $G_1:=G_0-\bigcup \cF_0$ with $e(G_1)\leq \eta n^2$.

We say that a vertex $v$ is \emph{bad} if $d_{G_1}(v)\geq \eta^{1/2}n$. Note that $G$ contains at most $2\eta n^2/(\eta^{1/2}n)=2\eta^{1/2}n$ bad vertices.
Let $B'\subseteq B$ consist of all the vertices $v\in B$ such that $d_{G}(v, A)< 5n/18$ or $d_{G}(v,C)<5n/18$. Then $|B'|\leq 50m$ by (P\ref{prop:setup:5*}).
For each bad vertex $v$, let $S_v\subseteq N_{G_1}(v)$ be a set of vertices of maximal size such that $|S_v|$ is even, no vertex in $S_v$ is bad and $S_v\cap B'=\emptyset$. 
Note that each vertex appears in at most $2\eta^{1/2}n$ sets $S_v$. Pair up the vertices in each $S_v$ arbitrarily. Our aim is to find a path of length two between each pair in $G_2:=G-(G[A, C]\cup G[B]\cup \{xy\})$. In total we have to find at most $\eta n^2/2$ paths. Note that each pair in $S_v$ has at least $n/9$ common neighbours in $G_2$ (for $S_v$ where $v\in B$, it is important that $S_v\subseteq B\setminus B'$). This allows us to greedily embed the paths so that each vertex is used at most $\eta^{1/3} n/3$ times. Write $\cF_1$ for the edge-disjoint collection of copies of $C_4$ formed by taking $\bigcup G[v\cup S_v]$ together with these paths.
Let $G_3:=G-\bigcup (\cF_0\cup \cF_1)$. We have:
\begin{enumerate}[(a)]
\item for all $v\in V(G_3)$, $d_{G_3}(v)\geq d_{G_2}(v)-\eta^{1/3}n$;\label{item:g3deg}%
\COMMENT{CAREFUL!!!!! vertices with high degree in a strange direction could lose a lot of neighbours so not true with $G$ replacing $G_2$}
\item $\Delta (G_3[B]), \Delta(G_3[A, C])\leq \eta^{1/3} n$;\label{item:g3delta}
\item $1=|\{xy\}|\leq e_{G_3}(A, C)+e_{G_3}(B)\leq \eta n^2+1$.\label{item:someedges}
\end{enumerate}

We make the following observation
\begin{equation}\label{eq:congruence}
e_{G_3}(A, C)+e_{G_3}(B)\equiv e_{G_3}(A)+e_{G_3}(C)\mod 2.
\end{equation}
To see \eqref{eq:congruence}, note that $G_3$ is $C_4$-divisible since it was obtained by removing edge-disjoint copies of $C_4$ from $G$. In particular, this means that $G_3$ is $2$-divisible and so $e_{G_3}(A\cup C, B)$ is even. Since $e(G_3)$ is also even, the result follows.

We use \eqref{eq:congruence} to cover all remaining edges in $E_{G_3}(A, C)\cup E_{G_3}(B)$, at the same time ensuring we leave an even number of edges behind in each of $A$ and $C$.
If $e_{G_3}(C)$ is odd, then assign one edge from $E_{G_3}(A, C)\cup E_{G_3}(B)$ to $C$ (we use \eqref{item:someedges} to ensure that this edge exists) and the remainder to $A$. Otherwise, assign all edges from $E_{G_3}(A, C)\cup E_{G_3}(B)$ to $A$. Find a copy of $C_4$ covering each $e\in E_{G_3}(A, C)\cup E_{G_3}(B)$ of the following form (here we say that a cycle has the form $X_1X_2X_3X_4$ to indicate that the cycle visits vertices in $X_1$, $X_2$, $X_3$ and $X_4$ in this order):
\begin{itemize}
	\item $BBXX$, if $e\in E_G(B)$ and $e$ is assigned to $X\in \{A,C\}$;
	\item $CAAB$, if $e\in E_G(A,C)$ and $e$ is assigned to $A$;
	\item $ACCB$, if $e\in E_G(A,C)$ and $e$ is assigned to $C$.
\end{itemize}
We first check that it is possible to find cycles of these forms without using any vertex too often. The ordering of each cycle above is suggestive of the order in which its vertices should be embedded (for cycles of the form $BBXX$, choose the first vertex in $X$ to satisfy (P\ref{prop:setup:3}) or (P\ref{prop:setup:4}) in $G$, i.e., not one of the exceptional $3m$ vertices). Properties (P\ref{prop:setup:1})--(P\ref{prop:setup:6}) together with \eqref{item:g3deg} ensure that there are at least $n/100$ suitable candidates in $G_3$ for each vertex which is not an endpoint of the fixed edge $e$. In total we must find at most $\eta n^2+1$ cycles and each vertex appears in the fixed edge $e$ for at most $\eta^{1/3}n$ of these cycles, by \eqref{item:g3delta} and \eqref{item:someedges}. So it is possible to embed cycles of the required forms so that each vertex is used at most $2\eta^{1/3}n$ times. Let $\cF_2$ denote the collection of cycles thus obtained and let $G_4:=G_3-\bigcup \cF_2$. For each $v\in V(G_4)$, we have
\begin{equation}\label{eq:g4deg}
d_{G_4}(v)\geq d_{G_2}(v)-5\eta^{1/3}n.
\end{equation}

We now check that removing these cycles has the desired effect.  Observe that any edge which is assigned to $A$ forms a $C_4$ which uses one edge from $E_{G_3}(A)$ and no edges from $E_{G_3}(C)$. The same statement holds with $A$ and $C$ swapped. If $e_{G_3}(C)$ is odd, deleting the cycles in $\cF_3$ will remove one edge from $E_{G_3}(C)$ leaving $e_{G_4}(C)$ even. If $e_{G_3}(C)$ is even, no edges were assigned to $C$ so $e_{G_4}(C)$ remains even. To see that $e_{G_4}(A)$ will also be even, we note that \eqref{eq:congruence} implies that the number of edges assigned to $A$ was congruent to $e_{G_3}(A) \mod 2$.

Lastly, we cover all edges incident to vertices in $B'$ (so that we can ignore $B'$). Take each vertex $v\in B'$ and pair its neighbours up arbitrarily. Find a path of length two between each pair in $G_4[A\cup C,B]$ (each such path will form a copy of $C_4$ which covers two edges incident at $v$). By (P\ref{prop:setup:1}), (P\ref{prop:setup:2}) and \eqref{eq:g4deg}, any pair of vertices in $A\cup C$ has at least $n/10$ common neighbours in $B$ and, in total, we are required to find at most $|B'|n/2\leq 25mn$ paths. So we can find a collection $\cF_3$ of edge-disjoint copies of $C_4$ which covers all edges incident at $B'$ and uses each vertex in $V(G)\setminus B'$ at most $\eta n$ times. Let $B'':=B\setminus B'$ and let $G'':=(G_4-\bigcup \cF_3)\setminus B'$. It is easy to check that $G''$ with the partition $A, B'', C$ satisfies (Q\ref{prop:setup2:1})--(Q\ref{prop:setup2:6}). 
\end{proof}

Proposition~\ref{prop:setup2} takes us most of the way towards proving Lemma~\ref{lem:t1ort2} for graphs of type~1. All that remains is to show that the graphs $G''[A]$, $G''[C]$, $G''[A,B]$ and $G''[B,C]$ can be made to be $C_4$-divisible and then to decompose these using Theorems~\ref{thm:witheps}~and~\ref{thm:bipartiteversion}.

\begin{proofof}{Lemma~\ref{lem:t1ort2}\eqref{type1}}
Apply Proposition~\ref{prop:setup2} to $G$ to find $G''\subseteq G$ and a partition $A, B, C$ of $V(G'')$ satisfying properties (Q\ref{prop:setup2:1})--(Q\ref{prop:setup2:6}). We begin by making the graphs $G''[A]$ and $G''[C]$ $C_4$-divisible. Towards this aim, let $A'\subseteq A$ consist of all vertices $v\in A$ such that $d_{G''}(v,A)$ is odd. Clearly, $|A'|$ is even. Pair up the vertices in $A'$ arbitrarily. For each pair $a_1, a_2$, find a copy of $C_4$ of the form $a_1 A a_2 B$ in $G''$. Note that on removing a copy of $C_4$ of this form, $a_1$ and $a_2$ will both have even degree in $A$ and the degree of the third vertex in $A$ is reduced by two so its parity will not be changed. Do the same for the vertices in $C$ (finding cycles of the form $c_1Cc_2B$). Note that in total we must find at most $n/2$ copies of $C_4$. Properties (Q\ref{prop:setup2:3}), (Q\ref{prop:setup2:4}) and (Q\ref{prop:setup2:6}) imply that each pair has at least $n/10$ common neighbours in the required vertex classes, so we can avoid using any vertex more than $20$ times. Write $\cF_1$ for this collection of copies of $C_4$ and let $G_1:=G''-\bigcup \cF_1$. Now every vertex in $G_1[A]$ and $G_1[C]$ has even degree.

We also require the number of edges in $G_1[A]$ and in $G_1[C]$ to be divisible by four. We know already that the number of edges will be even (from (Q\ref{prop:setup2:1}) and the fact that $\cF_1$ uses an even number of edges from both $G''[A]$ and $G''[C]$). Say that $e_{G_1}(A)\equiv 2 \mod 4$. We can fix this by removing a graph $F$ consisting of three edge-disjoint copies of $C_4$ which take the following form:
$a_1Aa_2B, a_2Aa_3B, a_1Aa_3B$ where $a_1, a_2, a_3\in A$.
Note that $F[A]$ is a copy of $C_6$, so removing $F$ does not cause the degree of any vertex in $G_1[A]$ to become odd. We can remove a similar graph if $e_{G_1}(C)$ not divisible by four. We obtain a graph $G_2$ such that $G_2[A]$ and $G_2[C]$ are $C_4$-divisible. It follows from (Q\ref{prop:setup2:3}), (Q\ref{prop:setup2:4}) and (Q\ref{prop:setup2:6}) that $$\delta(G_2[A]), \delta(G_2[C])\geq n/4-50\geq (2/3+1/100)|A|, (2/3+1/100)|C|.$$ So we can apply Theorem~\ref{thm:witheps} to find $C_4$-decompositions $\cF_A$ and $\cF_C$ of $G_2[A]$ and $G_2[C]$, respectively. Let $G_3:=G_2- \bigcup (\cF_A\cup \cF_C)$.

We will now make the bipartite graphs $G_3[A,B]$ and $G_3[B,C]$ $C_4$-divisible. Note that for any $v\in A\cup C$, $d_{G_3}(v,B)$ is necessarily even. Let $B'\subseteq B$ consist of all vertices $v\in B$ such that $d_{G_3}(v, A)$ (and hence $d_{G_3}(v,C)$) is odd. Since $e_{G_3}(A,B)$ is even, $|B'|$ must also be even. Pair up the vertices in $B'$ arbitrarily. For each pair $b_1, b_2$, find a copy of $C_4$ of the form $Ab_1Cb_2$.
On removing these copies from $G_3$, we see that $b_1$ and $b_2$ now have even degree in $A$ and in $C$.
Properties (Q\ref{prop:setup2:5}) and (Q\ref{prop:setup2:6}) ensure that there are at least $n/10$ suitable candidates at each step of the embedding. Since there are fewer than $n/2$ pairs, we can choose these copies of $C_4$ so that no vertex is used more than $10$ times.
If, after removing these copies, the number of edges between $A$ and $B$ is not divisible by four then it must be congruent to $2\mod{4}$. We can correct this by removing three further edge-disjoint copies of $C_4$ of the form: $b_1Ab_2C$, $b_2Ab_3C$, $b_1Ab_3C$ where $b_1, b_2, b_3$ are distinct vertices in $B$.%
\COMMENT{choose $b_1$, $b_2$, $b_3$ first, then easy to find rest.}
Note that removing these copies of $C_4$ removes $6\equiv 2\mod 4$ edges between $A$ and $B$ but will not change the parity of $d(b_i,A)$ for any $i\in\{1,2,3\}$.
Write $\cF_2$ for the copies of $C_4$ removed in this step and let $G_4:=G_3- \bigcup \cF_2$. 
We now have $C_4$-divisible bipartite graphs $G_4[A,B]$ and $G_4[B,C]$ and $d_{G_4}(v, B)\geq n/4-100$ for all $v\in A\cup C$. Recall (Q\ref{prop:setup2:6}), which implies 
$$\dbip(G_4[A,B]), \dbip(G_4[B,C])\geq 2/3+1/100.$$ So we can use Theorem~\ref{thm:bipartiteversion} to find a $C_4$-decomposition of $G_4$. Thus we have found a $C_4$-decomposition of $G$.
\end{proofof}

\subsection{Type 2 extremal}\label{sec:type2}

In this section, we prove Lemma~\ref{lem:t1ort2} for graphs which are type~2 extremal. We begin by showing that graphs of this type closely resemble a balanced tripartite graph with high minimum degree. 

\begin{prop}\label{prop:setup'}
Let $n,m\in \N$ such that
$1/n\ll 1/m\ll 1$.
Let $G$ be a $C_4$-divisible graph on $n$ vertices. Suppose that there exists a spanning subgraph $G'$ of $G$ such that $\delta(G')\geq 2n/3-m$ and $G'$ is $m$-extremal of type 2. Then there exists $G''\subseteq G$ and a partition $A, B, C$ of $V(G'')$ satisfying:
\begin{enumerate}[\rm(R1)]
\item $|A|$, $|B|$ and $|C|$ are even;\label{prop:setup':1}%
\COMMENT{means that we only need to make degrees even in order to get $C_4$-divisibility later on (i.e. $4|e(A,B)$ etc. will be automatic)}
\item $n/3-50m\leq |A|, |B|, |C|\leq n/3+2m$;\label{prop:setup':2}
\item $G- G''$ has a $C_4$-decomposition;\label{prop:setup':3}
\item for each $X\in \{A,B,C\}$ and each $v\in V(G'')\setminus X$, we have $d_{G''}(v, X)\geq n/4$.\label{prop:setup':4}
\end{enumerate}
\end{prop}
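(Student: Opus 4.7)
My plan is to mirror the proof of Proposition~\ref{prop:setup2} almost line by line, with the type-1 structural input ($e_{G'}(S,T) = 0$) replaced by its type-2 counterpart ($S$ and $T$ independent in $G'$). The initial set-up is straightforward: the extremality of $G'$ provides disjoint $S_0, T_0 \subseteq V(G)$ with $|S_0|, |T_0| \geq n/3 - m$ and $e_{G'}(S_0) = e_{G'}(T_0) = 0$, and since every $v \in S_0$ has at least $2n/3 - m$ of its $G'$-neighbours in $V(G) \setminus S_0$, we get $|S_0| \leq n/3 + m$ and symmetrically $|T_0| \leq n/3 + m$. Hence with $R_0 := V(G) \setminus (S_0 \cup T_0)$ all three sets have sizes in $[n/3 - 2m,\, n/3 + 2m]$. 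Moreover, each $v \in S_0$ has at most $2m$ non-$G'$-neighbours in $V(G) \setminus S_0$, so automatically $d_G(v, R_0) \geq n/3 - 4m$ and $d_G(v, T_0) \geq n/3 - 3m$, and symmetrically for vertices of $T_0$. So any vertex that can violate (R\ref{prop:setup':4}) for the target partition $A := S_0$, $B := R_0$, $C := T_0$ must lie in $R_0$.

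The main work is to identify and remove the bad vertices in $R_0$ together with the exceptional edges inside $S_0$ and $T_0$, and to do so by deleting copies of $C_4$ from $G$ so that $G - G''$ becomes $C_4$-decomposable ``for free''. By a double count applied to $e_G(R_0, S_0) \geq |S_0|(|R_0| - 2m)$, only $O(m)$ vertices of $R_0$ can have $d_G(v, S_0)$ less than, say, $n/4 + m^{2/3}$, and likewise for $T_0$; let $R_b$ be the union of these bad vertices, so $|R_b| = O(m)$. I would then replicate the three-stage covering procedure from the proof of Proposition~\ref{prop:setup2}: first, an Erd\H{o}s--Stone greedy $C_4$-stripping of the exceptional subgraph $G[S_0] \cup G[T_0] \cup G[R_b, V(G)]$ down to at most $\eta n^2$ edges, where $1/m \ll \eta \ll 1$; second, a pair-and-two-path step to handle any vertex still carrying $\geq \eta^{1/2} n$ exceptional edges; and third an edge-by-edge pair-and-two-path step for the $O(\eta n^2)$ survivors. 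The hypothesis $\delta(G) \geq \delta(G') \geq 2n/3 - m$ gives any two vertices at least $n/3 - O(m)$ common neighbours throughout, so each pairing step succeeds greedily while using each vertex no more than $m^{2/3}$ times.

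To finish, I would set $V(G'') := V(G) \setminus R_b$ and take the partition $A := S_0$, $B := R_0 \setminus R_b$, $C := T_0$; if any of $|A|, |B|, |C|$ turns out to be odd, I would drop one further vertex from the offending part and cover its incident edges by additional copies of $C_4$ via the same pair-and-two-path procedure (this is where I use that $V(G'')$ is allowed to be a proper subset of $V(G)$). The resulting graph $G''$ then satisfies (R\ref{prop:setup':1}) by the parity adjustment; (R\ref{prop:setup':2}) because only $O(m)$ vertices have been removed from each part; (R\ref{prop:setup':3}) because $G - G''$ is by construction a disjoint union of copies of $C_4$; and (R\ref{prop:setup':4}) because each vertex of $V(G'')$ started with at least $n/4 + m^{2/3}$ neighbours into each relevant class and loses only $m^{2/3}$ of them during the covering stage.

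The main obstacle I expect is the bookkeeping in the covering stage: ensuring that (a) the $O(mn)$ edges incident to $R_b$, together with the exceptional edges in $S_0 \cup T_0$, can all be absorbed into copies of $C_4$ found inside $G$ without over-using any vertex, and (b) the final parity correction does not spoil (R\ref{prop:setup':4}). Since $|R_b| = O(m)$ and the Erd\H{o}s--Stone step dominates the total number of $C_4$s produced, the same per-vertex usage bound as in the proof of Proposition~\ref{prop:setup2} applies, so both concerns should resolve with only cosmetic changes to that argument.
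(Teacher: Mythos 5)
Your overall architecture is the paper's: isolate the $O(m)$ vertices of the third class with low degree into $S_0$ or $T_0$, cover all edges at these (and at the parity-correction vertices) by copies of $C_4$ built from paths of length two, delete those vertices, and read off (R\ref{prop:setup':1})--(R\ref{prop:setup':4}). However, there is a genuine quantitative gap created by your decision to also strip $G[S_0]\cup G[T_0]$ at this stage. Note that only $G'$ is known to have no edges inside $S_0$ and $T_0$; in $G$ each vertex may carry up to $n/3+m$ edges of $G-G'$, so $e_G(S_0)+e_G(T_0)$ can be of order $n^2$. After the Erd\H{o}s--Stone stripping you are still left with up to $\eta n^2$ edges to cover individually by paths, and any family of paths covering $\Theta(\eta n^2)$ edges must use some vertices $\Omega(\eta n)$ times as interior vertices. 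Your claim that ``each pairing step succeeds greedily while using each vertex no more than $m^{2/3}$ times'' is therefore impossible, and this is not cosmetic: you defined the bad set $R_b$ via the threshold $d_G(v,S_0)\geq n/4+m^{2/3}$, so a good vertex of $R_0\setminus R_b$ has only $m^{2/3}$ (a constant) of slack above the $n/4$ required by (R\ref{prop:setup':4}), while the covering stage may remove $\Theta(\eta^{1/3}n)\gg m^{2/3}$ of its edges into $S_0$. As written, (R\ref{prop:setup':4}) can fail.

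There are two clean repairs, and the paper takes the first. Either do not touch $G[S_0]\cup G[T_0]$ here at all --- the proposition does not require $A$ and $C$ to be independent in $G''$, and the paper defers that stripping to the proof of Lemma~\ref{lem:t1ort2}\eqref{type2}; then only the $O(mn)$ edges incident to the $O(m)$ exceptional vertices need covering, and one can spread the two-paths so that every surviving vertex loses only a constant number ($m^2$ in the paper) of edges into each class. Or, if you insist on stripping $G[S_0]\cup G[T_0]$ now, you must define $R_b$ with a threshold that leaves linear slack (the paper uses $5n/18=n/4+n/36$ in the analogous step of Proposition~\ref{prop:setup}) so that a per-vertex loss of $O(\eta^{1/3}n)$ with $\eta\ll 1$ is absorbed. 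With either correction the rest of your argument (the double count bounding $|R_b|$, the parity adjustment by deleting one further vertex per class, and the verification of (R\ref{prop:setup':1})--(R\ref{prop:setup':3})) goes through as in the paper.
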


\noindent Again, $G''$ is not necessarily spanning.

\begin{proof}
Since $G'$ is $m$-extremal of type 2, there exist disjoint sets $A_1, B_1\subseteq V(G)$ such that $|A_1|, |B_1|= \lceil n/3\rceil-m$ and  $e_{G'}(A_1)=e_{G'}(B_1)=0$. Let $C_1:=V(G)\setminus (A_1\cup B_1)$. For all $v\in A_1$, $d_G(v,B_1)\geq n/3-3m$ and $d_G(v,C_1)\geq n/3-1$ since $\delta(G')\geq 2n/3-m$. Likewise, for all $v\in B_1$, $d_G(v,A_1)\geq n/3-3m$ and $d_G(v,C_1)\geq n/3-1$.

Let $C_{1,A}$ consist of all vertices $v\in C_1$ such that $d_G(v,A_1)<5n/18$. By considering $e_{G'}(A_1, C_1)$, we obtain the following bound.
$$|A_1|(n/3-1)\leq |C_{1,A}|5n/18+(n/3+2m-|C_{1,A}|)|A_1|$$
which gives
\begin{equation*}
|C_{1,A}|\leq \frac{(2m+1)|A_1|}{|A_1|-5n/18}\leq \frac{(2m+1)|A_1|}{|A_1|/12}\leq 25m.
\end{equation*}
Similarly, defining $C_{1,B}$ to consist of all vertices $v$ in $C_1$ such that $d_G(v,B_1)<5n/18$, we get $|C_{1,B}|\leq 25m$. Choose at most one further vertex from each of $A_1$, $B_1$ and $C_1\setminus (C_{1,A}\cup C_{1,B})$ so that $|A_1|$, $|B_1|$ and $|C_1\setminus (C_{1,A}\cup C_{1,B})|$ are made even by their removal. Let $U$ be the set which is formed by adding these vertices to $C_{1,A}\cup C_{1,B}$. Then $|U|\leq 50m+3$.

Since any pair of vertices in $G$ has at least $n/4$ common neighbours, we can easily find a collection of edge-disjoint copies of $C_4$ which covers all edges incident at $U$ and uses each vertex in $V(G)\setminus U$ at most $m^2$ times.%
\COMMENT{Enumerate the vertices in $U$ and consider each vertex in $u_i$ turn. Cover all edges incident at $u_i$ by pairing up its neighbours arbitrarily and finding edge-disjoint paths of length two between the pairs. Delete the copies of $C_4$ thus obtained and repeat the process for $u_{i+1}$. We can do this in such a way that the each vertex in $V(G)\setminus U$ is used in at most $m^2$ copies of $C_4$.
Need to find $\leq 51mn/2$ paths. Number of vertices chosen as interior vertex on a path at least $m^2/2$ times $\leq 51mn/m^2n=51/m\ll n/3$. So do not need to use any vertex more than $m^2$ times.}
Write $\cF$ for this collection of copies of $C_4$. Let $G'':=(G-\bigcup \cF)\setminus U$. Together with the partition $A:=A_1\setminus U$, $B:=B_1\setminus U$ and $C:=C_1\setminus U$, this graph satisfies (R\ref{prop:setup':1})--(R\ref{prop:setup':4}).%
\COMMENT{$|C|\geq (n/3+2m-2)-(50m+1)\geq n/3-50m$.}
\end{proof}

We now complete the proof of Lemma~\ref{lem:t1ort2}. The idea is to cover all atypical edges to leave behind a tripartite graph with vertex classes $A,B,C$ and high minimum degree. A little more work produces a graph such that each pair of vertex classes induces a $C_4$-divisible bipartite graph which we can decompose using Theorem~\ref{thm:bipartiteversion}.

\begin{proofof}{Lemma~\ref{lem:t1ort2}\eqref{type2}}
Apply Proposition~\ref{prop:setup'} to find $G_1\subseteq G$ and a partition $A, B, C$ of $V(G_1)$ satisfying (R\ref{prop:setup':1})--(R\ref{prop:setup':4}). The next step is to cover the edges in $G_1':=G_1[A]\cup G_1[B] \cup G_1[C]$ using copies of $C_4$. Let $\eps$ be such that $1/m\ll \eps \ll 1$. Using the Erd\H{o}s-Stone theorem, we may assume that $e(G_1')\leq \eps n^2$ (by greedily removing copies of $C_4$ if necessary).
Let $U\subseteq V(G_1')$ consist of all vertices $v$ such that $d_{G_1'}(v)\geq \eps^{1/2}n$. It is clear that $|U|\leq 2\eps^{1/2}n$.
For each $v\in U$, let $S_v\subseteq N_{G'}(v)\setminus U$ be as large as possible such that $|S_v|$ is even. For each $v\in U$, arbitrarily pair up the vertices in $S_v$ and find edge-disjoint paths of length two in $G_1- G_1'$ which join the pairs (to form copies of $C_4$ together with $v$).
Properties (R\ref{prop:setup':2}) and (R\ref{prop:setup':4}) allow us to do this in such a way that each vertex is used at most $3\eps^{1/2} n$ times.%
\COMMENT{Need to find $\leq \eps n^2$ paths and each vertex is involved in at most $|U|\leq 2\eps^{1/2}n$ of these. Together (R\ref{prop:setup':2}) and (R\ref{prop:setup':4}) imply that common neighbourhood of any pair of vertices in $G_1-G_1'$ has size $>n/10$. So number of vertices used as an interior vertex more than $\eps^{1/2}n$ times $\leq \eps^{1/2}n\ll n/10$.}
Denote the set of edge-disjoint copies of $C_4$ found in this step by $\cF_1$. Let $G_2:=G_1-\bigcup \cF_1$. For each $X\in \{A,B,C\}$ and each $v\notin X$, 
\begin{align}
d_{G_2}(v, X)&\geq n/4-6\eps^{1/2}n\hspace{0.5cm} \text{and}\label{eq:setup':1}\\
\Delta (G_2[A]), \Delta (G_2[B]), \Delta (G_2[C])&\leq \max\{\eps^{1/2}n, |U|+1\} \leq 3\eps^{1/2}n.\label{eq:setup':2}
\end{align}

Now cover each remaining edge in $G_2':=G_2[A]\cup G_2[B] \cup G_2[C]$ by a copy of $C_4$ using a path of length three in $G_2- G_2'$ between its endvertices. We require at most $\eps n^2$ such paths and each vertex is an endvertex of at most $3\eps^{1/2}n$ paths, by \eqref{eq:setup':2}. There are at least $n/10$ possibilities to embed each vertex by \eqref{eq:setup':1} and (R\ref{prop:setup':2}), so we are able to find these paths so that each vertex is used at most $\eps^{1/3} n/3$ times. Remove these copies of $C_4$ and write $G_3$ for the resulting graph. Note that $A$, $B$, $C$ are independent sets in $G_3$ and, for each $X\in \{A,B,C\}$ and each $v\notin X$, 
\begin{equation}\label{eq:setup':3}
d_{G_3}(v, X)\geq n/4-\eps^{1/3} n.
\end{equation}

In this final step, we ensure that each pair of vertex classes induces a $C_4$-divisible graph. Since $G_3$ is $2$-divisible, $e_{G_3}(A, B)$ must be even.%
\COMMENT{$2$-divisible implies $e_{G_3}(C, A\cup B)$ even. We know total number of edges in $G_3$ is even.}
So there is an even number of vertices $v\in A$ such that $d_{G_3}(v, B)$ is odd (note that such $v$ will necessarily also have $d_{G_3}(v, C)$ odd since $G_3$ is $2$-divisible). Pair these odd vertices up arbitrarily and, for each pair $a_1, a_2$, remove one copy of $C_4$ of the form $a_1Ba_2C$ (this changes the parities of $d_{G_3}(a_1, B)$ and $d_{G_3}(a_2, B)$). Each pair has many common neighbours in $B$ and $C$ by \eqref{eq:setup':3}, so we can do this in such a way that each vertex is used at most ten times. Do the same for the vertices in $B$ and $C$ to obtain a graph $G_4$ such that each bipartite graph induced by a pair from $\{A, B, C\}$ is $C_4$-divisible (that the number of edges in these graphs is divisible by four follows from $2$-divisibility and (R\ref{prop:setup':1})). Each of these bipartite graphs has minimum degree at least $n/4-2\eps^{1/3} n$ and (R\ref{prop:setup':4}) implies $$\dbip(G_4[A,B]), \dbip(G_4[A,C]), \dbip(G_4[B,C])\geq 2/3+\eps.$$
So we may apply Theorem~\ref{thm:bipartiteversion} to find $C_4$-decompositions of $G_4[A, B]$, $G_4[A, C]$ and $G_4[B, C]$. This completes our $C_4$-decomposition of $G$.
\end{proofof}

\section{Even cycles of length at least eight}\label{sec:longercycle}

The aim of this section is to prove Theorem~\ref{thm:mainc2k} for even cycles of length at least eight. We will again split our argument into extremal and non-extremal cases. When $G$ is not extremal, it will satisfy an expansion property which we now describe.
Let $G$ be a graph on $n$ vertices. We define the \emph{robust neighbourhood} of a set $S\subseteq V(G)$ to be the set of vertices $R_{\nu,G}(S):=\{v\in V(G): d_G(v,S)\geq \nu n\}$.
We say that a set $S\subseteq V(G)$ is \emph{$\nu$-expanding in $G$} if $|R_{\nu,G} (S)|\geq n/2+\nu n$. We say that $G$ is a \emph{$\nu$-expander} if for every $x\in V(G)$, $N_G(x)$ is $\nu$-expanding.%
\COMMENT{This weaker definition of ``robust expander'' makes it easier to find the vortex.}
Note that every $\nu$-expander $G$ satisfies $\delta(G)\geq \nu n$.

Any graph which is not a $\nu$-expander falls into one of two classes of extremal graph.
We say that a graph $G$ on $n$ vertices is \emph{$\eps$-close to $K_{n/2}\cup K_{n/2}$} if there exists $S\subseteq V(G)$ such that $|S|=\lfloor n/2\rfloor$ and $e(S, \overline S)\leq \eps n^2$.
We say that $G$ is \emph{$\eps$-close to bipartite} if there exists $S\subseteq V(G)$ such that $|S|=\lfloor n/2\rfloor$ and $e(S)\leq \eps n^2$. The following is a weak form of Lemma~26 in \cite{lap}.

\begin{prop}[\cite{lap}]\label{prop:islike}
Let $1/n\ll \nu\ll \eps< 1$. Let $G$ be a graph on $n$ vertices with $\delta(G)\geq n/2$. Then one of the following holds:
\begin{enumerate}[\rm(i)]
\item $G$ is a $\nu$-expander;%
\COMMENT{alternatively: For every $S\subseteq V(G)$ with $|S|=\lceil n/2\rceil$, $|R_{\nu,G}(S)|\geq n/2+\nu n$;}
\item $G$ is $\eps$-close to $K_{n/2}\cup K_{n/2}$;
\item $G$ is $\eps$-close to bipartite.
\end{enumerate}
\end{prop}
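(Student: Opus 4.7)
The plan is to exploit the failure of expansion to locate a vertex $x$ whose neighbourhood $A:=N_G(x)$ already serves as (a slight perturbation of) the partition witnessing (ii) or (iii); which of the two holds will be decided by an analysis of the low-degree set $L:=V(G)\setminus R_{\nu,G}(A)$. Fix such an $x$ (it exists because $G$ is not a $\nu$-expander), write $B:=V(G)\setminus A$, and set $L_A:=L\cap A$, $L_B:=L\cap B$. By hypothesis $|L|>n/2-\nu n$ and $|A|\geq\delta(G)\geq n/2$.

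The first step pins down the sizes of $A$ and $B$. Any $v\in L$ satisfies $d_G(v,A)<\nu n$ and $d_G(v)\geq n/2$, so $d_G(v,B)>n/2-\nu n$; since $d_G(v,B)\leq|B|$ this forces $|B|>n/2-\nu n$, and combined with $|A|\geq n/2$ yields $|A|\in[n/2,n/2+\nu n)$ and $|B|\in(n/2-\nu n,n/2]$. In particular each $v\in L$ is non-adjacent to fewer than $\nu n$ vertices of $B$, and $|L_A|+|L_B|>n/2-\nu n$.

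The main step—and where I expect the technical weight of the argument to lie—is the dichotomy
\[
\min\{|L_A|,|L_B|\}\ \leq\ \sqrt{2\nu}\,n.
\]
To prove it, suppose both $|L_A|,|L_B|\geq\sqrt{2\nu}\,n\geq 2\nu n$. Each $v\in L_A\subseteq A$ is non-adjacent to at most $\nu n$ vertices of $B$, hence adjacent to at least $|L_B|-\nu n\geq|L_B|/2$ vertices of $L_B$, so $e_G(L_A,L_B)\geq|L_A|\,|L_B|/2$. On the other hand, since $L_B\subseteq L$, every $w\in L_B$ has $d_G(w,A)<\nu n$, giving $e_G(L_A,L_B)\leq\sum_{w\in L_B}d_G(w,A)<|L_B|\nu n$. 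Comparing the two bounds forces $|L_A|<2\nu n$, contradicting $|L_A|\geq\sqrt{2\nu}\,n$ for small $\nu$.

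Combined with $|L_A|+|L_B|>n/2-\nu n$, the dichotomy implies that one of $|L_A|,|L_B|$ is at least $n/2-2\sqrt{\nu}\,n$ and the other at most $\sqrt{2\nu}\,n$. If $|L_B|$ is the large one, then $|B\setminus L|\leq 3\sqrt{\nu}\,n$ and
\[
e_G(A,B)\ =\ \sum_{b\in B}d_G(b,A)\ \leq\ |L_B|\nu n+|B\setminus L|\,|A|\ \leq\ 4\sqrt{\nu}\,n^2\ \leq\ \eps n^2/2,
\]
using $\nu\ll\eps$. Since $|B|\leq\lfloor n/2\rfloor$, I would then extend $B$ to a set $S$ of size $\lfloor n/2\rfloor$ by appending at most $\nu n+1$ vertices of $A$; each such addition changes $e_G(S,\bar S)$ by at most $n$, so $e_G(S,\bar S)\leq\eps n^2$, which is (ii). The symmetric case $|L_A|$ large gives $|A\setminus L|\leq 3\sqrt{\nu}\,n$, hence $2e_G(A)\leq|L_A|\nu n+|A\setminus L|\,|A|\leq 4\sqrt{\nu}\,n^2$, so $e_G(A)\leq\eps n^2/2$; passing to any subset $S\subseteq A$ of size $\lfloor n/2\rfloor$ can only decrease $e_G(S)$, yielding (iii).
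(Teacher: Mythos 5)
Your argument is correct. Note first that the paper does not actually prove Proposition~\ref{prop:islike}: it is quoted as a weak form of Lemma~26 in \cite{lap}, so there is no in-paper proof to match against, and what you have written is a self-contained derivation tailored to the (weaker) expander notion used here, namely that only the neighbourhoods $N_G(x)$ need to expand. That weakening is exactly what makes your short argument work: the failure of expansion hands you a single vertex $x$ whose neighbourhood $A=N_G(x)$ is already essentially one side of the desired partition, whereas the robust-expander version in \cite{lap} has to start from an arbitrary non-expanding set. All the steps check out: the degree condition pins $|A|,|B|$ to $n/2\pm\nu n$; the double count of $e_G(L_A,L_B)$ (at least $|L_A||L_B|/2$ from the $B$-side degrees, at most $|L_B|\nu n$ from the definition of $L$) correctly forces $\min\{|L_A|,|L_B|\}\leq\sqrt{2\nu}\,n$; and the two resulting cases give $e_G(A,B)=O(\sqrt{\nu})n^2$ or $e_G(A)=O(\sqrt{\nu})n^2$ respectively, after which the adjustment to a set of size exactly $\lfloor n/2\rfloor$ costs only $O(\nu n^2)$ further edges. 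The hierarchy $\nu\ll\eps$ absorbs all the constants. This is a perfectly acceptable replacement for the citation, and arguably cleaner for the present setting than invoking the full strength of \cite{lap}.
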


The following result, which will be proved in Section~\ref{sec:expanderdecomp}, is a version of Theorem~\ref{thm:witheps} which relies on $\nu$-expansion (instead of solely the minimum degree). This result finds a $C_{2k}$-decomposition of $G$ when $G$ is a $\nu$-expander and $k\geq 4$.

\begin{theorem}\label{thm:nuexpander}
Let $k\in \N$, $k\geq 4$ and $1/n\ll \nu, 1/k$. Let $G$ be a $C_{2k}$-divisible $\nu$-expander on $n$ vertices. If $k=4$, assume further that $\delta(G)\geq n/2$. Then $G$ has a $C_{2k}$-decomposition.
\end{theorem}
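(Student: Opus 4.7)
The strategy is iterative absorption, following the template laid out in the introduction and applied for $k=2$ in the proof of Lemma~\ref{lem:notextrem}. The three ingredients required are a $C_{2k}$-absorber, an expansion-preserving vortex, and an iterative cover that leaves only a small $C_{2k}$-divisible leftover inside the absorbing set.

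\emph{Step 1: path-richness from expansion.} I would first show that a $\nu$-expander $G$ on $n$ vertices contains at least $\nu' n$ internally disjoint paths of length $k-1$ between any two vertices $x, y$, where $\nu' = \nu'(\nu,k) > 0$. Both $N_G(x)$ and $N_G(y)$ are $\nu$-expanding, so their robust neighbourhoods each have size at least $n/2 + \nu n$ and hence intersect in at least $2\nu n$ vertices; each such common vertex is joined to $\Omega(\nu n)$ vertices of $N_G(x)$ and to $\Omega(\nu n)$ of $N_G(y)$, yielding many paths of length three from $x$ to $y$. An easy induction on length, valid because $k$ is constant, extends this to paths of length $k-1$. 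Feeding this into Lemma~\ref{lem:abs:c2k}, for any target set $U \subseteq V(G)$ of constant size $m$ we obtain a $C_{2k}$-divisible absorber $A^* \subseteq G$ with $|A^*| \le 2^{m^2}$.

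\emph{Step 2: an expansion-preserving vortex.} I would construct a nested sequence $V(G) = U_0 \supseteq U_1 \supseteq \dots \supseteq U_\ell = U$ in which each $G[U_i]$ is itself a $(\nu/2)$-expander relative to $|U_i|$. Random subsampling does the job: if $U_{i+1}$ is chosen uniformly at random inside $U_i$ with $|U_{i+1}| = \lfloor \mu |U_i| \rfloor$ for some $\mu \ll \nu$, then Hoeffding's inequality (Lemma~\ref{lem:chernoff}) together with a union bound over vertices and over candidate sets $S \subseteq U_i$ shows that with positive probability the robust neighbourhood property transfers to $G[U_{i+1}]$. Iterating produces the vortex down to $|U_\ell| = m$.

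\emph{Step 3: iterative cover and absorb.} This is the technical heart of the argument and also the main obstacle. With $\delta(G)$ possibly as low as $\nu n$, one cannot simply quote Lemma~\ref{lem:covermost}. Instead, at each level $i$ I would apply the Erd\H{o}s--Stone theorem to obtain an $\eta$-approximate $C_{2k}$-decomposition of $G[U_i] - G[U_{i+1}]$ (valid because $C_{2k}$ is bipartite), leaving at most $\eta n^2$ stray edges outside $G[U_{i+1}]$. Each stray edge $xy$ is then extended to a $C_{2k}$ by routing a path of length $2k-1$ through $U_{i+1}$, using the path-richness of Step~1 applied inside $G[U_{i+1}]$, which inherits expansion from Step~2. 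The routing is performed greedily with careful book-keeping so that the edges of $G[U_{i+1}]$ consumed by the routing leave a $C_{2k}$-divisible remainder. After $\ell$ iterations only a subgraph $H \subseteq G[U_\ell]$ remains, and $A^* \cup H$ has a $C_{2k}$-decomposition by the defining property of $A^*$. The extra hypothesis $\delta(G) \ge n/2$ needed when $k = 4$ enters precisely at this step: with only eight vertices per cycle, the routing and parity-correction arguments demand a global abundance of edges not guaranteed by $\nu$-expansion alone, whereas for $k \ge 5$ the greater length of $C_{2k}$ provides enough slack to carry the argument through with expansion only.
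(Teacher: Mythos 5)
Your overall architecture (expander vortex, absorber via Lemma~\ref{lem:abs:c2k}, iterative cover-and-absorb) is the same as the paper's, but there is a genuine gap in Step~1 that becomes fatal precisely when $k=4$. A common vertex $v$ of $R_{\nu,G}(N_G(x))$ and $R_{\nu,G}(N_G(y))$ yields paths $x,a,v,b,y$ of length \emph{four}, not three: $v$ need not lie in $N_G(x)$ nor be adjacent to $y$. Expansion alone therefore gives many internally disjoint paths of each length $\ell\geq 4$, and your ``induction on length'' only extends upward from that base case. For $k\geq 5$ this is all Lemma~\ref{lem:abs:c2k} requires, since $k-1\geq 4$; but for $k=4$ you need paths of length $3$, i.e.\ edges between $N_G(x)$ and $N_G(y)$, which $\nu$-expansion does not supply. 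This is exactly where the paper uses the extra hypothesis $\delta(G)\geq n/2$: one picks $a\in N_G(x)\cap R_{\nu,G}(N_G(y))$, which is nonempty because $|N_G(x)|\geq n/2-o(n)$ while $|R_{\nu,G}(N_G(y))|\geq n/2+\nu n$, and then $b\in N_G(a,N_G(y))$. Consequently your Step~3 explanation of the $k=4$ hypothesis is misplaced: the paper's covering lemma (Lemma~\ref{lem:nearoptimal}, built on Lemma~\ref{lem:coverdown}) works for all $k\geq 3$ under expansion alone, and the minimum degree condition is needed only to construct the absorber.

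Two smaller points. In Step~2 you cannot union-bound over all candidate sets $S\subseteq U_i$ (there are exponentially many); the expander vortex only requires that the specific sets $N_G(x,U_i)$ be expanding, and Proposition~\ref{prop:randomsubset} verifies this with a union bound over pairs of vertices. In Step~3, your direct routing of each stray edge through $U_{i+1}$ is plausible for cycles, but the parity book-keeping you allude to is handled in the paper not by adjusting which edges of $G[U_{i+1}]$ are consumed, but by using $2\mid d(w)$ for each vertex $w$ outside $U_{i+1}$ to pair up its remaining neighbours and close each pair with a path of length $2k-2$ inside $U_{i+1}$; the final leftover in $U_\ell$ is then automatically $C_{2k}$-divisible because only edge-disjoint copies of $C_{2k}$ have been removed from a $C_{2k}$-divisible graph. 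You would also need to record that the routing contributes only $O(\gamma n)$ to the degree of each vertex of $U_{i+1}$, so that the vortex property survives to the next level.
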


Given Theorem~\ref{thm:nuexpander}, it remains to find decompositions of graphs which are close to $K_{n/2}\cup K_{n/2}$ or close to bipartite. This is achieved in the current section. Theorem~\ref{thm:mainc2k} for $k\geq 4$ will then follow directly from Proposition~\ref{prop:islike}, Theorem~\ref{thm:nuexpander}, Lemma~\ref{lem:closeclique} and Lemma~\ref{lem:closebip}.

\subsection{$G$ is close to $K_{n/2}\cup K_{n/2}$}

The next result finds a $C_{2k}$-decomposition when $G$ is close to $K_{n/2}\cup K_{n/2}$. The idea of the proof is to exploit the fact that $G$ resembles two disjoint cliques: first dealing with any unusual edges or exceptional vertices and then using Theorem~\ref{thm:witheps} to decompose the (almost) cliques.

\begin{lemma}\label{lem:closeclique}
Let $k\in \N$, $k\geq 4$ and $1/n\ll \eps\ll 1$. Suppose that $G$ is a $C_{2k}$-divisible graph on $n$-vertices and $\delta(G)\geq n/2$. Suppose further that $G$ is $\eps$-close to $K_{n/2}\cup K_{n/2}$. Then $G$ has a $C_{2k}$-decomposition.
\end{lemma}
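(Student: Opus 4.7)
The plan is to reduce the problem to two applications of Theorem~\ref{thm:witheps}, one to each of the two nearly-complete halves. Let $S_1 := S$ and $S_2 := V(G) \setminus S$; by hypothesis $|S_1|, |S_2| \approx n/2$ and $e_G(S_1, S_2) \leq \eps n^2$. Fix $\eps'$ with $\eps \ll \eps' \ll 1$. The core task is to cover, by edge-disjoint copies of $C_{2k}$, the sparse bipartite graph $G[S_1, S_2]$ together with all edges incident to a small set of atypical vertices, so that what remains is a vertex-disjoint union $H_1 \cup H_2$ with $H_i \subseteq G[S_i]$, each $H_i$ being $C_{2k}$-divisible and satisfying $\delta(H_i) \geq (1/2 + \eps')|H_i|$. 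Theorem~\ref{thm:witheps} applied to each $H_i$ then finishes the proof.

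First I would identify the atypical vertices $U_i := \{v \in S_i : d_G(v, S_{3-i}) \geq \eps^{1/2} n\}$. A double-counting argument with $e(S_1, S_2) \leq \eps n^2$ yields $|U_i| \leq 2\eps^{1/2} n$. Since $\delta(G) \geq n/2$, each $v \in S_i \setminus U_i$ has at least $n/2 - \eps^{1/2} n$ neighbours inside $S_i \setminus U_i$, so both induced subgraphs $G[S_i \setminus U_i]$ are almost complete.

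Next I would perform the covering step. Each cross-edge $uv$ with $u \in S_1$, $v \in S_2$ can be covered by a copy of $C_{2k}$ using exactly two cross-edges: pick paths of length $k-1$ starting at $v$ in the near-clique $G[S_2 \setminus U_2]$ and ending at $u$ in $G[S_1 \setminus U_1]$, connected by a second cross-edge. Since $G$ is $2$-divisible, $e_G(S_1, S_2)$ is even, so cross-edges can be paired up and covered this way. Similarly, for each $v \in U_1 \cup U_2$, pair up its neighbours (an even number by $2$-divisibility) and join each pair by a path of length $2k - 2$ through the dense near-cliques, forming a $C_{2k}$ through $v$. The density of the near-cliques allows all this to be done greedily so that every vertex participates in at most $\eps^{1/3} n$ cover cycles, preserving the minimum-degree surplus.

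After the covering, the remaining graph is $H_1 \cup H_2$, vertex-disjoint with $H_i \subseteq G[S_i \setminus U_i]$. Since each removed cycle contributes an even amount to every vertex degree and no edges remain between the two sides, each $H_i$ is automatically $2$-divisible. For $e(H_i) \equiv 0 \pmod{2k}$, the key observation is that $e(H_1) + e(H_2) \equiv 0 \pmod{2k}$, so it suffices to fix $e(H_1)$; this is achieved by tuning the internal path-length splits in a few cover cycles (e.g.\ changing $(k-1, k-1)$ to $(k, k-2)$ shifts $e(H_1)$ by $-1$), giving any desired residue. The resulting $H_i$ are $C_{2k}$-divisible graphs on roughly $n/2$ vertices with $\delta(H_i) \geq (1/2 + \eps')|H_i|$, so Theorem~\ref{thm:witheps} decomposes each. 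The main obstacle is the bookkeeping of the covering: simultaneously covering every bad edge, preserving the minimum-degree surplus, and engineering the residue $e(H_1) \pmod{2k}$ via the path-length adjustments while maintaining $2$-divisibility.
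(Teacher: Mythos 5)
Your proposal follows essentially the same route as the paper: split $V(G)$ into the two near-cliques, cover the cross-edges and the edges at the few atypical vertices by copies of $C_{2k}$ each using exactly two (or zero extra) cross-edges, fix $e(H_1)\pmod{2k}$ by varying how the $2k-2$ internal edges of some cover cycles are split between the two sides, and finish with Theorem~\ref{thm:witheps} on each side; this is precisely the paper's combination of Propositions~\ref{prop:cliquepart}, \ref{prop:c2kcliqueedges} and \ref{prop:c2kcliqueclearing}. The one step you assert rather than prove is that at least $2k$ cover cycles with the flexible ``two vertex-disjoint cross-edges'' structure actually exist: when $|S_1|=|S_2|=n/2$ you only know $\delta(G[S_1,S_2])\geq 1$, so the cross-edges could be highly overlapping (e.g.\ a double star), and showing one can still extract $2k$ pairs of vertex-disjoint cross-edges (with endpoints of near-full internal degree, so the short internal paths exist) is exactly the content of the paper's Proposition~\ref{prop:c2kcliqueedges}; this is a fixable but genuinely non-trivial detail that your sketch should not omit.
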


We will prove Lemma~\ref{lem:closeclique} in stages.

\begin{prop}\label{prop:cliquepart}
Let $1/n\ll\eps\ll 1$. Suppose that $G$ is a graph on $n$ vertices with $\delta(G)\geq n/2$ which is $\eps$-close to $K_{n/2}\cup K_{n/2}$. Then there exists a partition $A,B$ of $V(G)$ such that:
\begin{enumerate}[\rm(S1)]
\item $\delta(G[A]), \delta(G[B])\geq n/5$;\label{cliquepart1}
\item for all but at most $2\sqrt\eps n$ vertices $v\in A$, $d_G(v, A)\geq n/2-2\sqrt\eps n$;\label{cliquepart2}
\item for all but at most $2\sqrt\eps n$ vertices $v\in B$, $d_G(v, B)\geq n/2-2\sqrt\eps n$;\label{cliquepart3}
\item $n/2-4\eps n\leq |A|\leq |B|\leq n/2+4\eps n$.\label{cliquepart4}
\end{enumerate}
\end{prop}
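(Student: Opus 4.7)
The plan is to begin with the near-partition $V(G) = S \cupdot T$ provided by the $\eps$-closeness hypothesis --- so $|S| = \lfloor n/2 \rfloor$ and $e_G(S,T) \leq \eps n^2$ --- and refine it by moving each vertex to the side in which it has at least as many neighbours. Explicitly, I would set
\[
A_0 := \{v \in V(G) : d_G(v,S) \geq d_G(v,T)\}, \qquad B_0 := V(G)\setminus A_0,
\]
and then take $(A,B) = (A_0,B_0)$, relabelling the two sides if necessary so that $|A|\leq|B|$.

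The whole argument hinges on a single counting observation. Every $v \in S\setminus A$ satisfies $d_G(v,T) > d_G(v,S)$ and therefore $d_G(v,T) \geq d_G(v)/2 \geq n/4$, so
\[
|S\setminus A|\cdot n/4 \;\leq\; e_G(S,T) \;\leq\; \eps n^2,
\]
giving $|S\setminus A|\leq 4\eps n$; symmetrically, $|T\cap A|, |T\setminus B|, |S\cap B| \leq 4\eps n$. Since $|A|=|S|-|S\setminus A|+|T\cap A|$, this immediately yields (S\ref{cliquepart4}) once we invoke $|A|\leq|B|$ (the harmless rounding $|S|=\lfloor n/2\rfloor$ is absorbed by the slack because $1/n \ll \eps$).

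To verify (S\ref{cliquepart1}), any $v \in A$ has $d_G(v,S) \geq d_G(v)/2 \geq n/4$ by the definition of $A$, and losing at most $|S \setminus A| \leq 4\eps n$ neighbours in the passage from $S$ to $A$ gives $d_G(v,A) \geq n/4 - 4\eps n \geq n/5$; the argument for $B$ is symmetric. For (S\ref{cliquepart2}) (and (S\ref{cliquepart3}) by symmetry), I would call $v \in S$ \emph{atypical} if $d_G(v,T) \geq \sqrt{\eps} n$; then the number of atypical vertices in $S$ is at most $\sqrt{\eps} n$, since otherwise $e_G(S,T) > \eps n^2$. For any $v \in A\cap S$ which is not atypical,
\[
d_G(v,A) \;\geq\; d_G(v,S) - |S\setminus A| \;\geq\; \bigl(d_G(v) - \sqrt{\eps}\, n\bigr) - 4\eps n \;\geq\; n/2 - 2\sqrt{\eps}\, n,
\]
so the vertices of $A$ violating (S\ref{cliquepart2}) lie in $(A\cap T) \cup \{\text{atypical }v \in S\}$, whose size is at most $4\eps n + \sqrt{\eps}\, n \leq 2\sqrt{\eps}\, n$.

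The whole statement is essentially bookkeeping and I do not anticipate any real obstacle. The two relevant pigeonhole bounds on $e_G(S,T)$ --- the weak $e_G(S,T)/(n/4)$, which controls by how much the refinement can shift $S,T$ and hence secures (S\ref{cliquepart1}) and (S\ref{cliquepart4}), and the stronger $e_G(S,T)/(\sqrt{\eps}\, n)$, which bounds the number of atypical vertices and hence delivers (S\ref{cliquepart2}) and (S\ref{cliquepart3}) --- together give all four conclusions; the only thing to watch is that the $|A|\leq|B|$ relabelling is compatible with the stated inequalities in (S\ref{cliquepart4}), which it is once $\eps$ is small enough relative to $1/n$.
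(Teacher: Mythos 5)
Your proof is correct and takes essentially the same route as the paper: both refine the given near-bipartition $S,T$ by relocating the misplaced vertices and bounding everything against $e_G(S,T)\leq\eps n^2$ (the weak count controlling how many vertices move, the $\sqrt\eps n$ count identifying the exceptional vertices for (S2)/(S3)), the only difference being that you move vertices by majority rule while the paper uses fixed degree thresholds $11n/50$ and $n/2-\sqrt\eps n$. The one point to watch is that your bound $|S\setminus A|\leq 4\eps n$ exactly matches the constant in (S4), so for odd $n$ the rounding $|S|=\lfloor n/2\rfloor$ could cost half a vertex there (the paper's threshold gives $25\eps n/7<4\eps n$ and hence genuine slack); this is a one-vertex discrepancy with no effect on any later use of the proposition.
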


\begin{proof}
Let $S\subseteq V(G)$ such that $|S|=\lfloor n/2\rfloor$ and $e(S, \overline S)\leq \eps n^2$. Let $T:=\overline S$. For each $p\in \{11n/50, n/2-\sqrt\eps n\}$, let $S_p:=\{v\in S: d_G(v,S)\leq p\}$ and define $T_p$ similarly.
We have $|S_p|, |T_p|\leq \frac{\eps n^2}{n/2-p}$, so that
\begin{align*}
|S_{11n/50}|, |T_{11n/50}|\leq 25\eps n/7\;\text{ and }\;|S_{n/2-\sqrt\eps n}|, |T_{n/2-\sqrt\eps n}|\leq \sqrt \eps n.
\end{align*}
Let $S':=(S\setminus S_{11n/50})\cup T_{11n/50}$. Setting $A$ to be the smallest of $S'$ and $\overline{S'}$ and setting $B:=\overline A$ gives the desired partition.
\end{proof}

Before we begin decomposing $G$, we must reserve some edges between $A$ and $B$ using the following simple proposition. These edges will be used at a later stage to ensure that the graphs on $A$ and $B$ are $C_{2k}$-divisible.

\begin{prop}\label{prop:c2kcliqueedges}
Let $k\in \N$ and $1/n\ll\eps\ll 1/k$. Suppose that $G$ is a graph on $n$ vertices with $\delta(G)\geq n/2$ and $A,B$ is a partition of $V(G)$ satisfying \textup{(S\ref{cliquepart1})--(S\ref{cliquepart4})}.
Then there exist $4k$ distinct edges $e_1, \dots e_{2k}, f_1, \dots f_{2k} \in E_G(A,B)$ such that, for each $1\leq i\leq 2k$, $e_i$ and $f_i$ are vertex-disjoint and $d_G(a_i,A)\geq n/2-2\sqrt\eps n$ where $a_i:=V(e_i)\cap A$. 
\end{prop}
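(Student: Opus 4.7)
The plan is first to observe that $|E_G(A,B)|\geq |A|\geq n/2-4\eps n$: for each $v\in A$ the bound $|A|\leq |B|$ from (S\ref{cliquepart4}) yields $|A|\leq n/2$, whence $d_G(v,B)\geq n/2-(|A|-1)\geq 1$. Write $A_{\mathrm{typ}}$ for the set of $A$-vertices typical in the sense of (S\ref{cliquepart2}); then $|A_{\mathrm{typ}}|\geq n/2-3\sqrt\eps n$ and each such vertex has a neighbour in $B$.

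Next, I would apply K\"onig's theorem to the bipartite graph $H:=G[A_{\mathrm{typ}},B]$. If $H$ admits a matching of size $4k$, split it into $2k$ ordered pairs to define the $e_i,f_i$ at once. Otherwise $H$ has a vertex cover $C=C_A\cupdot C_B$ with $|C|<4k$; one checks that $C_B\neq\emptyset$ (else any typical $v\notin C_A$ would have $d_G(v,B)=0$), and an averaging bound gives a vertex $b^*\in C_B$ with $d_G(b^*,A_{\mathrm{typ}})\geq n/(10k)$. Set $e_i:=a_ib^*$ for $2k$ distinct $a_i\in N_G(b^*)\cap A_{\mathrm{typ}}$; the remaining task is to find distinct $f_i\in E_G(A,B)$ vertex-disjoint from $\{a_i,b^*\}$.

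If $A\setminus A_{\mathrm{typ}}\neq\emptyset$, any $v^*\in A\setminus A_{\mathrm{typ}}$ satisfies $d_G(v^*,B)\geq n/2-d_G(v^*,A)>2\sqrt\eps n$, so $f_i:=v^*b_i'$ for distinct $b_i'\in N_G(v^*)\cap B\setminus\{b^*\}$ works (vertex-disjointness and distinctness from the $e_j$'s follow since $v^*\notin A_{\mathrm{typ}}\ni a_j$ and $b_i'\neq b^*$). If instead $A=A_{\mathrm{typ}}$, then $C$ covers the whole of $E_G(A,B)$; taking any $b\in B\setminus C_B$ and applying $\delta(G)\geq n/2$ together with (S\ref{cliquepart4}) forces $|B|\geq n/2+1$, hence $|A|\leq n/2-1$ and $d_G(v,B)\geq 2$ for every $v\in A$. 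This in turn forces $|C_B|\geq 2$, and a second averaging argument produces $b'\in C_B\setminus\{b^*\}$ with $d_G(b',A)\geq n/(10k)$, from which $f_i:=a_i'b'$ for distinct $a_i'\in N_G(b')\cap A\setminus\{a_i\}$ completes the construction. The main obstacle is exactly this case analysis: the degenerate ``single-star'' configuration, in which essentially all cross-edges pass through one vertex of $B$, would leave no room for vertex-disjoint $f_i$, and it is the combination of $\delta(G)\geq n/2$ with the near-balance $|A|,|B|\approx n/2$ from (S\ref{cliquepart4}) that rules this out and forces the cover to spread its edges over at least two $B$-vertices.
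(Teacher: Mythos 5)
Your reduction to K\"onig's theorem is a legitimate alternative to the paper's maximality argument, and the first two branches are sound: a matching of size $4k$ in $G[A_{\mathrm{typ}},B]$ immediately yields the pairs, and if some $v^*\in A\setminus A_{\mathrm{typ}}$ exists it has $d_G(v^*,B)>2\sqrt\eps n$ and serves as the hub for the $f_i$. The gap is in the final case $A=A_{\mathrm{typ}}$. The assertion that choosing $b\in B\setminus C_B$ ``forces $|B|\geq n/2+1$'' is false: such a $b$ only satisfies $d_G(b,A)\leq|C_A|<4k$, hence $d_G(b,B)\geq n/2-4k$ and $|B|\geq n/2-4k+1$, which is perfectly compatible with $|A|=|B|=n/2$. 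Concretely, take $n$ even, $|A|=|B|=n/2$, $G[A]$ and $G[B]$ complete, and $E_G(A,B)=\{ab^*:a\in A\}\cup\{a^*b:b\in B\}$ for fixed $a^*\in A$, $b^*\in B$. Then $\delta(G)=n/2$, (S\ref{cliquepart1})--(S\ref{cliquepart4}) hold, every vertex of $A$ is typical, the maximum matching in $G[A,B]$ has size $2$, and the minimum cover is $\{a^*,b^*\}$, so $|C_B|=1$. Your construction, which insists on finding the second hub $b'$ inside $C_B$, cannot be carried out, even though the proposition holds for this graph (via $f_i:=a^*b_i$). In short, the ``single-star'' degeneracy you identify as the main obstacle is ruled out incorrectly: when $|A|=|B|=n/2$ the second star may be centred in $A$ rather than $B$.

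The repair stays within your framework. When $|A|=|B|=n/2$ (the only case where $d_G(v,B)\geq 2$ can fail), every $b\in B$ satisfies $d_G(b,A)\geq n/2-(|B|-1)\geq 1$, and for $b\in B\setminus C_B$ these edges are covered by $C_A$; averaging gives $a^*\in C_A$ with $d_G(a^*,B)\geq n/(10k)$. Choosing the $a_i\in N_G(b^*)\cap A_{\mathrm{typ}}\setminus C_A$ (possible since $d_G(b^*,A_{\mathrm{typ}})\geq n/(10k)\gg k$) and setting $f_i:=a^*b_i$ for distinct $b_i\in N_G(a^*,B)\setminus\{b^*\}$ completes that subcase. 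For comparison, the paper sidesteps this case analysis entirely: it takes a maximal family of pairs and shows that if it has fewer than $2k$ members, then either some uncovered vertex has a second cross-neighbour (extending the family) or the two ``crossing'' edges $ab'$ and $a'b$ must both be present, which again extends it.
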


\begin{proof}
If $|A|<n/2$, each vertex in $A$ has at least two neighbours in $B$ so the result is clear.
So we assume that $|A|=|B|=n/2$, in which case $\delta(G[A,B])\geq 1$. Suppose that the proposition is false and let $\ell<2k$ be maximal such that $G$ contains edges $e_1, \dots e_{\ell}, f_1, \dots f_{\ell} \in E_G(A,B)$ such that, for each $1\leq i\leq \ell$, $e_i$ and $f_i$ are vertex-disjoint and $d_G(a_i,A)\geq n/2-2\sqrt\eps n$ where $a_i:=V(e_i)\cap A$.

Let $U:=\bigcup_{i=1}^{\ell}V(e_i\cup f_i)$, $A':=A\setminus U$ and $B':=B\setminus U$. Choose any vertex $a\in A'$ such that $d_G(a,A)\geq n/2-2\sqrt\eps n$ and let $b\in N_G(a,B)$. Let $a'\in A'\setminus \{a\}$ and $b'\in B'\setminus \{b\}$. If $a'b''\in E(G)$ for some $b''\neq b$, we can take $e_{\ell+1}:=ab$ and $f_{\ell+1}=a'b''$, contradicting the maximality of $\ell$. Since $d_G(a',B)\geq 1$, we must have $a'b\in E(G)$. Similarly, $ab'\in E(G)$. But then taking $e_{\ell+1}:=ab'$ and $f_{\ell+1}=a'b$ gives a contradiction.
\end{proof}

The next result covers the remaining edges between $A$ and $B$.

\begin{prop}\label{prop:c2kcliqueclearing}
Let $k\in \N$, $k\geq 4$ and $1/n\ll \eta\ll\eps\ll 1/k$. Suppose that $G$ is a graph on $n$ vertices and $A,B$ is a partition of $V(G)$ satisfying \textup{(S\ref{cliquepart1})--(S\ref{cliquepart4})}.
Suppose that $e_G(A,B)$ is even.
Then there exists a $C_{2k}$-decomposable graph $H\subseteq G$ such that $G[A,B]\subseteq H$ and $\Delta (H[A]), \Delta (H[B])\leq \eta n$.
\end{prop}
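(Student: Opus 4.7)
My plan is to cover the edges of $G[A,B]$ using a carefully chosen collection of edge-disjoint copies of $C_{2k}$ in $G$, controlling the number of edges each cycle contributes to $H[A]$ and $H[B]$. First I would bound $e_G(A,B)$: properties (S\ref{cliquepart2})--(S\ref{cliquepart4}) force every vertex of $A$ except at most $2\sqrt\eps n$ exceptional ones to have at most $6\sqrt\eps n$ neighbours in $B$ (since $|A|+|B|\le n$ and $d_G(v,A)\ge n/2-2\sqrt\eps n$ forces $d_G(v,B) \le |B| - (n/2-2\sqrt\eps n) + (|A|-1-d_G(v,A)) \le O(\sqrt\eps n)$), and symmetrically for $B$. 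A short double-count then gives $e_G(A,B)\le \eps' n^2$ where $\eps':=\eps^{1/3}$ (which plays the role of an intermediate constant between $\eps$ and $\eta$).

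Next I split the cross edges according to whether an endpoint is heavy. Let $A_H:=\{v\in A: d_G(v,B)>\eta n/4\}$ and $B_H:=\{v\in B: d_G(v,A)>\eta n/4\}$; by the previous bound, $|A_H|,|B_H|\le 4\eps' n/\eta \le \sqrt\eps n$. The key observation is that simply pairing cross edges would force $d_{H[A]}(v)\ge d_G(v,B)$ for every $v\in A$, which is too large when $v\in A_H$. To avoid this I use, for each $v\in A_H$, \emph{trapping cycles} of the form
\[
v\text{--}b_1\text{--}u_1\text{--}u_2\text{--}\cdots\text{--}u_{2k-3}\text{--}b_2\text{--}v,
\]
where $b_1,b_2\in N_G(v,B)$ and $u_1,\dots,u_{2k-3}$ is a path of length $2k-3$ in $G[B]$ from $b_1$ to $b_2$. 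Such a cycle uses two cross edges incident to $v$ and contributes $0$ to $d_{H[A]}(v)$. I would pair up the cross edges at $v$ (leaving at most one leftover if $d_G(v,B)$ is odd) and greedily produce such a cycle for each pair; the paths of length $2k-2$ in $G[B]$ exist by (S\ref{cliquepart1}) and (S\ref{cliquepart3}) since any two typical vertices of $B$ have at least $n/3$ common neighbours in $B$, with room to spare to dodge previously used vertices. The symmetric construction handles $B_H$. Because $e_G(A,B)$ is even and each heavy vertex leaves at most one leftover cross edge, the number of leftovers has the same parity as $0$, so all leftovers can be included among the cross edges handled in the next phase.

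For the remaining cross edges (those incident only to non-heavy vertices, together with the leftovers), I pair them up arbitrarily. For each pair $(ab,a'b')$ I find a path $P_A$ of length $k-1$ in $G[A]$ from $a$ to $a'$ and a path $P_B$ of length $k-1$ in $G[B]$ from $b$ to $b'$, using fresh internal vertices. Since $k\ge 4$, each path has at least two internal vertices, which I pick one at a time; at each step, (S\ref{cliquepart2}) and (S\ref{cliquepart4}) imply that any two typical vertices of $A$ share at least $n/3$ common neighbours in $A$, so I can always pick an internal vertex that is non-exceptional and is not yet overloaded (i.e.\ has current degree less than $\eta n/4$ in the partially constructed $H[A]$). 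The edge $ab\cup bb'\cdots\cup ba'\cup a'\cdots a$ then forms the desired $C_{2k}$.

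The main bookkeeping task is controlling $\Delta(H[A])$ and $\Delta(H[B])$. For vertices in $A_H$, the trapping-cycle construction guarantees no contribution to $d_{H[A]}$, and their cross edges use only edges of $G[B]$; symmetrically for $B_H$. For non-heavy $v\in A$, being an endpoint of the paired-pair cycles contributes exactly $d_G(v,B)\le \eta n/4$ to $d_{H[A]}(v)$, while being an internal vertex contributes at most $\eta n/2$ more, since the total number of edges placed in $H[A]$ is at most $(k-1)e_G(A,B)/2\le k\eps' n^2$ and one can keep each vertex's load below $\eta n/4$ by averaging (there are $|A|\ge n/3$ candidate internal vertices at each step). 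The analogous argument for $B$ gives $\Delta(H[B])\le \eta n$, which is what the proposition requires. The hardest step is the bookkeeping in the last paragraph, ensuring that the trapping cycles for $A_H$ and the pairing cycles both use edges of $G[B]$ without collectively overloading any vertex of $B$; but since the total $B$-edge count from both phases is $O(k\eps' n^2)$ and $\eps'\ll\eta/k$, an averaging argument together with the abundant common-neighbourhood guarantees makes this work.
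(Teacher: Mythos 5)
There is a genuine gap at the very first step, and the rest of your argument depends on it. You claim that (S\ref{cliquepart2})--(S\ref{cliquepart4}) force $e_G(A,B)\le \eps^{1/3}n^2$, arguing that $d_G(v,A)\ge n/2-2\sqrt\eps n$ forces $d_G(v,B)=O(\sqrt\eps n)$. This is false: the properties (S\ref{cliquepart1})--(S\ref{cliquepart4}) give only \emph{lower} bounds on degrees and say nothing that caps $d_G(v)=d_G(v,A)+d_G(v,B)$ from above. The complete graph $K_n$ with $A,B$ of size $n/2$ satisfies (S\ref{cliquepart1})--(S\ref{cliquepart4}) and has $e_G(A,B)=n^2/4$. (In the eventual application the partition does come from a graph close to $K_{n/2}\cup K_{n/2}$, so the cross graph happens to be sparse there, but the proposition must be proved from its stated hypotheses, and your derivation of sparseness is not valid.) Without this bound, your bookkeeping collapses: the set of heavy vertices $A_H$ can have size $\Omega(n)$, and the total number of edges your pairing phase places into $H[A]$ is of order $k\cdot e_G(A,B)=\Theta(kn^2)$, so no averaging argument can keep $\Delta(H[A])$ below $\eta n$.

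The missing idea is the paper's opening move: before doing anything else, apply the Erd\H{o}s--Stone theorem greedily to the bipartite graph $G[A,B]$ \emph{itself}, removing copies of $C_{2k}$ that lie entirely inside $G[A,B]$ until at most $\eta^4 n^2$ cross edges remain. These cycles contribute nothing to $H[A]$ or $H[B]$, and afterwards the leftover cross graph is genuinely sparse, at which point your trapping cycles for the (now few) heavy vertices and your pairing of the remaining cross edges via paths of length $k-1$ in $G[A]$ and $G[B]$ go through essentially as in the paper. One further small point: when the two cross edges in a pair share an endpoint (say $a=a'$), a path of length $k-1$ in $G[A]$ from $a$ to itself does not exist; this case needs the separate treatment the paper gives it, namely a single path of length $2k-2$ in the opposite class.
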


\begin{proof}
Let $G':=G[A,B]$. Use the Erd\H{o}s-Stone theorem to greedily find an $\eta^4$-approximate $C_{2k}$-decomposition $\cF$ of $G'$ and let $H_0:=\bigcup \cF$. Let $X_A:=\{v\in A:d_{G'-H_0}(v)\geq \eta^2 n\}$ and note that $|X_A|\leq \eta^4n^2/(\eta^2n)=\eta^2n$. For each vertex $x\in X_A$, pair up the vertices in $N_{G'-H_0}(x)$ arbitrarily, leaving at most one vertex unpaired. Find edge-disjoint paths of length $2k-2$ in $G[B]$ between each of the pairs (to obtain copies of $C_{2k}$ which cover all but at most one of the edges incident at $x$ in $G'-H_0$). Properties (S\ref{cliquepart1}), (S\ref{cliquepart3}) and (S\ref{cliquepart4}) allow us to find these paths so that each vertex appears as an interior vertex on at most $\eta^3n$ of the paths.%
\COMMENT{To find a path of length $2k-2$ between $x$ and $y$ in $B$, know that $d_G(x,B), d_G(y,B)\geq n/5$ and both of these neighbourhoods contain many vertices of degree $n/2-2\sqrt\eps n$. So greedily find paths, always have choice from at least $n/10$ vertices. $\eta^4n^2k/\eta^3n\ll n/10$. So we can disregard all vertices which have been used at least $\eta^3n$ times.}
Let $H_A$ denote the $C_{2k}$-decomposable graph thus obtained and repeat the process for the set of vertices $X_B:=\{v\in B:d_{G'-H_0-H_B}(v)\geq \eta^2 n\}$, obtaining a $C_{2k}$-decomposable graph $H_B$ which covers all but at most one edge incident at each $x\in X_B$.
Now $H':=H_0\cup H_A\cup H_B$ is $C_{2k}$-decomposable, $\Delta (G[A,B]-H')\leq \eta^2 n$ and 
$$\Delta (H'[A]), \Delta (H'[B])\leq 2\eta^3n+\eta^2n\leq 2\eta^2 n.$$

Since $e_G(A,B)$ and $e_{H'}(A,B)$ are even, so is $e_{G-H'}(A,B)$. Pair up the edges in $E_{G-H'}(A,B)$ arbitrarily and complete each to a copy of $C_{2k}$ as follows. If the two edges share an endvertex, in $A$ say, find a path of length $2k-2$ between their endpoints in $B$ as above. If the edges are disjoint, find paths of length $k-1\geq 3$ between their endpoints in $A$ and in $B$. Again, properties (S\ref{cliquepart1})--(S\ref{cliquepart4}) allow us to find these paths
so that they are edge-disjoint and each vertex appears as an interior vertex on at most $\eta^3n$ of the paths. Let $H''$ denote the $C_{2k}$-decomposable graph obtained in this way. We have ensured that 
$$\Delta(H''[A]), \Delta(H''[B])\leq \Delta (G[A,B]-H')+2\eta^3n\leq 2\eta^2 n.$$

Finally, let $H:=H'\cup H''$. This graph is $C_{2k}$-decomposable, $$\Delta(H[A]), \Delta(H[B])\leq 4\eta^2 n\leq \eta n$$ and $G[A,B]\subseteq H$.
\end{proof}

We combine the previous results to find a $C_{2k}$-decomposition when $G$ is $\eps$-close to $K_{n/2}\cup K_{n/2}$.

\begin{proofof}{Lemma~\ref{lem:closeclique}}
Choose a constant $\eta$ such that $1/n\ll \eta\ll \eps$. 
Apply Proposition~\ref{prop:cliquepart} to obtain a partition $A,B$ of $G$ satisfying (S\ref{cliquepart1})--(S\ref{cliquepart4}). Then apply Proposition~\ref{prop:c2kcliqueedges} to reserve edges $\cE:=\{e_1, \dots, e_{2k}, f_1, \dots, f_{2k}\}$. Let $G':=G-\bigcup \cE$ and note that $G'$ with the partition $A,B$ still satisfies (S\ref{cliquepart1})--(S\ref{cliquepart4}) of Proposition~\ref{prop:cliquepart}. Since $G$ is $2$-divisible, $e_G(A,B)$ is even, so $e_{G'}(A,B)=e_G(A,B)-4k$ is also even. So we can apply Proposition~\ref{prop:c2kcliqueclearing} to find $H\subseteq G'$ which has a $C_{2k}$-decomposition $\cF_1$ such that $G'[A,B]\subseteq H$ and $\Delta (H[A]), \Delta (H[B])\leq \eta n$.

We have covered all edges in $G[A,B]$ apart from those in $\cE$, which we will use to ensure that $(G-H)[A]$ and $(G-H)[B]$ are $C_{2k}$-divisible. To this end, let $0\leq r\leq 2k-1$ be chosen such that $e_{G-H}(A)\equiv r \mod{2k}$. We will find $2k$ copies of $C_{2k}$, each containing a pair $e_i,f_i$, as follows. For each $1\leq i\leq 2k-r$, find a path of length $2$ between the endpoints of $e_i$ and $f_i$ in $(G-H)[A]$ and a path of length $2k-4$ between the endpoints of $e_i$ and $f_i$ in $(G-H)[B]$. For each $2k-r< i\leq 2k$, find a path of length $3$ between the endpoints of $e_i$ and $f_i$ in $(G-H)[A]$ and a path of length $2k-5$ between the endpoints of $e_i$ and $f_i$ in $(G-H)[B]$. (The property $d_G(a_i,A)\geq n/2-2\sqrt\eps n$ where $a_i:=e_i\cap A$ is needed for finding the paths of length $2$.) We can choose these paths to be edge-disjoint. Let $\cF_2$ denote the copies of $C_{2k}$ thus obtained and let $G'':=G-H-\bigcup \cF_2$. We make the following important observation: $G''[A]$ and $G''[B]$ are $C_{2k}$-divisible. That these graphs are $2$-divisible is clear (they were obtained by removing edge-disjoint copies of $C_{2k}$ from a $2$-divisible graph $G$). To see that $e_{G''}(A)$ is divisible by $2k$, note that
$$e_{G''}(A)=e_{G-H}(A)-2(2k-r)-3r\equiv r-4k+2r-3r\equiv 0 \mod{2k}$$
(and $e_{G''}(B)$ is also divisible by $2k$).

Finally, note that
$$\Delta((G-G'')[A]), \Delta((G-G'')[B])\leq 2\eta n$$ and recall (S\ref{cliquepart1})--(S\ref{cliquepart4}). Let $X:=\{x: d_{G''}(x)<n/2-3\sqrt\eps n\}$.  Then $|X|\leq 4\sqrt\eps n$ and we can easily cover all edges incident at vertices in $X$ using a collection $\cF_3$ of edge-disjoint copies of $C_{2k}$ such that no vertex in $V(G'')\setminus X$ is used more than $\eps^{1/3}n$ times. Let $G''':=(G''-\bigcup \cF_3)\setminus X$. Now
$$\delta(G''')\geq n/2-3\eps^{1/3}n \geq (2/3+\eps)\cdot\max\{|A\setminus X|, |B\setminus X|\}.$$
We then find $C_{2k}$-decompositions $\cF_4$ and $\cF_5$ of $G'''[A]$ and $G'''[B]$ respectively, using Theorem~\ref{thm:witheps}.%
\COMMENT{Theorem~\ref{thm:nuexpander} would still need mindeg $n/2$ for $C_8$.}
Then $\bigcup_{i=1}^5 \cF_i$ gives a $C_{2k}$-decomposition of $G$.
\end{proofof}

\subsection{$G$ is close to bipartite}

We now consider the case when $G$ is close to bipartite. We will process the graph, covering any unusual edges or exceptional vertices with copies of $C_{2k}$ until we really are left with a dense bipartite graph. This we can decompose using Theorem~\ref{thm:bipartiteversion}.

\begin{lemma}\label{lem:closebip}
Let $k\in \N$, $k\geq 4$ and $1/n\ll \eps\ll 1$. Suppose that $G$ is a $C_{2k}$-divisible graph on $n$-vertices and $\delta(G)\geq n/2$. Suppose further that $G$ is $\eps$-close to bipartite. Then $G$ has a $C_{2k}$-decomposition.
\end{lemma}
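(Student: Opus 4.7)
The plan is to imitate the structure of the proof of Lemma~\ref{lem:closeclique}, but with the roles of intra-part edges and cross edges swapped: after a preprocessing phase that covers all atypical edges and exceptional vertices, we should be left with a dense balanced bipartite graph on $A \cup B$, to which Theorem~\ref{thm:bipartiteversion} applies (with $\delta_k = 1/2$, since $k \geq 3$).

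First I would refine the set $S$ (with $|S| = \lfloor n/2 \rfloor$ and $e(S) \leq \eps n^2$) into a partition $A, B$ of $V(G)$. Imitating Proposition~\ref{prop:cliquepart}, define $S_p := \{v \in S : d_G(v, \bar S) \leq p\}$ and analogously $T_p \subseteq \bar S$ with the roles of $S$ and $\bar S$ swapped; then $e(S) \leq \eps n^2$ together with $\delta(G) \geq n/2$ forces $|S_p|, |T_p| = O(\eps n^2/(n/2 - p))$ by a simple double-counting argument. Swapping $S_{11n/50}$ with $T_{11n/50}$ and a little further rebalancing yields $A, B$ with $|A|, |B| = n/2 \pm O(\sqrt\eps n)$ satisfying $d_G(v,\mathrm{opp}) \geq n/5$ for every $v$, and with at most $O(\sqrt\eps n)$ vertices on either side having opposite-side degree below $n/2 - 2\sqrt\eps n$. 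I would also have to establish that both $e(A)$ and $e(B)$ are $O(\sqrt\eps n^2)$: the bound on $e(A)$ follows from $e(S) \leq \eps n^2$ after the swaps, but the bound on $e(B)$ requires showing that if $e(\bar S) \gg \eps n^2$ then $G$ is $\eps'$-close to $K_{n/2} \cup K_{n/2}$ for some $\eps' = \eps'(\eps)$, in which case Lemma~\ref{lem:closeclique} applies directly and we are already done.

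Next, in a step parallel to Proposition~\ref{prop:c2kcliqueclearing}, cover all of $E(G[A]) \cup E(G[B])$ by copies of $C_{2k}$. With $\eta$ satisfying $1/n \ll \eta \ll \eps$, first extract an $\eta^4$-approximate $C_{2k}$-decomposition of $G[A] \cup G[B]$ greedily via Erd\H{o}s--Stone, with each extracted cycle permitted to use cross edges from $G[A,B]$. Then perform a two-step cleanup in the style of Proposition~\ref{prop:c2kcliqueclearing}: for each vertex $v$ still incident to at least $\eta^{1/2} n$ uncovered internal edges, pair up the other endpoints of those edges and close each pair off with a $C_{2k}$ by completing a path of length $2k-2$ on the opposite side; then pair up the few remaining isolated internal edges and cover each pair with a $C_{2k}$ via short opposite-side paths. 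The high opposite-side degrees make all of these greedy embeddings feasible while using each vertex only $O(\eta^{1/3} n)$ times.

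At this point only cross edges remain in the leftover graph $G^*$. Since the process has only removed edge-disjoint copies of $C_{2k}$ from the $C_{2k}$-divisible graph $G$, the graph $G^*$ is itself $C_{2k}$-divisible, and the bookkeeping above gives $d_{G^*}(v, \mathrm{opp}) \geq n/2 - \eta^{1/4} n$ for every $v \in A \cup B$. After covering the incident edges of an $O(\sqrt\eps n)$-size set of surviving low-degree vertices by a few more $C_{2k}$'s and deleting those vertices, we reach a $C_{2k}$-divisible balanced bipartite graph $G^{**}$ with $\dbip(G^{**}) \geq 1/2 + \eps$, to which Theorem~\ref{thm:bipartiteversion} supplies a $C_{2k}$-decomposition; combining with all $C_{2k}$'s removed along the way completes the decomposition of $G$. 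The main obstacle is the asymmetry in the definition of $\eps$-close to bipartite: only $e(S)$ is \emph{a priori} bounded, not $e(\bar S)$. The Erd\H{o}s--Stone clearing step relies on $e(G[A]) + e(G[B]) = o(n^2)$, so the crux will be the dichotomy in the first paragraph --- either both internal densities are small and the rest of the argument proceeds, or $G$ is close to $K_{n/2}\cup K_{n/2}$ and we instead invoke Lemma~\ref{lem:closeclique}.
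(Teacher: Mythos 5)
Your overall strategy matches the paper's: refine $S$ into a partition $A,B$, clear the internal edges of $G[A]\cup G[B]$ with copies of $C_{2k}$ so that only cross edges remain, and finish with Theorem~\ref{thm:bipartiteversion}. But the step you yourself single out as the crux --- the dichotomy ``either $e(\bar S)=O(\eps n^2)$, or $G$ is $\eps'$-close to $K_{n/2}\cup K_{n/2}$ and Lemma~\ref{lem:closeclique} applies'' --- is false. Take $G$ to be $K_{n/2,n/2}$ with a clique added on one side $T=\bar S$: then $\delta(G)\ge n/2$, $e(S)=0$, and $e(\bar S)\approx n^2/8$, yet every vertex of $S$ has all of its neighbours in $T$, so any balanced cut $(S'',\overline{S''})$ with $|T\cap S''|=t$ has at least $(n/2-t)^2+t^2\ge n^2/8$ crossing edges; hence $G$ is not $\eps'$-close to $K_{n/2}\cup K_{n/2}$ for any $\eps'<1/8$. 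So there are graphs covered by the lemma for which neither branch of your dichotomy holds, and the case analysis cannot be completed as written.

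Fortunately, the hypothesis you were trying to secure is not needed. The greedy Erd\H{o}s--Stone step does not require $e(G[A])+e(G[B])=o(n^2)$ to begin with: since $\mathrm{ex}(n,C_{2k})=o(n^2)$, one can keep extracting copies of $C_{2k}$ from $G[A]\cup G[B]$ (such a copy lies entirely inside $A$ or entirely inside $B$, so its removal does not touch cross degrees) until at most $\eta n^2$ internal edges remain, whatever the initial density of $G[B]$. This is exactly what Proposition~\ref{prop:closebip1} does, with no case distinction. Two smaller points to watch. First, the path of length $2k-2$ closing off a pair of internal neighbours $u,w\in A$ of a vertex $v\in A$ must run in the cross graph $G[A,B]$ (an even-length path there has both endpoints in $A$); the literal dual of Proposition~\ref{prop:c2kcliqueclearing}, namely a path with all interior vertices in $B$, would require $G[B]$ to be dense, which it is not after clearing --- your phrase ``on the opposite side'' leaves this ambiguous. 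Second, the genuinely delicate vertices are those with only moderate degree to \emph{both} sides (the set $X_0$ in the paper's proof); covering all edges at such a vertex requires a parity fix when its degree into one class is odd, which the paper achieves by routing one special $C_{2k}$ through a leftover internal edge. Your sketch should either account for this or, as you implicitly do, defer the deletion of low-degree vertices until only cross edges remain, at which point $2$-divisibility makes the pairing of their neighbours automatic.
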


The following proposition partitions the vertices of $G$ into an ``almost bipartite'' graph with high minimum degree.

\begin{prop}\label{prop:closebip1}
Let $k\in \N$, $k\geq 4$ and $1/n\ll \eps\ll 1$. Suppose that $G$ is a $C_{2k}$-divisible graph on $n$-vertices and $\delta(G)\geq n/2$. Suppose further that $G$ is $\eps$-close to bipartite. Then there exists $G'\subseteq G$ and a partition $A, B$ of $V(G')$ such that the following hold:
\begin{enumerate}[\rm(T1)]
	\item $\delta(G'[A,B])\geq n/3$;\label{closebip1:1}
	\item $G-G'$ has a $C_{2k}$-decomposition;\label{closebip1:2}
	\item $|A|, |B|= n/2\pm 6\sqrt\eps n$;\label{closebip1:3}
	\item $e(G'[A])+e(G'[B])<\eps n^2$.\label{closebip1:4}
\end{enumerate}
\end{prop}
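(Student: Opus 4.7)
The plan is to start from the bipartition witnessing $\eps$-closeness, to delete a small set of badly-behaved vertices and cover their incident edges by copies of $C_{2k}$, and then to strip internal edges inside each part down to below $\eps n^2$ using cycles that lie wholly in one part.

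First, let $S$ be the set witnessing closeness, set $A:=S$ and $B:=\overline{S}$, so $|A|,|B|\in\{\lfloor n/2\rfloor,\lceil n/2\rceil\}$. Summing $d_G(v)\geq n/2$ over $v\in A$ yields $e_G(A,B)\geq n^2/4-O(\eps n^2)$, and a standard double count bounds the sets
\[
A_\mathrm{bad}:=\{v\in A:d_G(v,A)\geq n/12\},\quad B_\mathrm{bad}:=\{v\in B:d_G(v,A)<5n/12\}
\]
by $|A_\mathrm{bad}|,|B_\mathrm{bad}|\leq 25\eps n$. Setting $V_\mathrm{bad}:=A_\mathrm{bad}\cup B_\mathrm{bad}$ gives $|V_\mathrm{bad}|\leq \sqrt{\eps}\,n$; every $v\in A\setminus A_\mathrm{bad}$ has $d_G(v,B)>5n/12$, and every $v\in B\setminus B_\mathrm{bad}$ has $d_G(v,A)\geq 5n/12$.

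Next I would cover all edges of $G$ incident to $V_\mathrm{bad}$ by a family $\cF_1$ of edge-disjoint copies of $C_{2k}$. For each $v\in V_\mathrm{bad}$, $2$-divisibility forces $d_G(v,A)\equiv d_G(v,B)\pmod{2}$, so I pair neighbours within each part and close each same-side pair $(w_1,w_2)$ via a length-$(2k-2)$ path in $G[A,B]$; such paths are plentiful since $V(G)\setminus V_\mathrm{bad}$ has very dense cross-neighbourhoods. In the odd-parity case the lone mixed pair $(vw_1,vw_2)$ with $w_1\in A$, $w_2\in B$ requires an auxiliary internal edge to flip parity; I use a $C_{2k}$ of the form $v,w_1,a,P,w_2,v$ with $w_1a\in G[A]$ and $P$ a length-$(2k-3)$ path in $G[A,B]$ from $a$ to $w_2$. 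A standard bookkeeping argument ensures each non-bad vertex is used as an interior vertex $O(\sqrt{\eps}\,n)$ times. Setting $G^*:=G-\bigcup\cF_1$, every edge of $G$ incident to $V_\mathrm{bad}$ is now covered.

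I then apply the Bondy--Simonovits bound $\mathrm{ex}(n,C_{2k})=O(n^{1+1/k})$ to $G^*[A]$ and $G^*[B]$ separately: while either induced subgraph has at least $\eps n^2/4$ edges it contains a $C_{2k}$, which we remove, iterating to obtain a family $\cF_2$ of cycles that use no cross edges. Let $A':=A\setminus A_\mathrm{bad}$, $B':=B\setminus B_\mathrm{bad}$, and let $G'$ be the subgraph of $G-\bigcup(\cF_1\cup\cF_2)$ induced on $A'\cup B'$. Then (T3) follows from $|V_\mathrm{bad}|\leq \sqrt{\eps}\,n$; (T4) from the termination of the Bondy--Simonovits step; (T2) because every edge of $G-G'$ lies in $\cF_1\cup\cF_2$ (edges incident to $V_\mathrm{bad}$ are all in $\cF_1$). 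For (T1), any $v\in A'$ starts with $d_G(v,B)>5n/12$ and loses at most $|B_\mathrm{bad}|+O(\sqrt{\eps}\,n)\leq n/20$ of those neighbours to $V_\mathrm{bad}\cap B$ and to $\cF_1$-constructions, so $d_{G'[A',B']}(v)\geq n/3$; the case $v\in B'$ is symmetric.

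The main obstacle is the parity-breaking step inside $\cF_1$: in the bipartite graph $G[A,B]$ every $w_1$-$w_2$ path between $w_1\in A$ and $w_2\in B$ has odd length, whereas $2k-2$ is even, so the mixed pair cannot be closed without an auxiliary internal edge. Securing such an edge $w_1a$ when $w_1$ has few same-side neighbours requires a short case analysis: if neither endpoint of the mixed pair admits a local internal edge, then $v$'s same-side neighbourhoods are essentially pendant-like, and leftover mixed pairs from distinct bad vertices can instead be combined pairwise into a single $C_{2k}$ using their two internal edges and two paths of suitable even length in $G[A,B]$.
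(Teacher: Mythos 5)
Your overall architecture (isolate a small set of atypical vertices, cover their edges by cycles, then strip the internal edges down to $\eps n^2$) is the right shape, but the construction of $\cF_1$ has two genuine gaps, both stemming from the decision to \emph{delete} the badly-behaved vertices rather than \emph{reassign} them. First, the parity repair is not guaranteed to work as described. When $d_G(v,A)$ and $d_G(v,B)$ are both odd you need a $C_{2k}$ through one $A$-neighbour $w_1$ and one $B$-neighbour $w_2$ of $v$, and since every $w_1$--$w_2$ path in $G[A,B]$ has odd length while you need even total length, an auxiliary internal edge is indeed forced. But the internal edge $w_1a$ you propose need not exist (a non-bad $w_1\in A$ may have $d_G(w_1,A)=0$), and your fallback of pairing leftover mixed pairs across distinct bad vertices requires the number of such leftovers to be even, which nothing guarantees: the handshake lemma only makes the number of \emph{all} vertices of odd internal degree in $A$ even, not the number of \emph{bad} ones. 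The robust fix is global rather than local: since $G$ is $2$-divisible and $e(G)$ is even, $e_G(A)+e_G(B)$ is even, so whenever one internal edge is stranded there is a second internal edge $uv$ \emph{somewhere} in the graph, and one routes a single $C_{2k}$ through both of them. This is exactly the parity argument the paper uses (via \eqref{eq:congruence}-style reasoning) and it is missing from your proposal.

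Second, your path-routing silently assumes the endpoints of each path (the paired neighbours of a bad vertex) have large cross-degree, i.e.\ are non-bad. Edges of $G[V_{\mathrm{bad}}]$ violate this: a vertex $v\in B$ with $d_G(v,A)\leq 2$ and $d_G(v,B)\approx n/2$ is in $B_{\mathrm{bad}}$, and an edge between two such vertices cannot be completed to a $C_{2k}$ by a path of length $2k-2$ in $G[A,B]$ anchored at them (wrong parity, and no room to leave either endpoint through $A$). Your proposal never treats $E(G[V_{\mathrm{bad}}])$ separately. The paper avoids both problems at once by \emph{re-partitioning} before covering anything: a vertex with small cross-degree but large degree to the opposite side is simply moved to the other class ($X_S$, $X_T$ in the proof of Proposition~\ref{prop:closebip1}), after which \emph{every} vertex has cross-degree at least $n/13$, and only the few genuinely ambiguous vertices $X_0$ (both internal degrees moderate, hence both cross-degrees at least $n/12$) need their edges covered. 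If you keep the deletion strategy, you must (i) replace the local internal-edge search by the global parity argument above, and (ii) first cover $G[V_{\mathrm{bad}}]$ by a separate routine that can cope with endpoints of tiny cross-degree, or else show such configurations cannot occur; as written, neither is done.
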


\noindent Note that $G'$ is not necessarily spanning.

\begin{proof}
Let $S\subseteq V(G)$ be such that $|S|=\lfloor n/2 \rfloor$ and $e_G(S)\leq \eps n^2$. Let $T:=\overline S$ and consider the bipartite graph $G_0:=G[S, T]$. We want to transform $G_0$ into a bipartite graph whose minimum degree is as high as possible. We first modify the bipartition  $S,T$ to obtain a new bipartition $S', T'$. Let
$$X:=\{x:d_{G_0}(x)<n/2-\sqrt\eps n\}.$$
It is easy to see that
$|X|\leq 5\sqrt\eps n.$
\COMMENT{let $Y:=\{x\in S:d_{G_0}(x)<n/2-\sqrt\eps n\}$ and $Z:=\{x\in T:d_{G_0}(x)<n/2-\sqrt\eps n\}$. Any vertex in $Y$ must have at least $\sqrt\eps n$ neighbours in $S$. So $|Y|\leq 2\eps n^2/\sqrt\eps n\leq 2\sqrt\eps n$. To bound $|Z|$, observe that 
$$|S|\lceil n/2\rceil-2\eps n^2\leq e(G_0)< |Z|(n/2-\sqrt\eps n)+(\lceil n/2\rceil-|Z|)|S|$$
so $|Z|\leq 2\eps n^2/(|S|-(n/2-\sqrt\eps n))\leq 3\sqrt\eps n$.}
Let
$$X_S:=\{x\in X: d_G(x, S)< 5n/12\}\text{ and let }X_T:=X\setminus X_S.$$
Let $S':=(S\setminus X)\cup X_S$ and let $T':=\overline {S'}=(T \setminus X)\cup X_T$.
It is useful to note that:
\begin{enumerate}[(i)]
\item for any $x\in S'$, $d_G(x, T')\geq n/13$ and, if $x\in S'\setminus X$, $d_G(x, T')\geq n/2-6\sqrt\eps n$;\label{item:S'}
\item for any $x\in T'$, $d_G(x, S')\geq 5n/13$ and, if $x\in T'\setminus X$, $d_G(x, S')\geq n/2-6\sqrt\eps n$.\label{item:T'}
\end{enumerate}
Let $X_0:=\{x\in X: d_G(x, S)\text{ and } d_G(x, T)< 5n/12\}$. Since $X_0\subseteq X_S$, the vertices in $X_0$ have all been assigned to $S'$ but they do not naturally belong to either side of the partition so we will cover all edges incident at these vertices in the next step.

Choose any vertex $x\in X_0$. Suppose that $d_G(x,S')$ is odd. Note that $e_G(S', T')$ is even (since $G$ is $2$-divisible).%
\COMMENT{implies every component of $G$ is Eulerian.}
This means that $e_G(S')+e_G(T')$ is also even (recall that the number of edges in $G$ is divisible by $2k$). In particular, there must be an edge $uv\in E(S')\cup E(T')$ which is not incident at $x$. Let $y\in N_G(x,S')\setminus (X\cup \{u,v\})$.  We now find a copy of $C_{2k}$ which uses both $xy$ and $uv$. If $u,v\in S'$, note that $|N_G(y)\cap N_G(u)|\geq n/15$ by \eqref{item:S'}, so we can find a path of length two from $u$ to $y$. We also find a path of length $2k-4\geq 4$ between $x$ and $v$. At each stage, we can choose from at least $n/20$ vertices. This gives a copy of $C_{2k}$. We proceed in a similar way when $u,v\in T'$.%
\COMMENT{find paths of length $k-1\geq 3$ between $x$ and $u$ and between $y$ and $v$.}
We may now assume that $d_G(x,S')$ is even. Pair up the neighbours of $x$ arbitrarily and find edge-disjoint paths of length $2k-2$ between each pair in $G[S', T']$ (to obtain edge-disjoint copies of $C_{2k}$). Remove all copies of $C_{2k}$ obtained in this way from $G$. Repeat this process for the remaining vertices in $X_0$ and write $\cF_1$ for the collection of copies of $C_{2k}$ thus obtained. Let $G_1:=G-\bigcup \cF_1$. At the end of this process, we may assume that each vertex in $V(G)\setminus X_0$ has been used in at most $\eps^{1/3}n/2$ copies of $C_{2k}$.%
\COMMENT{Always able to choose from $n/20$ vertices. Need to find $|X_0|n\leq 5\sqrt\eps n^2$ paths and each vertex is at the end of at most $5\sqrt\eps n$ of these. So can avoid using any vertex more than $\eps^{1/3}n/2$ times.}

Let $A:=S'\setminus X_0$, $B:=T'$. Observe that $|A|, |B|=n/2\pm 6\sqrt\eps n$ and $$\delta(G_1[A,B])\geq 5n/13-\eps^{1/3}n\geq n/3.$$
Using the Erd\H{o}s-Stone theorem, we find an $\eps$-approximate $C_{2k}$-decomposition $\cF_2$ of $G_1[A]\cup G_1[B]$. Letting $G':=G-\bigcup (\cF_1\cup \cF_2)$ completes the proof.
\end{proof}

We use this proposition to prove Lemma~\ref{lem:closebip}.

\begin{proofof}{Lemma~\ref{lem:closebip}}
Apply Proposition~\ref{prop:closebip1} to find $G', A, B$ satisfying (T\ref{closebip1:1})--(T\ref{closebip1:4}). Let $\cF_1$ be a $C_{2k}$-decomposition of $G-G'$.
Let $A':=\{x\in A: d_{G'}(x, A)\geq \sqrt\eps n\}$ and define $B'$ similarly. Note that $|A'\cup B'|\leq 2\sqrt\eps n$ by (T\ref{closebip1:4}). Take each vertex $x\in A'$ in turn and split $N_{G'}(x, A)$ into pairs (leaving at most one vertex). Use (T\ref{closebip1:1}) and (T\ref{closebip1:3}) to find a path of length $2k-2$ between each pair in $G'[A,B]$ to obtain a copy of $C_{2k}$ together with $x$. Carry out this process for the remaining edges at each  remaining vertex in $A'$. Do the same for the vertices in $B'$. We may carry out this process so that each vertex appears in at most $\eps^{1/3}n$ of the paths. Write $\cF_2$ for the collection of copies of $C_{2k}$ obtained in this way and let $G_1:=G'-\bigcup \cF_2$. We have $\Delta(G_1[A]), \Delta(G_1[B])<\eps^{1/3}n$ and 
\begin{equation}\label{eq:g1bip}
\delta(G_1[A, B])\geq n/3-2\eps^{1/3}n.
\end{equation}

We now cover the remaining edges in $E_{G_1}(A)\cup E_{G_1}(B)$. There are an even number of these so we can pair them up arbitrarily. We use \eqref{eq:g1bip} to find paths of even length at least two between any two vertices in the same class and paths of odd length at least three between any two vertices in different classes. At each step we have a choice of at least $n/10$ vertices so we are able to find edge-disjoint copies of $C_{2k}$ (by finding paths of suitable length between the endpoints of each pair of edges) so that each pair of edges is covered and no vertex appears in more than $2\eps^{1/3}n$ of the cycles. Write $\cF_3$ for the collection of copies of $C_{2k}$ obtained in this step. The graph $G_2:=G_1-\bigcup \cF_3$ is $C_{2k}$-divisible and bipartite with vertex classes $A$ and $B$ of size $n/2\pm 6\sqrt\eps n$. Furthermore, $\delta(G_2)\geq n/3-6\eps^{1/3}n$ so $\dbip(G_2)\geq 1/2+\eps$. Thus $G_2$ has a $C_{2k}$-decomposition $\cF_4$ by Theorem~\ref{thm:bipartiteversion}. Together, $\bigcup_{i=1}^4\cF_i$ gives a $C_{2k}$-decomposition of $G$.
\end{proofof}

\section{Decompositions of bipartite graphs}\label{sec:bipartite}

In this section we prove Theorem~\ref{thm:bipartiteversion}, the bipartite version of  Theorem~\ref{thm:witheps}. Theorem~\ref{thm:bipartiteversion} finds a $C_{2k}$-decomposition of $G$ when $G$ is bipartite and has high minimum degree. We used this result to prove Theorem~\ref{thm:mainc2k} earlier on. The proof closely follows the iterative absorption argument of \cite{stef}, thus we omit some of the details.

We require the following definition, a bipartite version of the vortices considered in Section~\ref{sec:c4}. 
Let $G=(A,B)$ be a bipartite graph. A \emph{$(\delta,\mu,m)$-vortex respecting $(A,B)$} in $G$ is a sequence $U_0 \supseteq U_1 \supseteq \dots \supseteq U_\ell$ such that
\begin{itemize}
\item $U_0=V(G)$;
\item $|U_i\cap X|=\lfloor \mu|U_{i-1}\cap X|\rfloor$ for all $1\leq i\leq \ell$ and each $X\in \{A,B\}$, and $|U_\ell|= m$;
\item $d_G(x,U_i\cap X) \ge \delta |U_i\cap X|$, for all $1\leq i\leq \ell$, each $X\in \{A,B\}$ and all $x \in U_{i-1}\setminus X$.
\end{itemize}
The following observation guarantees a vortex in $G$. It is proved by repeatedly applying the Chernoff bound given by Lemma~\ref{lem:chernoff} (for more details, see%
\COMMENT{Appendix~\ref{append:bipcover}}
the proof of Lemma~4.3 in~\cite{stef}).

\begin{lemma} \label{lem:getvortex:bip}
Let $0\leq \delta\leq 1$ and $1/m'\ll \mu<1$. Suppose that $G=(A,B)$ is a bipartite graph with $m'\leq |A|\leq |B|\leq 2|A|$ and $\dbip \ge \delta$. Then $G$ has a $(\delta-\mu,\mu,m)$-vortex respecting $(A,B)$ for some $2\lfloor \mu m' \rfloor \le m \le m'$.
\end{lemma}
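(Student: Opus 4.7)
The plan is to construct the vortex iteratively by random sub-sampling, mirroring the argument for Lemma~4.3 of \cite{stef} but keeping the two parts separate. Set $U_0 := V(G)$. Given $U_{i-1}$ with $|U_{i-1}\cap A|, |U_{i-1}\cap B|$ still large, independently pick $U_i\cap A$ uniformly at random among subsets of $U_{i-1}\cap A$ of size $\lfloor \mu|U_{i-1}\cap A|\rfloor$ and, likewise, $U_i\cap B$ as a uniformly random subset of $U_{i-1}\cap B$ of size $\lfloor \mu|U_{i-1}\cap B|\rfloor$. Terminate at the first index $\ell$ for which $|U_\ell|\le m'$ and set $m:=|U_\ell|$. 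The number of iterations is $\ell = O_\mu(\log|V(G)|)$.

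First I would verify the size bounds baked into the definition. At each step, both parts are multiplied by a factor within $\mu\pm O(1/|U_{i-1}|)$ of $\mu$, so the ratio $|U_i\cap A|:|U_i\cap B|$ stays between $1:3$ and $3:1$ throughout (starting from the hypothesis $|A|\le|B|\le 2|A|$). In particular each part of $U_{\ell-1}$ has size at least $|U_{\ell-1}|/4 \ge m'/(4\mu)$, so after one more step each part of $U_\ell$ has size at least $\lfloor\mu m'\rfloor$, giving $|U_\ell|\ge 2\lfloor \mu m'\rfloor$; the stopping rule gives $|U_\ell|\le m'$.

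The main content is the degree condition. I would prove by induction on $i$ the strengthened statement
\[
d_G(x, U_i\cap X) \ge \bigl(\delta-\tfrac{i}{\ell}\mu\bigr)\,|U_i\cap X|\qquad\text{for all $i\le \ell$, all $X\in\{A,B\}$, and all $x\in U_{i-1}\setminus X$,}
\]
so that the target inequality $(\delta-\mu)|U_i\cap X|$ appears at $i=\ell$. For the inductive step, $d_G(x,U_i\cap X)$ has a hypergeometric distribution with expectation $\tfrac{|U_i\cap X|}{|U_{i-1}\cap X|}\,d_G(x,U_{i-1}\cap X) \ge \mu\,d_G(x,U_{i-1}\cap X) - O(1)$. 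Apply Lemma~\ref{lem:chernoff} with $t=|U_{i-1}\cap X|^{2/3}$: the failure probability is $2e^{-2|U_{i-1}\cap X|^{1/3}}$. Take a union bound over all $x\in U_{i-1}$, both choices of $X$, and all $i\le\ell$; since $|U_{i-1}\cap X| \ge \tfrac{1}{4}\mu m'$ and $1/m'\ll\mu$, this total probability is still less than $1$, so some realisation succeeds simultaneously.

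The main technical point to watch is that the accumulated Chernoff slack stays below $\mu$ across the $O_\mu(\log|V(G)|)$ iterations. The per-step error is $|U_i\cap X|^{-1/3}$, which forms an essentially geometric sequence dominated by its final term $O((\mu m')^{-1/3})\ll \mu/\ell$, so the cumulative shortfall is at most $\mu$ and the induction closes. This is precisely where the hypothesis $1/m'\ll\mu$ is used. Any realisation of the random process meeting all the Chernoff events produces the desired $(\delta-\mu,\mu,m)$-vortex respecting $(A,B)$.
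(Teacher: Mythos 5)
Your overall strategy is the same as the paper's (iterated random subsampling of each part separately, hypergeometric concentration via Lemma~\ref{lem:chernoff}, and an accumulated error controlled by $1/m'\ll\mu$), but the induction as you have set it up does not close. You budget a uniform loss of $\mu/\ell$ per step and justify this by claiming that the largest per-step error, $O((\mu m')^{-1/3})$, is $\ll\mu/\ell$. This is false in general: $\ell$ is of order $\log_{1/\mu}(|V(G)|/m')$ and so grows with $|V(G)|$, while $(\mu m')^{-1/3}$ is a constant depending only on $m'$ and $\mu$. Hence for large $|V(G)|$ the per-step error in the last few iterations exceeds $\mu/\ell$ and the inductive statement fails exactly where the errors are largest. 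The correct bookkeeping --- and what the paper does --- is to let the allowance at step $i$ track the actual error: prove $d_G(x,U_i\cap X)\ge(\delta-2a_i)|U_i\cap X|$ with $a_i:=|A|^{-1/3}\sum_{j=1}^{i}\mu^{-(j-1)/3}$, so that $a_i-a_{i-1}\approx|U_{i-1}\cap A|^{-1/3}$ absorbs the step-$i$ loss, and then bound $a_\ell$ by the geometric series dominated by its final term, which is $O((m')^{-1/3})\le\mu/2$. You do state this geometric-domination fact, so the right idea is present, but the strengthened statement you propose to induct on is not the one it supports; it needs to be replaced by the non-uniform version.

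Two smaller points. Your size argument for $m\ge2\lfloor\mu m'\rfloor$ asserts each part of $U_{\ell-1}$ has size at least $m'/(4\mu)$, but your stopping rule only yields $|U_{\ell-1}|>m'$, hence at least $m'/4$ per part, which loses a factor that would need patching (the paper is admittedly equally terse here). Also, a single union bound ``over all $x\in U_{i-1}$ and all $i$'' needs care because $U_{i-1}$ is itself random and the per-vertex failure probability at the final step is only $e^{-c(\mu m')^{1/3}}$, a constant: one should condition on $U_{i-1}$ at each step (equivalently, argue sequentially, as the paper does via Proposition~\ref{prop:randomsubset:bip}) rather than union over all of $V(G)$ at every level.
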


The idea is to use the following result to cover almost all of the edges in $G$ leaving only a small (very restricted) remainder which can be dealt with using the absorbers given by Lemma~\ref{lem:abs:bip}.

\begin{lemma} \label{lem:nearoptimal:bip}
Let $k\in \N$, $k\geq 2$ and let $1/m \ll \mu\ll 1/k$. Let $G=(A,B)$ be a bipartite $2$-divisible graph with $n\leq |A|,|B|\leq 2n$ and $\dbip \ge 1/2 + 3\mu$. Let $U_0 \supseteq U_1 \supseteq \dots \supseteq U_\ell$ be a $(1/2+4\mu,\mu,m)$-vortex respecting $(A,B)$ in $G$. Then there exists $H_\ell\subseteq G[U_\ell]$ such that $G-H_\ell$ is $C_{2k}$-decomposable.
\end{lemma}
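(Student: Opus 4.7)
The plan is to carry out an iterative absorption along the vortex, following closely the argument of Lemma~5.1 in~\cite{stef} but adapted to the bipartite setting so that every path respects the bipartition $(A,B)$. I would construct a decreasing sequence $H'_0 = G \supseteq H'_1 \supseteq \cdots \supseteq H'_\ell$ with the property that $H'_i \subseteq G[U_i]$, removing a $C_{2k}$-decomposable piece at each step. Setting $H_\ell := H'_\ell \subseteq G[U_\ell]$, the graph $G - H_\ell = \bigcup_{i=0}^{\ell-1} (H'_i - H'_{i+1})$ is $C_{2k}$-decomposable by construction.

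For the inductive step, given $H'_i \subseteq G[U_i]$ I would proceed in two phases. First, by greedily applying the Erd\H{o}s--Stone theorem (which is legitimate since $C_{2k}$ is bipartite and therefore has density-zero extremal number), I pull out an $\eta$-approximate $C_{2k}$-decomposition of the edges of $H'_i$ that touch $U_i \setminus U_{i+1}$, leaving a small remainder $R_i$ with $\Delta(R_i) \leq \eta |U_i|$ and $e(R_i) \leq \eta |U_i|^2$, for a constant $1/m \ll \eta \ll \mu$. Second, I cover each remaining edge $xy \in R_i$ by a copy of $C_{2k}$ formed by adjoining an $(x,y)$-path of length $2k-1$ whose interior vertices lie in $U_{i+1}$. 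Since $G$ is bipartite, $x$ and $y$ automatically lie in opposite parts and the required path length $2k-1$ is odd, so the bipartition is respected without case distinction. The vortex condition $d_{H'_i}(v, U_{i+1}\cap X) \geq (1/2+3\mu)|U_{i+1}\cap X|$ for each $X\in\{A,B\}$ ensures that any two vertices in the same part have at least $6\mu|U_{i+1}\cap X|$ common neighbours in the opposite part of $U_{i+1}$, so such paths can be routed greedily and edge-disjointly while using each vertex at most $\eta^{1/3}|U_{i+1}|$ times.

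The main obstacle is arranging this cover-down step without overusing vertices in $U_{i+1}$, since doing so would erode the vortex degree property that subsequent iterations depend on. However the hierarchy $1/m \ll \mu \ll 1/k$ is exactly tuned for this: the total number of paths to route, $O(\eta|U_i|^2)$, is negligible compared to the common-neighbourhood sizes $\Theta(\mu|U_{i+1}|)$, and the bounded path length $2k-1$ means a standard greedy argument (with an application of Lemma~\ref{lem:chernoff} to obtain concentration if one prefers random choices) carries through. A small parity-correction using $O_k(1)$ copies of $C_{2k}$ supported entirely inside $G[U_{i+1}]$ at the end of each iteration ensures that every remaining edge of $H'_{i+1}$ lies inside $G[U_{i+1}]$ (strictly, after cover-down the only surviving edges touching $U_i\setminus U_{i+1}$ are those accounted for in our cycles, so no further correction is needed beyond controlling vertex-usage). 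After $\ell$ iterations, collecting all $C_{2k}$-copies removed gives a $C_{2k}$-decomposition of $G-H_\ell$ with $H_\ell\subseteq G[U_\ell]$, as required.
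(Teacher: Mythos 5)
Your overall architecture --- iterating down the vortex, removing an approximate $C_{2k}$-decomposition at each level and then covering the sparse leftover by cycles routed through the next vortex set --- is the same as the paper's, which packages one iteration as Lemma~\ref{lem:coverdown:bip} and then inducts. However, two steps of your cover-down do not work as written. First, you assert that greedily applying the Erd\H{o}s--Stone theorem leaves a remainder $R_i$ with $\Delta(R_i)\leq \eta|U_i|$. Erd\H{o}s--Stone only bounds $e(R_i)$; it gives no control on maximum degree, and a single vertex $x\in U_i\setminus U_{i+1}$ may retain degree of order $|U_i|$ in $R_i$. Your phase~2 would then need that many edge-disjoint paths leaving $x$ into $U_{i+1}$, which is impossible since $|U_{i+1}|=\lfloor\mu|U_i|\rfloor\ll |U_i|$. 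Obtaining the maximum-degree bound on the leftover is precisely the content of Lemmas~\ref{lem:careofbad:bip} and~\ref{lem:boundmaxdegree:bip}, which require the $K_t$-decomposition of $K_s$ from Proposition~\ref{prop:clique dec} and an iterative protection of bad vertices; it is not a by-product of the greedy approximate decomposition.

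Second, even granting $\Delta(R_i)\leq\eta|U_i|$, the first and last edges of each path you route in phase~2 join $\{x,y\}$ to $U_{i+1}$; when $x\notin U_{i+1}$ these edges ``touch $U_i\setminus U_{i+1}$'' and so lie in the very edge set you handed to the approximate decomposition, which may have consumed essentially all of them at any particular vertex. One must therefore reserve, \emph{before} the approximate decomposition, a sparse set of edges between $U_i\setminus U_{i+1}$ and $U_{i+1}$ that the approximate decomposition is forbidden to use --- this is the reservoir $R=\bigcup R_i$ in the proof of Lemma~\ref{lem:coverdown:bip}. Once the leftover edges at a vertex $w\notin U_{i+1}$ are instead covered by pairing up its remaining neighbours and routing paths of length $2k-2$ inside $U_{i+1}$ (as the paper does, and as the bounded reservoir forces), the hypothesis $2\mid d_G(x)$ for $x\notin U$ becomes essential to make the pairing exhaustive; it is not the dispensable parity afterthought your final paragraph suggests.
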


We will prove Lemma~\ref{lem:nearoptimal:bip} in Section~\ref{sec:bip:most}. Theorem~\ref{thm:bipartiteversion} then follows directly from these results.

\begin{proofof}{Theorem~\ref{thm:bipartiteversion}}(Assuming Lemma~\ref{lem:nearoptimal:bip}.)
Let $m,n_0\in \N$ and $\mu$ be such that 
$$1/n_0\ll 1/m\ll \mu\ll \eps, 1/k.$$
Apply Lemma~\ref{lem:getvortex:bip} to find $U_0\supseteq U_1\supseteq \dots \supseteq U_\ell$, a $(\delta_k+\eps/2, \mu, m)$-vortex respecting $(A,B)$ in $G$.

Let $G_1:=G-G[U_1]$. We have $\dbip(G_1)\geq \delta_k+\eps/2$. Apply Lemma~\ref{lem:abs:bip} to $G_1$ with $U_\ell$ playing the role of $U$ to find an absorber $A^*\subseteq G_1$ as in the lemma. We have $\Delta (A^*)\leq |A^*|\leq 2^{m^2}$, so $U_0\supseteq U_1\supseteq \dots \supseteq U_\ell$ is a $(\delta_k+4\mu, \mu, m)$-vortex respecting $(A,B)$ in $G^*:=G-A^*$ and $\dbip(G^*)\geq 1/2+3\mu$. Then apply Lemma~\ref{lem:nearoptimal:bip} to $G^*$ to find $H_\ell\subseteq G^*[U_\ell]$ such that $G^*-H_\ell$ has a $C_{2k}$-decomposition. Observing that $A^*\cup H_\ell$ has a $C_{2k}$-decomposition (by Lemma~\ref{lem:abs:bip}) completes the proof.
\end{proofof}

\subsection{Proving Lemma~\ref{lem:nearoptimal:bip}}\label{sec:bip:most}

First, we state some useful results. We require the following simple proposition on decompositions of cliques. It is a special case of Wilson's Theorem and is proved very easily (see \cite{stef}, for example).

\begin{prop} \label{prop:clique dec}
Let $p$ be prime. Then for every $k\in \N$, $K_{p^k}$ has a $K_p$-decomposition. 
\end{prop}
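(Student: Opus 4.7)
The plan is to give the standard explicit construction using the affine geometry $\mathrm{AG}(k,p)$ over $\mathbb{F}_p$. I would identify the vertex set of $K_{p^k}$ with $\mathbb{F}_p^k$, and for each one-dimensional subspace $W \leq \mathbb{F}_p^k$ and each coset $v + W$ (as $v$ ranges over a system of coset representatives), take the complete graph on the $p$-point set $v+W$ as one member of the decomposition. Each such $v+W$ has exactly $p$ vertices, so it induces a copy of $K_p$ in $K_{p^k}$.

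The content of the proof is then to verify that every edge of $K_{p^k}$ lies in exactly one of these cliques. Given any two distinct points $x, y \in \mathbb{F}_p^k$, the vector $y - x$ is nonzero, so (using that $p$ is prime and $\mathbb{F}_p$ is a field) it generates a unique one-dimensional subspace $W := \langle y - x \rangle$, and $x + W$ is the unique coset of $W$ containing both $x$ and $y$. Conversely, if $x, y$ both lie in some coset $v + W'$ of a one-dimensional subspace $W'$, then $y - x \in W'$ forces $W' = \langle y - x \rangle = W$, and $x \in v + W$ forces $v + W = x + W$. Hence the edge $xy$ is covered by exactly one clique in the family, so we have a $K_p$-decomposition.

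There is no real obstacle here; the only thing to be careful about is the uniqueness argument in the previous paragraph, which crucially uses that $\mathbb{F}_p$ is a field (so that the span of a nonzero vector really is a one-dimensional subspace of size $p$). The case $k = 1$ is trivial since $K_p$ itself is the only block. For intuition, the $k = 2$ case recovers the classical fact that the affine plane of order $p$ resolves $K_{p^2}$ into $p(p+1)$ parallel classes of lines, each line being a $K_p$.
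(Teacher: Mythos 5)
Your construction is correct: the lines of the affine space $\mathrm{AG}(k,p)$ (the cosets of the one-dimensional subspaces of $\mathbb{F}_p^k$) are $p$-element sets, and your uniqueness argument shows every pair of points lies on exactly one line, which is precisely a $K_p$-decomposition of $K_{p^k}$. This is the standard "very easy" proof the paper alludes to when it cites the literature rather than writing it out, so your approach matches the intended one.
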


We use the next result to find an approximate $C_{2k}$-decomposition of $G$ and maintain some control over the number of edges incident at any vertex in a given set $X$.

\begin{lemma} \label{lem:careofbad:bip}
Let $k\in \N$, $k\geq 2$ and $1/n \ll \eta \ll \eps,1/k$. Suppose that $G=(A,B)$ is a bipartite graph with $n\leq |A|,|B|\leq 4n$ and $\dbip(G)\geq 1/2+\eps$. Let $X\subseteq V(G)$ of size at most $\eta^{1/2} n$. Then there exists $H\subseteq G$ such that $G-H$ is $C_{2k}$-decomposable, $Y:=\{x\in V(G):d_H(x)>\eta n\}$ has size at most $\eta n$ and $X\cap Y=\emptyset$.
\end{lemma}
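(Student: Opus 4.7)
The plan is a two-phase approach. In Phase 1, I cover enough edges at each $x\in X$ by edge-disjoint copies of $C_{2k}$ through $x$ to drive its residual degree below $\eta n/2$. In Phase 2, I apply the Erd\H{o}s--Stone theorem to the resulting graph to obtain an $\eta^{10}$-approximate $C_{2k}$-decomposition; its small leftover $H$ then automatically controls $|Y|$.

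For Phase 1, set $G_0:=G$ and process the vertices of $X$ one at a time. For each $x\in X$, while $d_{G_0}(x)>\eta n/2$, I extract a copy of $C_{2k}$ through $x$ from $G_0$ as follows: pick two distinct neighbours $u,v\in N_{G_0}(x)$, greedily build an internally vertex-disjoint path $P$ of length $2k-2$ from $u$ to $v$ in $G_0-x$, and remove $xu\cup P\cup vx$ from $G_0$. By inclusion--exclusion, any two vertices in the same part have at least $2\eps n$ common neighbours in the other part as long as $\dbip(G_0)\geq 1/2+\eps/2$, which I maintain throughout, so each step of the greedy extension has many valid candidates. The total number of Phase 1 cycles is at most $\sum_{x\in X}d_G(x)/2\leq 4\eta^{1/2}n^2$, so via either a greedy rule that discards vertices already used $\eta^{2/3}n$ times, or a random choice among valid candidates combined with Lemma~\ref{lem:chernoff}, I arrange that no vertex serves as an internal vertex of more than $\eta^{2/3}n$ Phase 1 cycles. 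Hence every vertex loses at most $2k\eta^{2/3}n\ll\eps n$ edges in Phase 1, preserving $\dbip(G_0)\geq 1/2+\eps/2$ throughout.

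For Phase 2, since $C_{2k}$ is bipartite and $G_0$ is still very dense, the Erd\H{o}s--Stone theorem lets me greedily extract copies of $C_{2k}$ from $G_0$ until the leftover $H$ satisfies $e(H)\leq \eta^{10}n^2$. The union of all Phase 1 and Phase 2 cycles is then a $C_{2k}$-decomposition of $G-H$. For $x\in X$, Phase 1 ensures $d_H(x)\leq d_{G_0}(x)\leq \eta n/2<\eta n$, so $X\cap Y=\emptyset$. Since $\sum_v d_H(v)=2e(H)\leq 2\eta^{10}n^2$, we get $|Y|\cdot\eta n<2\eta^{10}n^2$ and hence $|Y|\leq 2\eta^9 n\leq \eta n$. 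The main obstacle is the load-balancing bookkeeping in Phase 1: each extracted cycle uses $2k-3$ internal vertices, and I must ensure that no single vertex is touched so often that it falls below the bipartite-degree threshold needed for subsequent path-finding and for Phase 2 to go through. The rest is routine.
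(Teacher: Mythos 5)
Your overall strategy is the right one and matches the paper's: explicitly build cycles through each $x\in X$ to drive its residual degree below the threshold, then finish with a greedy Erd\H{o}s--Stone approximate decomposition, and read off $|Y|$ from $e(H)$. The final accounting ($X\cap Y=\emptyset$ from the Phase 1 degree bound, $|Y|\leq 2e(H)/(\eta n)$) is fine. However, there is a genuine gap in Phase 1, located exactly where the paper's proof takes an extra precaution. Your justification for the path-finding is the claim that $\dbip(G_0)\geq 1/2+\eps/2$ is ``maintained throughout'', but this is false by the very design of your procedure: once a vertex $x_i\in X$ has been processed, its degree in $G_0$ has been deliberately reduced to at most $\eta n/2$ (and may drop further). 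Now consider a later vertex $x_j\in X$ with $x_ix_j\in E(G_0)$. To cover this edge you must at some point choose $u=x_i$ as one of the two chosen neighbours of $x_j$, and then the greedy construction of the path $P$ of length $2k-2$ starting at $x_i$ is unjustified: the step adjacent to $x_i$ requires a large (common) neighbourhood of $x_i$, which it no longer has --- indeed $d_{G_0}(x_i)$ could be as small as $1$, in which case no cycle through $x_j$ can use the edge $x_ix_j$ at all. Since $|X|$ may be as large as $\eta^{1/2}n>\eta n$, a vertex $x_j$ could retain more than $\eta n$ such uncoverable edges into $X$, so the while-loop cannot terminate correctly and the conclusion $X\cap Y=\emptyset$ is threatened. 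The paper avoids this by a two-stage construction: it first covers every edge of $G[X]$ by a path of length $2k-1$ through $V(G)\setminus X$ \emph{while all degrees are still large}, and only afterwards pairs up the remaining neighbours of each $x\in X$ --- which by then all lie outside $X$ and hence still satisfy the degree condition. Your argument needs this (or an equivalent ordering device, e.g.\ insisting that every edge between $x$ and a not-yet-processed vertex of $X$ is covered during the earlier of the two vertices' turns) to go through.

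A second, minor, quantitative slip: with at most $4\eta^{1/2}n^2$ Phase 1 cycles, each contributing $2k-3$ internal vertices, the \emph{average} number of times a vertex is used internally is of order $k\eta^{1/2}n$, which already exceeds your claimed cap of $\eta^{2/3}n$ (since $\eta^{1/2}\gg\eta^{2/3}$); no load-balancing scheme can achieve that bound. The cap should be something like $\eta^{1/3}n$, for which the standard ``discard vertices used too often'' argument works (the number of overused vertices is then $O(k\eta^{1/6}n)\ll\eps n$) and which still gives a degree loss of $O(k\eta^{1/3}n)\ll\eps n$ per vertex, so nothing downstream is affected.
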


\begin{proof}
The first step is to cover the edges in $G[X]$ by edge-disjoint copies of $C_{2k}$. That is, for each edge $xy\in E_G(X)$, find a path of length $2k-1$ between the $x$ and $y$ in $G-G[X]$ ($x$ and $y$ lie in different vertex classes so any path between them is necessarily odd). In total we must find at most $\eta n^2$ paths. Since $\dbip(G-G[X])\geq 1/2+3\eps/4$, we may choose these paths to be edge-disjoint and use each vertex at most $\eta^{1/3}n$ times. These paths, together with $E_G(X)$ give an edge-disjoint collection $\cF_X$ of copies of $C_{2k}$ with $\Delta(\bigcup \cF_X)\leq 2\eta^{1/3}n$.

Consider the graph $G':=G\setminus \bigcup \cF_X$. Our next step is to cover all but at most one of the remaining edges incident at each vertex in $X$. For each $x\in X$, pair up the vertices in $N_{G'}(x)$, leaving at most one vertex. Note that both vertices in any pair lie in the same vertex class. Since $\dbip(G'\setminus X)\geq 1/2+\eps/2$, we can find edge-disjoint paths of (even) length $2k-2$ between each pair in $G'\setminus X$.%
\COMMENT{We are required to find at most $4\eta^{1/2}n^2$ paths and each vertex contained in at most $\eta^{1/2}n$ pairs.}
Each path combines with two edges incident at $X$ to form a copy of $C_{2k}$.
Thus we obtain a collection $\cF_X'$ of edge-disjoint copies of $C_{2k}$ which, together with $\cF_X$, cover all but at most one edge incident at each $x\in X$.

Let $H':=G-\bigcup (\cF_X\cup \cF_X')$ and note that $d_{H'}(x) \leq 1$ for all $x\in X$. Use the Erd\H{o}s-Stone theorem to greedily find an $\eta^3$-approximate $C_{2k}$-decomposition of $H'$ which we will denote by $\cF$. Let $H:=H'-\bigcup \cF$ and note that $G-H$ has a $C_{2k}$-decomposition given by $\cF_X\cup \cF_X'\cup \cF$. If $Y:=\{x\in V(G):d_H(x)>\eta n\}$, then $|Y|\leq 2e(H)/(\eta n)\leq \eta n$ and $X\cap Y=\emptyset$.
\end{proof}

We use Lemma~\ref{lem:careofbad:bip} to prove the following result which finds a $C_{2k}$-decomposition of $G$ so that every vertex has low degree in the remainder.

\begin{lemma} \label{lem:boundmaxdegree:bip}
Let $k\in \N$, $k\geq 2$ and $1/n \ll \eps,1/k$.
Let $G=(A,B)$ be a bipartite graph with $n\leq |A|,|B|\leq 3n$ and $\dbip(G)\geq 1/2+\eps$. Then $G$ has an approximate $C_{2k}$-decomposition $\cF$ such that $\Delta(G-\bigcup \cF)\leq \eps n$.
\end{lemma}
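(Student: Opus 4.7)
The plan is to reserve a sparse random subgraph $G^* \subseteq G$, apply Lemma~\ref{lem:careofbad:bip} to $G - G^*$ to obtain an approximate $C_{2k}$-decomposition whose leftover has only a small set $Y_1$ of possibly high-degree vertices, and then use the reserved edges of $G^*$ to absorb the remaining edges incident to $Y_1$ into additional copies of $C_{2k}$. Set $p := \eps/100$ and form $G^*$ by including each edge of $G$ independently with probability $p$. By Lemma~\ref{lem:chernoff} and a union bound, with positive probability we have $\eps n/300 \leq d_{G^*}(v) \leq \eps n/30$ for every $v \in V(G)$; in particular $\Delta(G^*) \leq \eps n/30$ and $\dbip(G - G^*) \geq 1/2 + \eps/2$. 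Fix such a $G^*$.

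Choose a constant $\eta_0$ with $1/n \ll \eta_0 \ll \eps, 1/k$ and apply Lemma~\ref{lem:careofbad:bip} to $G - G^*$ with $X = \emptyset$ and parameter $\eta_0$. This yields an edge-disjoint family $\cF_1$ of $C_{2k}$-copies in $G - G^*$ such that the residual $H_1 := (G - G^*) - \bigcup \cF_1$ satisfies $|Y_1| \leq \eta_0 n$, where $Y_1 := \{v \in V(G) : d_{H_1}(v) > \eta_0 n\}$. I would then process each $v \in Y_1$ in turn: pair up the current $H_1$-neighbors of $v$ (leaving at most one unpaired when $d_{H_1}(v)$ is odd), noting that all such neighbors lie in the class opposite to $v$. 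For each pair $(u_1, u_2)$, greedily find a path $P$ of length $2k - 2$ from $u_1$ to $u_2$ in $G^* \setminus \{v\}$, edge-disjoint from all previously chosen paths, and collect the cycle $\{v u_1, v u_2\} \cup P$ into $\cF_2$. Greediness succeeds because the total number of paths required is at most $\sum_{v \in Y_1} d_{H_1}(v)/2 \leq 2 \eta_0 n^2$, using $O(k \eta_0 n^2)$ edges of $G^*$ distributed over $n$ vertices; capping per-vertex usage at $\eta_0^{1/2} n$, every path-extension step has at least $\eps n/300 - \eta_0^{1/2} n$ valid continuations in $G^*$, and the closing step can exploit the $\Omega(\eps^3 n)$ common $G^*$-neighbours of any two same-class vertices (again by Chernoff, since they share at least $\eps n$ common neighbours in $G$), handling even the $k = 2$ case.

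Finally, set $\cF := \cF_1 \cup \cF_2$. Edge-disjointness holds because $\cF_1 \subseteq G - G^*$ while each cycle of $\cF_2$ uses two edges of $H_1 \subseteq (G - G^*) - \bigcup \cF_1$ and $2k - 2$ edges of $G^*$, with paths in $\cF_2$ chosen edge-disjoint by construction. The leftover $L := G - \bigcup \cF$ splits as $L_1 \cup L_2$ with $L_1 \subseteq H_1$ and $L_2 \subseteq G^*$. For any $w$: if $w \in Y_1$ then $d_{L_1}(w) \leq 1$ (the possibly unpaired edge); if $w \notin Y_1$ then $d_{L_1}(w) \leq d_{H_1}(w) + |Y_1| \leq 2 \eta_0 n$, since the surviving $H_1$-edges at $w$ are either to non-$Y_1$ vertices (at most $d_{H_1}(w) \leq \eta_0 n$ of these) or are the unpaired edges contributed by its $Y_1$-neighbours (at most $|Y_1| \leq \eta_0 n$). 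Combined with $d_{L_2}(w) \leq \Delta(G^*) \leq \eps n/30$, this gives $\Delta(L) \leq \eps n$ as required. The only real obstacle is confirming that the greedy path-finding in the cleanup step never fails; this is routine given the quadratic gap between $|E(G^*)| = \Theta(\eps n^2)$ and the total edge demand $O(k \eta_0 n^2)$ when $\eta_0 \ll (\eps/k)^2$.
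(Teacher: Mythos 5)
Your proof is correct, but it takes a genuinely different route from the paper's. The paper's proof fixes $s,t$ with $K_s$ having a $K_t$-decomposition (Proposition~\ref{prop:clique dec}), randomly partitions $V(G)$ into $V_1,\dots,V_s$ so that the degree condition is inherited, writes $G[\cP]$ as an edge-disjoint union of graphs $G_i$ indexed by the copies of $K_t$, and applies Lemma~\ref{lem:careofbad:bip} to each $G_i$ in turn, feeding the bad set $Y_i$ produced at one step into the protected set $X_{i+1}$ of the next; since each vertex then has large residual degree in at most one $G_i$, summing over $i$ gives the max-degree bound, and the edges inside the parts $V_i$ are simply discarded because $|V_i|\leq \eps n/2$. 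You instead apply Lemma~\ref{lem:careofbad:bip} only once, with $X=\emptyset$, to $G$ minus a random reservoir $G^*$, and then repair the few bad vertices in $Y_1$ by completing their leftover edges to copies of $C_{2k}$ via greedy paths of length $2k-2$ inside $G^*$. Both arguments work. The paper's scheme avoids any path-embedding and transfers verbatim to the expander setting (Lemma~\ref{lem:boundmaxdegree}); yours is shorter and dispenses with the $K_s$/$K_t$ machinery, at the cost of the greedy argument in $G^*$ — which does go through, since the edge demand $O(k\eta_0 n^2)$ is far below $e(G^*)=\Theta(\eps n^2)$ once $\eta_0\ll(\eps/k)^2$, and the per-vertex usage cap plus the $\Omega(\eps^3 n)$ pairwise common $G^*$-neighbourhoods handle every extension and closing step, including $k=2$. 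Two small points to tighten: you should fix a single $G^*$ satisfying the degree bounds \emph{and} the pairwise common-neighbourhood bounds simultaneously (both hold with high probability by Lemma~\ref{lem:chernoff} and a union bound over $O(n^2)$ pairs) before starting the deterministic argument; and since $L_1\subseteq H_1$ and covering edges only lowers degrees, $d_{L_1}(w)\leq d_{H_1}(w)\leq \eta_0 n$ for $w\notin Y_1$ directly — the extra $+|Y_1|$ term in your accounting is unnecessary, though harmless.
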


\begin{proof}
Choose $s, t\in \N$ and $\eta>0$ such that 
$$1/n\ll \eta \ll 1/s  \ll 1/t \ll \eps,1/k$$
and $K_s$ has a $K_t$-decomposition ($s$ and $t$ exist by Proposition~\ref{prop:clique dec}). 
Let $\cP=\{V_1,\dots,V_s\}$ be a partition of $V(G)$ satisfying the following for all $1\leq i\leq s$ and each $X\in \{A,B\}$:
\begin{enumerate}[\rm(i)]
\item $|V_i\cap X|= \lfloor |X|/s\rfloor$ or $\lceil |X|/s\rceil$;\label{rand1*:bip}
\item $d_G(x,V_i\cap X) \ge (1/2+2\eps/3)|V_i\cap X|$ for all $x\in V(G)\setminus X$.\label{rand2*:bip}
\end{enumerate}
To see $\cP$ exists, consider random equitable partitions $V_1^A, \dots, V_s^A$ of $A$ and $V_1^B, \dots, V_s^B$ of $B$ and let $V_i:=V_i^A\cup V_i^B$. Lemma~\ref{lem:chernoff} implies that this partition satisfies \eqref{rand2*:bip} with probability at least $3/4$.%
\COMMENT{Consider any $x\in V(G)$, wlog $x\in B$. Note that $d_G(x, V_i\cap A)=|V_i^A\cap N_G(x)|$ has hypergeometric distribution with parameters $|A|$, $|V_i^A|$ and $d_G(x)$.
By Lemma~\ref{lem:chernoff}, $$\pr(d_G(x, V_i)< (1/2+2\eps/3)|V_i^A|)\leq 2e^{-2(\eps n/4s)^2/(n/s)}\leq 2e^{-\eps^2n/8s}\leq 1/n^2.$$}

Since $|V_i|\leq \eps n/2$ for all $1\leq i\leq s$, it suffices to show that $G[\cP]$ has an approximate $C_{2k}$-decomposition $\cF$ such that $\Delta(G[\cP]-\bigcup \cF)\leq \eps n/2$ .
Let $\{T_1,\dots,T_\ell\}$ be a $K_t$-decomposition of $K_s$, where $V(K_s)=\{1, \dots,s\}$. For each $1\leq i\leq \ell$, define $G_i:=\bigcup_{jk \in E(T_i)}G[V_j,V_k]$, so the $G_i$ decompose $G[\cP]$.
For each $1\leq i\leq \ell$, each $X\in \{A,B\}$ and all $x\in V(G_i)\setminus X$, we have
\begin{align*}
d_{G_i}(x) \geq (t-1)(1/2+2\eps/3)\lfloor |X|/s\rfloor \geq (1/2+\eps/2)t\lceil |X|/s\rceil\geq (1/2+\eps/2)|V(G_i)\cap X|,
\end{align*}
by \eqref{rand1*:bip} and \eqref{rand2*:bip}. So $\dbip(G_i)\geq 1/2+\eps/2$. We also note that 
$$n':=t\lfloor n/s\rfloor \leq |V(G_i)\cap A|, |V(G_i)\cap B|\leq t\lceil 3n/s\rceil\leq 4n'.$$

Let $X_1:=\emptyset$. For each $1\leq i \leq \ell$ in turn, apply Lemma~\ref{lem:careofbad:bip} (with $G_i$, $\eps/2$ and $X_i\cap V(G_i)$ playing the roles of $G$, $\eps$ and $X$) to find $H_i\subseteq G_i$ such that $G_i-H_i$ is $C_{2k}$-decomposable, $d_{H_i}(x) \le \eta n'$ for all $x\in X_i$ and $|Y_i| \leq \eta n'$, where $Y_i:=\{x\in V(G_i): d_{H_i}(x)> \eta n'\}$. Let $X_{i+1}:=X_i\cup Y_i$. Note that, for all $1\leq i\leq \ell$,
$|X_i| \le s^2\eta n'\le \eta^{1/2}n'$ so we can indeed use Lemma~\ref{lem:careofbad:bip}. Let $H:=\bigcup_{i=1}^{\ell}H_i$ and consider any $x\in V(G)$. We know that
$$d_H(x) \le \ell\eta n' + 4n' \le (s^2\eta+4)tn/s\le \eps n/2,$$
since
$d_{H_i}(x)\leq\eta n'$ for all but at most one $1\leq i\leq \ell$.
\end{proof}

The following proposition takes a subset $R$ of $V(G)$ and covers all the edges in a sparse subgraph $H$ of $G[\overline R]$ using copies of $C_{2k}$ without using any vertex too many times. It is an analogue of Proposition~5.10 in~\cite{stef} and the proof is identical, so we omit the details.%
\COMMENT{\begin{proof}
Let $e_1,\dots,e_m$ be an enumeration of $E(H)$. We aim to find edge-disjoint copies $F_1,\dots,F_m$ of $C_{2k}$ in $G$ such that $F_i$ contains $e_i$ and $V(F_i)\cap L =V(e_i)$. Suppose we have already found $F_1,\dots,F_{j-1}$ for some $1\leq j\leq m$. Let $G_{j-1}:=\bigcup_{i=1}^{j-1} F_i$ and suppose that $\Delta(G_{j-1}) \le \sqrt{\gamma}n+2$. Let $X:=\{x\in V(G):d_{G_{j-1}}(x)>\sqrt{\gamma} n\}$. Note that $X\cap L=\emptyset$, since $d_{G_{j-1}}(x)\le 2\Delta(H) \le \sqrt{\gamma} n$ for all $x\in L$.%
\COMMENT{Since $V(F_i)\cap L =V(e_i)$, $x$ is only contained in $F_i$'s when $e_i$ is incident to $x$}
We have $$|X|\sqrt\gamma n\leq 2e(G_{j-1}) \le 4ke(H) \le 20k\gamma n^2,$$
giving $|X| \le 20k\sqrt{\gamma}n \le \gamma^{1/3}\mu n$. Let $G':=(G-G_{j-1})[(R\setminus X) \cup V(e_j)]$.
Since $\dbip\geq 1/2+\mu/2$, we can find a copy $F_j$ of $C_{2k}$ in $G'$ that contains $e_j$. Moreover, $F_j$ avoids $X$ so $\Delta(G_j)\le \sqrt{\gamma}n+2$. Letting $J:=\bigcup_{i=1}^{m}(F_i-e_i)$ completes the proof.
\end{proof}}

\begin{prop} \label{prop:cover-sparse-graph:bip}
Let $k\in \N$, $k\geq 2$ and $1/n \ll \gamma \ll \mu, 1/k$. Let $G=(A,B)$ be a bipartite graph with $n\leq |A|,|B|\leq 5n$. Let $V(G)=L\cupdot R$ such that $|R\cap X|\ge \mu n$ and $d_G(x, R\cap X)\geq (1/2+\mu)|R\cap X|$ for each $X\in \{A,B\}$ and all $x\in V(G)\setminus X$.
Let $H$ be any subgraph of $G[L]$ such that $\Delta(H) \le \gamma n$. Then there exists $J\subseteq G$ such that $J[L]$ is empty, $J\cup H$ is $C_{2k}$-decomposable and $\Delta(J)\le \mu^2 n$.
\end{prop}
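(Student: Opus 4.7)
My plan is to enumerate $E(H) = \{e_1, \ldots, e_m\}$ and, for each $j$ in turn, greedily find an edge-disjoint copy $F_j$ of $C_{2k}$ in $G$ such that $F_j$ contains $e_j$ and $V(F_j) \cap L = V(e_j)$ --- that is, the remaining $2k-2$ vertices of $F_j$ all lie in $R$. Setting $J := \bigcup_{j=1}^{m}(F_j - e_j)$ will then satisfy $J[L] = \emptyset$ (the only edge of any $F_j$ inside $L$ is $e_j$ itself, which gets removed), and $\{F_1,\ldots,F_m\}$ will be a $C_{2k}$-decomposition of $J \cup H$.

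To control $\Delta(J)$, I maintain throughout the invariant that $G_{j-1} := \bigcup_{i<j} F_i$ has maximum degree at most $\sqrt\gamma\, n + 2$. Assuming this holds before step $j$, I bound the ``bad'' set $X_j := \{v : d_{G_{j-1}}(v) > \sqrt\gamma\, n\}$ by double counting: since $e(G_{j-1}) \leq 2k\, e(H) \leq 10k\gamma n^2$ (using $|V(H)| \le 10n$), a standard averaging gives $|X_j| \leq 20k\sqrt\gamma\, n \leq \gamma^{1/3}\mu n$, which is tiny because $\gamma \ll \mu, 1/k$. Moreover $X_j \cap L = \emptyset$: any $v \in L$ appears only in copies $F_i$ with $e_i$ incident to $v$, so $d_{G_{j-1}}(v) \leq 2\Delta(H) \leq 2\gamma n < \sqrt\gamma\, n$. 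In particular, the two endpoints of $e_j$ lie in $V(G) \setminus X_j$.

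To build $F_j$, write $e_j = xy$ (with $x,y$ in opposite bipartition classes) and seek a path $P_j$ of length $2k-1$ from $x$ to $y$ whose internal vertices lie in $R \setminus X_j$ and which is edge-disjoint from $G_{j-1}$. I construct $P_j$ from both ends: first fix two vertices $v_{2k-2}, v_{2k-3} \in R \setminus X_j$, adjacent to $y$ and to each other respectively, with the corresponding edges in $G - G_{j-1}$; then greedily extend from $x$ through $R \setminus X_j$ to meet $v_{2k-3}$. At each step the current endpoint $v$, in class $Z \in \{A,B\}$, has at least $(1/2 + \mu)|R \cap Z'| \geq (1/2+\mu)\mu n$ neighbours in $R \cap Z'$ by hypothesis, from which we must forbid at most $|X_j| + \sqrt\gamma\, n + O(k) \ll \mu n$ vertices, leaving $\geq \mu^2 n$ admissible choices. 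Once $F_j$ is built, each of its $2k$ vertices gains degree exactly $2$ in the used graph, so $\Delta(G_j) \leq \sqrt\gamma\, n + 2$, preserving the invariant.

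The only mildly delicate step is the very last connection when $k$ is small: for $k=2$ there is only a single interior vertex pair, and one must find $u_1 \in N_G(x) \cap N_G(u_2) \cap (R \cap B) \setminus X_j$ with both relevant edges outside $G_{j-1}$. This is handled by inclusion-exclusion, which yields $|N_G(x, R\cap B) \cap N_G(u_2, R\cap B)| \geq 2\mu|R \cap B| \geq 2\mu^2 n$, comfortably exceeding the forbidden set of size $\ll \mu n$. Once the invariant has been maintained through all $m$ iterations, $\Delta(J) \leq \Delta(G_m) \leq \sqrt\gamma\, n + 2 \leq \mu^2 n$ (since $\gamma \ll \mu$), which completes the proof.
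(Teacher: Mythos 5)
Your proof is correct and takes essentially the same route as the paper's (which is deferred to Proposition~5.10 of \cite{stef}): the same greedy edge-by-edge embedding of copies of $C_{2k}$ with all interior vertices in $R$, the same invariant $\Delta(G_{j-1})\le\sqrt\gamma\,n+2$, the same double-counting bound on the saturated set $X_j$ together with the observation $X_j\cap L=\emptyset$, and the same final definition $J:=\bigcup_i(F_i-e_i)$. The only difference is that you spell out the path-finding (including the inclusion--exclusion step for the last common neighbour), which the paper compresses into the remark that $(G-G_{j-1})[(R\setminus X_j)\cup V(e_j)]$ still has $\dbip\ge 1/2+\mu/2$.
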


We now use each of the results obtained so far to prove Lemma~\ref{lem:coverdown:bip}. This lemma forms the basis of the induction proof of Lemma~\ref{lem:nearoptimal:bip}.

\begin{lemma}\label{lem:coverdown:bip}
Let $k\in \N$, $k\geq 2$ and $1/n\ll \mu\ll 1/k$. Let $G=(A,B)$ be a bipartite graph with $n\leq |A|,|B|\leq 3n$. Let $U\subseteq V(G)$ with $|U\cap A|=\lfloor \mu |A|\rfloor$ and $|U\cap B|=\lfloor \mu |B|\rfloor$. Suppose $\dbip(G)\geq 1/2+2\mu$ and $d_G(x, U\cap X)\geq (1/2+\mu)|U\cap X|$ for each $X\in \{A,B\}$ and all $x\in V(G)\setminus X$. Then, if $2\mid d_G(x)$ for all $x\in V(G)\setminus U$, there exists a collection $\cF$ of edge-disjoint copies of $C_{2k}$ such that every edge in $G-G[U]$ is covered and $\Delta (\bigcup \cF[U])\leq \mu^3 |U|$.
\end{lemma}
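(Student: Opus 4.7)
Set $L := V(G) \setminus U$ and fix $\gamma$ with $1/n \ll \gamma \ll \mu, 1/k$. The strategy is to run an approximate $C_{2k}$-decomposition on $G-G[U]$ via Lemma~\ref{lem:boundmaxdegree:bip}, and then to absorb the sparse leftover by routing through the (untouched) dense graph $G[U]$.

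First, I check that $\dbip(G-G[U]) \ge 1/2 + \mu$: for $v \in L$ its degree is unchanged, while for $v \in U$ the degree drops by at most $|U \cap X| \le \mu|X|$. Applying Lemma~\ref{lem:boundmaxdegree:bip} with error $\gamma$ produces an edge-disjoint family $\cF_0$ of $C_{2k}$'s in $G-G[U]$ whose leftover $H_0 := (G-G[U]) - \bigcup \cF_0$ has $\Delta(H_0) \le \gamma n$. Since $d_G(v)$ is even for every $v \in L$ and each $C_{2k}$ changes the degree of every vertex it contains by $2$, $d_{H_0}(v)$ is even for every $v \in L$. Writing $H_0 = H_0^{LL} \cupdot H_0^{LU}$ for the partition into edges inside $L$ versus those across to $U$, summing $L$-degrees shows $|H_0^{LU}|$ is even.

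Second, I cover $H_0^{LU}$ by routing through $G[U]$, which $\cF_0$ leaves completely intact. For each $u \in L$ I pair up the $H_0^{LU}$-edges at $u$, and for each pair $\{uv, uv'\}$ greedily embed a cycle $u\text{-}v\text{-}w_1\text{-}\cdots\text{-}w_{2k-3}\text{-}v'\text{-}u$ with $w_1,\dots,w_{2k-3} \in U$. The density hypothesis $d_G(x, U\cap X) \ge (1/2+\mu)|U\cap X|$ gives $\Omega(\mu n)$ valid choices at each step, so the greedy embeddings can be made edge-disjoint from each other and from $\cF_0$ while each $U$-vertex is used in $O(\gamma n)$ such cycles. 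When $d_{H_0^{LU}}(u)$ is odd for some $u$, the remaining unpaired edges (of which there are an even number by the parity of $|H_0^{LU}|$) are paired off globally and each such global pair is covered by a single $C_{2k}$ that consumes four $H_0^{LU}$-edges and $2k-4$ edges of $G[U]$.

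Third, I apply Proposition~\ref{prop:cover-sparse-graph:bip} to the remaining $H_0^{LL}$-edges (a subgraph of $G[L]$ with $\Delta \le \gamma n$), using the proposition's parameter $\mu^2$ in place of $\mu$. This is legitimate because our hypothesis $(1/2+\mu)$ is stronger than $(1/2+\mu^2)$ and $|U\cap X| \ge \mu^2 n$, and delivers $J$ with $J[L]$ empty, $J \cup H_0^{LL}$ $C_{2k}$-decomposable, and $\Delta(J) \le \mu^4 n \le \tfrac12 \mu^3|U|$. Combined with the Step~2 contribution of $O(\gamma n) \le \tfrac12 \mu^3|U|$ per $U$-vertex (for $\gamma$ small enough), this yields $\Delta\bigl((\bigcup\cF)[U]\bigr) \le \mu^3|U|$ as required.

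The main obstacle is edge-disjointness across the three stages: the Step~2 greedy choices must avoid $\cF_0$'s edges in $G[L,U]$, and Step~3 must avoid both $\cF_0$ and Step~2's cycles. This is handled by re-examining the proof of Proposition~\ref{prop:cover-sparse-graph:bip}: at each iteration the proof needs to find a $C_{2k}$ in a subgraph of $G$ on $U$ plus two $L$-vertices, and in our situation the previously-used edges contribute only $O(\gamma n)$ forbidden edges at each vertex, which is negligible compared with the $\Omega(\mu^2|U \cap X|)$ candidates provided by $G[U]$'s untouched density.
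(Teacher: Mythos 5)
Your overall architecture (approximate decomposition of $G-G[U]$, then routing the leftover through $G[U]$) is the right shape, but there is a fatal gap at Step~3, and its root cause is that you never reserve any edges of $G[L,U]$ before running the approximate decomposition. After Step~1 every edge of $G[L,U]$ lies either in $\bigcup\cF_0$ or in $H_0^{LU}$, and after Step~2 all of $H_0^{LU}$ has been covered as well; so when you reach Step~3 there is not a single unused edge between $L$ and $U$. Any copy of $C_{2k}$ covering an edge $xy\in H_0^{LL}$ whose other vertices lie in $U$ must use at least two edges of $G[L,U]$ (one entering $U$ at $x$, one at $y$), so Proposition~\ref{prop:cover-sparse-graph:bip} cannot be applied. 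Your claim that "the previously-used edges contribute only $O(\gamma n)$ forbidden edges at each vertex" is false precisely at these entry edges: at an $L$-vertex, \emph{all} of its $\Omega(\mu n)$ edges into $U$ have been consumed. This is exactly why the paper's proof first sets aside a reserved graph $R=\bigcup_i G[V_i,V(G_W^i)]$ (built from partitions $V_1,\dots,V_M$ of $U$ and $W_1,\dots,W_m$ of $W=L$, so that $\Delta(R)\le 7\xi n$), runs the approximate decomposition only on $G-(G[U]\cup R)$, and then feeds each untouched $R_i$ into Proposition~\ref{prop:cover-sparse-graph:bip} as the supply of fresh $L$--$U$ edges; whatever survives of $R$ is absorbed together with $H[U,W]$ in the final parity step.

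There is a second, related gap in Step~2: evenness of $d_{H_0}(u)$ for $u\in L$ does not give evenness of $d_{H_0^{LU}}(u)$ (the discrepancy is $d_{H_0^{LL}}(u)$), and your repair for the odd vertices does not work. A copy of $C_{2k}$ passing through two distinct vertices $u_1,u_2\in L$ with all remaining vertices in $U$ must use two $G[L,U]$-edges at each of $u_1$ and $u_2$, but each odd vertex has only one unpaired edge of $H_0^{LU}$ remaining and no other available edge into $U$, so the "global pair consuming four $H_0^{LU}$-edges" cannot be realised. The paper sidesteps this by covering the $L$--$L$ leftover \emph{first}: only then is every remaining edge at $w\in L$ an edge into $U$, so the evenness of $d_{G''}(w)$ makes the local pairing at each $w$ go through with no exceptional vertices.
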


\begin{proof}
Choose constants $\gamma, \xi$ such that
$1/n\ll\gamma\ll \xi \ll\mu\ll 1/k$.
Let $W:=V(G)\setminus U$, $m:=\lceil \xi^{-1} \rceil$ and $M:=\binom{m+1}{2}$. 
Let $V_1,\dots,V_M$ be a partition of $U$ such that for all $1\leq i\leq M$, each $X\in \{A,B\}$ and all $x\in V(G)\setminus X$:
\begin{enumerate}
\item $d_G(x, V_i\cap X)\geq (1/2+\mu/2)|V_i\cap X|$;\label{coverdown1:bip}
\item $|V_i\cap X|=\lfloor |U\cap X|/M \rfloor$ or $\lceil |U\cap X|/M\rceil$.%
\COMMENT{so $\lfloor \mu n\rfloor/M-1\leq |V_i\cap A|, |V_i\cap B|\leq \lfloor \mu 3n\rfloor/M+1\leq 4(\lfloor \mu n\rfloor/M-1)$}
\end{enumerate}
To see that such a partition exists, consider random equipartitions $V_1^A, \dots, V_M^A$ of $U\cap A$ and $V_1^B, \dots, V_M^B$ of $U\cap B$. Let $V_i:=V_i^A\cap V_i^B$. Lemma~\ref{lem:chernoff} implies that this partition satisfies \eqref{coverdown1:bip} with probability at least $3/4$.%
\COMMENT{Note $M=\binom{m+1}{2}\leq (1/\xi+2)^2/2\leq 5/\xi^2$.
Consider any $x\in V(G)$, wlog $x\in B$. $d_G(x, V_i)$ has hypergeometric distribution with parameters: $|U\cap A|$, $|V_i^A|$ and $d_G(x, U)$. 
Lemma~\ref{lem:chernoff} gives
$$\pr(d_G(x, V_i)< (1/2+\mu/2)|V_i^A|)\le 2 e^{-2\mu^2|U\cap A|/9M}\leq 2e^{-\xi^2\mu^3n/25}\leq 1/n^2.$$ 
By summing over all choices of $i$ and $x$, we see that with probability at least $1-6M/n\geq 3/4$ the partition chosen in this way works.}

Let $W_1, \dots, W_m$ be a partition of $W$ such that $W_1\cap A, \dots, W_m\cap A$ and $W_1\cap B, \dots, W_m\cap B$ are equipartitions of $W\cap A$ and $W\cap B$ respectively.%
\COMMENT{so $(1-\mu)n/M-1\leq |W_i\cap A|, |W_i\cap B|\leq 3n/M\leq 4((1-\mu)n/M-1)$}
Let $G_W^1, \dots, G_W^M$ be an enumeration of the $M$ graphs of the form $G[W_i]$ or $G[W_i,W_j]$. Note $G[W]=\bigcup_{i=1}^MG_W^i$ and, for all $1\leq i\leq M$, 
\begin{equation}\label{eq:spargraph1}
|V(G_W^i)\cap A|, |V(G_W^i)\cap B|\leq 2(3n/m+1)\leq 7\xi n
\end{equation}
For each $1\leq i\leq M$, let $R_i:=G[V_i, V(G_W^i)]$. Let $R:=\bigcup_{i=1}^MR_i$. For each $v\in V_i$ we see that $d_R(v)\leq 7\xi n$ by \eqref{eq:spargraph1} and for each $v\in W$, we have $d_R(v)\leq m((3n\mu/M)+1)\leq 7\xi n$.%
\COMMENT{$m((3n\mu/M)+1)\leq m(7n/2M)=7n/(m+1)\leq 7\xi n$}
Thus $\Delta(R)\leq 7\xi n$. 

Let $G':=G-(G[U]\cup R)$. Since $|U\cap A|=\lfloor \mu |A|\rfloor$, $|U\cap B|=\lfloor \mu |B|\rfloor$ and $\Delta(R)\leq 7\xi n$, we note that $\dbip(G')\geq 1/2+\mu/2$. So, by Lemma~\ref{lem:boundmaxdegree:bip} (with $\gamma$ playing the role of $\eps$), $G'$ has an approximate $C_{2k}$-decomposition $\cF_1$ such that $H:=G'-\bigcup \cF_1$ satisfies $\Delta(H)\leq \gamma n$.

We now use $R$ and Proposition~\ref{prop:cover-sparse-graph:bip} to cover the edges in $H[W]$. For each $1\leq i\leq M$, let $H_i:=H[W]\cap G_W^i$ (so $H[W]=\bigcup H_i$) and $G_i:=G[V_i]\cup R_i\cup H_i$. 
Observe that $G_i$ is a bipartite graph and $V(G_i)=V_i\cup V(G_W^i)$. Let us check that $G_i$ satisfies the conditions of Proposition~\ref{prop:cover-sparse-graph:bip} (with $G_i$, $\sqrt\gamma$, $\xi^2$ and $V_i$ playing the roles of $G$, $\gamma$, $\mu$ and $R$).
Let $n_i:=\min\{|V(G_i)\cap A|, |V(G_i)\cap B|\}$, then
$$n_i\leq |V(G_i)\cap A|, |V(G_i)\cap B|\leq 4n_i.$$
Note that
\begin{align*}
n_i\leq |V(G_i)\cap A|= |V_i\cap A|+|V(G_W^i)\cap A|\stackrel{\eqref{eq:spargraph1}}{\leq} 3\mu n/M+7\xi n\leq 8\xi n
\end{align*}
which gives $n\geq n_i/8\xi$. We use this to see that
$$|V_i\cap A|,|V_i\cap B| \geq \mu n/2M \geq \mu\xi^2 n/2 \geq \xi^2 n_i.$$
Also $\Delta(H_i)\leq \gamma n\leq \sqrt\gamma n_i$ and \eqref{coverdown1:bip} implies that 
$d_{G_i}(x, V_i\cap X)\geq (1/2+\xi^2)|V_i\cap X|$ for each $X\in \{A,B\}$ and all $x\in V(G_i)\setminus X$. So we may apply Proposition~\ref{prop:cover-sparse-graph:bip} to find $J_i\subseteq G_i$ such that $J_i[V(G_i)\setminus V_i]$ is empty, $J_i\cup H_i$ is $C_{2k}$-decomposable and $\Delta(J_i)\le \xi^4 n_i$. Let $J:=\bigcup_{i=1}^M J_i$. Then $J\cup H[W]$ has a $C_{2k}$-decomposition $\cF_2$ and $\Delta(J)\leq \xi n$.

We must now cover the remaining edges in $H[U,W]\cup R$. Let $G'':=G-\bigcup (\cF_1\cup \cF_2)$. Note that $G''[W]$ is empty and
$$\Delta(G'')\leq \Delta(H)+\Delta(R)\leq \gamma n+7\xi n\leq 8\xi n.$$
Since $\Delta(J)\leq \xi n$, $\dbip(G''[U])\geq 1/2+\mu/2$. For each $w\in W$, $d_{G''}(w)$ is even, so we can pair up the vertices in $N_{G''}(w)$ arbitrarily and let $P$ denote the list of pairs of all neighbours of $W$. Each vertex in $U$ appears in at most $\Delta(G'')\leq \sqrt\xi |U|$ of the pairs in $P$ and 
$|P|\leq \Delta(G'')3n\leq \sqrt\xi |U|^2$. The vertices in each pair lie in the same vertex class so we can find paths of (even) length $2k-2$ between each pair so that these paths are edge-disjoint and no vertex is used more than $\mu^3|U|/4$ times.%
\COMMENT{We need to find $\leq \sqrt\xi |U|^2$ paths. Let $B$ be the set of vertices used at least $\mu^4|U|$ times as an internal vertex on one of these paths. At any stage $|B|\leq 2k\sqrt\xi |U|^2/\mu^4|U|\leq \mu|U|/20$. Since $\dbip(G''[U])\geq 1/2+\mu/2$ we are always able to choose the interior vertices to avoid $B$. Thus getting $\Delta(\bigcup\cF_3)\leq \sqrt\xi |U|+2(\mu^4|U|+2)\leq \mu^3 |U|/2$.}
We obtain a collection $\cF_3$ of edge-disjoint copies of $C_{2k}$ which cover the edges of $G''-G''[W]$ such that $\Delta(\bigcup\cF_3)\leq \mu^3 |U|/2$.
Let $\cF:=\cF_1\cup \cF_2\cup\cF_3$. Then
$$\Delta \big(\bigcup\cF[U]\big)\leq \Delta(J)+\Delta\big(\bigcup\cF_3\big) \leq \mu^3|U|$$
and $\cF$ covers every edge of $G-G[U]$.
\end{proof}

Finally, we use Lemma~\ref{lem:coverdown:bip} and induction to prove Lemma~\ref{lem:nearoptimal:bip}.

\begin{proofof}{Lemma~\ref{lem:nearoptimal:bip}}
If $\ell=0$, we can set $H_\ell:=G$, so we assume $\ell\geq 1$. We begin by observing that for any $0\leq i\leq \ell$, we have $2\mu^i n/3\leq \mu^i n-1/(1-\mu)\leq |U_i\cap A|, |U_i\cap B|\le 2\mu^i n$. 
The lemma will follow from the following statement which we will prove by induction on $\ell$.

\begin{quote}
\textit{Let $G=(A,B)$ be a $2$-divisible bipartite graph with $\dbip\geq 1/2+3\mu$ and $|A|\leq |B|\leq 3|A|$. Let $U_1\subseteq V(G)$ with $|U_1\cap A|=\lfloor \mu |A|\rfloor$ and $|U_1\cap B|=\lfloor \mu |B|\rfloor$. Suppose that $d_G(x, U_1\cap X)\geq (1/2+7\mu/2)|U_1\cap X|$ for each $X\in \{A,B\}$ and all $x\in V(G)\setminus X$. Let $U_1 \supseteq \dots \supseteq U_\ell$ be a $(1/2+4\mu,\mu, m)$-vortex respecting $(U_1\cap A, U_1\cap B)$ in $G[U_1]$ such that $|U_i\cap B|\leq 3|U_i\cap A|$, for each $1\leq i\leq \ell$. Then there exists $H_\ell\subseteq G[U_\ell]$ such that $G-H_\ell$ is $C_{2k}$-decomposable.}
\end{quote}

If $\ell=1$, the statement follows directly from Lemma~\ref{lem:coverdown:bip} applied to $G$ and $U_1$. Assume then that $\ell\geq 2$ and the statement holds for $\ell-1$. Let $G':=G-G[U_2]$ and note that $\dbip(G')\geq 1/2+2\mu$
and $d_{G'}(x,U_1\cap X)\geq (1/2+\mu)|U_1\cap X|$ for each $X\in \{A,B\}$ and all $x\in V(G)\setminus X$.
Furthermore, for all $x\in V(G')\setminus U_1$, $d_{G'}(x)=d_G(x)$ so $2\mid d_{G'}(x)$. Apply Lemma~\ref{lem:coverdown:bip} to find an edge-disjoint collection $\cF$ of copies of $C_{2k}$ covering all edges in $G'-G[U_1]$ such that
$$\Delta\big(\bigcup \cF[U_1]\big)\leq \mu^3|U_1|\leq 5\mu^2|U_2\cap A|.$$
Let $G'':=G[U_1]-\bigcup \cF$. Then $G''$ is a $2$-divisible bipartite graph with $\dbip(G'')\geq 1/2+3\mu$. For each $X\in \{A,B\}$, $|U_2\cap X|=\lfloor \mu |U_1\cap X|\rfloor$ and, for any $x\in V(G'')\setminus X$,
$$d_{G''}(x,U_2\cap X)\geq (1/2+4\mu)|U_2\cap X|-\Delta\big(\bigcup \cF[U_1]\big)\geq (1/2+7\mu/2)|U_2\cap X|.$$
Since $G''[U_2]=G[U_2]$, $U_2 \supseteq \dots \supseteq U_\ell$ is a $(1/2+4\mu,\mu,m)$-vortex respecting $(U_2\cap A, U_2\cap B)$ in $G''[U_2]$. Hence, by induction, there exists a subgraph $H_\ell$ of $G[U_\ell]$ such that $G''-H_\ell$ has a  $C_{2k}$-decomposition $\cF'$. Together $\cF\cup \cF'$ is a $C_{2k}$-decomposition of $G-H_\ell$.
\end{proofof}

\section{Decompositions of expanders}\label{sec:expanderdecomp}
The purpose of this section is to prove Theorem~\ref{thm:nuexpander}. This result finds a $C_{2k}$-decomposition of any $C_{2k}$-divisible $\nu$-expander $G$ when $k\geq 4$.%
\COMMENT{cannot find the absorber for $C_6$.}
The significance of $G$ being a $\nu$-expander (defined in Section~\ref{sec:longercycle}) is that there are many internally disjoint paths between any pair of vertices in $G$. We can use these paths to construct copies of $C_{2k}$ and to find absorbers and this allows us to use the arguments of \cite{stef} with only slight modification. We will make use of the fact that $\nu$-expansion is a robust property in the sense that it is not destroyed when we remove a sparse subgraph.

\subsection{Finding paths}

The next result can be used to find many internally disjoint paths with predetermined endpoints without using any vertex too often.%
\COMMENT{if we ask for minimum degree $n/2$, we can have paths of length three here}

\begin{prop}\label{prop:pairing}
Let $k\in \N$, $k\geq 4$ and $1/n\ll \gamma\ll \nu, 1/k$. Let $G$ be a $\nu$-expander on $n$ vertices and let $P=\{(x_1, y_1), \dots, (x_m, y_m)\}$ be a collection of $m\leq \gamma n^2$ pairs of distinct vertices of $G$. Suppose that each vertex appears in at most $\gamma n$ pairs in $P$. Then $G$ contains a collection of edge-disjoint paths $\cP=\{P^1, \dots, P^m\}$ such that, for each $1\leq i\leq m$, $P^i$ is a path of length $k$ from $x_i$ to $y_i$. Furthermore, $\Delta(\bigcup \cP)\leq \gamma^{1/3} n$.
\end{prop}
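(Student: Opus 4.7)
The plan is to construct the paths $P^1, \ldots, P^m$ greedily, processing the pairs in arbitrary order and using the expansion of $G$ to find each path while avoiding edges of previously constructed paths and vertices that have been overused as interior vertices.

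After step $j$, write $G_j := \bigcup_{i \le j} P^i$ and call a vertex $v$ \emph{overloaded} (at step $j$) if $d_{G_j}(v) > \gamma^{1/2} n$; let $X_j$ denote the set of overloaded vertices. A double-counting argument gives
\[
|X_j| \cdot \gamma^{1/2} n \le 2 e(G_j) \le 2 k m \le 2 k \gamma n^2,
\]
so $|X_j| \le 2k\gamma^{1/2} n \ll \nu n$. I will insist that every interior vertex of $P^{j+1}$ lie outside $X_j$; its endpoints $x_{j+1}, y_{j+1}$ are allowed to be overloaded. The required degree bound is then automatic: any interior use of $v$ happens when $d_{G_j}(v) \le \gamma^{1/2} n$ and raises $d(v)$ by $2$, whereas subsequent contributions come only from $v$ being an endpoint, which occurs in at most $\gamma n$ pairs. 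Hence
\[
\Delta\Bigl(\bigcup \cP\Bigr) \le \gamma^{1/2} n + 2 + \gamma n \le \gamma^{1/3} n.
\]

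The main task is to find each $P^{j+1}$. Set $G' := (G - G_j) \setminus (X_j \setminus \{x_{j+1}, y_{j+1}\})$. Since $\Delta(G_j) \le \gamma^{1/3} n$ (by the bound just established, applied inductively to the partial process up to step $j$), each $v \in V(G')$ has at most $|X_j| + \gamma^{1/3} n \le \nu n / 10$ of its $G$-neighbours removed in passing to $G'$. Consequently, for every $u \in R_{\nu, G}(N_G(v)) \cap V(G')$,
\[
|N_{G'}(u) \cap N_{G'}(v)| \ge |N_G(u) \cap N_G(v)| - 2 \cdot \nu n / 10 \ge \nu n / 2,
\]
so $u \in R_{\nu/2, G'}(N_{G'}(v))$. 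This gives
\[
|R_{\nu/2, G'}(N_{G'}(v))| \ge |R_{\nu, G}(N_G(v))| - |X_j| \ge |V(G')|/2 + \nu |V(G')|/4,
\]
so $G'$ is itself a $(\nu/4)$-expander on $(1 - o(1))n$ vertices.

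It remains to find a length-$k$ path in $G'$ from $x_{j+1}$ to $y_{j+1}$; this is where the hypothesis $k \ge 4$ enters. For $k = 4$, take any $w \in R_{\nu/2, G'}(N_{G'}(x_{j+1})) \cap R_{\nu/2, G'}(N_{G'}(y_{j+1}))$ (whose size is at least $\nu |V(G')| / 2$); since $w$ has at least $\nu |V(G')| / 4$ common neighbours with each of $x_{j+1}$ and $y_{j+1}$, one picks distinct $u \in N_{G'}(x_{j+1}) \cap N_{G'}(w)$ and $v \in N_{G'}(y_{j+1}) \cap N_{G'}(w)$ to form the path $x_{j+1}-u-w-v-y_{j+1}$. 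For general $k \ge 4$, a standard iteration of the robust-neighbourhood property produces many length-$k$ paths between any two prescribed vertices of a $(\nu/4)$-expander. The main obstacle throughout is the technical verification that removing $X_j$ and the edges of $G_j$ erodes the expansion constant only by a bounded factor, which is exactly what the displayed estimates above are designed to handle.
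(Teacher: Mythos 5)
Your proposal is correct and follows essentially the same route as the paper: build the paths greedily, exclude a small set of overloaded vertices from the interiors, check that deleting those vertices and the previously used edges only degrades the expansion constant by a bounded factor, and close each path with a length-four connection through the intersection of two robust neighbourhoods. The only point where you are vaguer than the paper is the case $k>4$: the paper simply embeds a path of length $k-4$ greedily from $x_j$ (using $\delta(G_j)\geq \nu n/2$) and then makes the length-four connection from its endpoint to $y_j$, which is exactly the ``standard iteration'' you allude to.
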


\begin{proof}
Let $1\leq j\leq m$ and suppose we have already found paths $P^1,\dots, P^{j-1}$ such that each vertex in $V(G)$ appears as an internal vertex in at most $2\sqrt \gamma n$ of the paths. Let $B$ be the set of all vertices which appear as an internal vertex in at least $\sqrt\gamma n$ paths in $P^1, \dots, P^{j-1}$. Note that 
$$|B|\leq m(k-1)/(\sqrt\gamma n)\leq \nu^2n.$$
Let $G_j:=G-\bigcup_{i=1}^{j-1}P^i$. Note that $\Delta(\bigcup_{i=1}^{j-1}P^i)\leq 4\sqrt\gamma n+\gamma n$ so $G_j$ is a $\nu/2$-expander (which implies $\delta(G_j)\geq \nu n/2$).
We find a path $P^j$ between $x_j$ and $y_j$ in $G_j$ whose interior vertices avoid $B$ as follows. Since $\nu n/2\geq |B|+k$, we can embed a path of length $k-4$ starting at $x_j$ greedily. Let $x_j'$ denote its endpoint. In order to find a path of length four between $x_j'$ and $y_j$ it suffices to note that
$$|R_{\nu/2, G_j}(N_{G_j}(x_j'))\cap R_{\nu/2, G_j}(N_{G_j}(y_j))|\geq \nu n\geq |B|+k.$$
Continuing in this way, we obtain edge-disjoint paths $P^1, \dots, P^m$ of length $k$ such that no vertex is used as an internal vertex more than $2\sqrt\gamma n$ times. Thus $\Delta(\bigcup \cP)\leq 4\sqrt\gamma n+\gamma n\leq \gamma^{1/3} n$.
\end{proof}

\subsection{Expander vortices}
We now introduce a further variant of the vortex, this time for expanders, where we replace the minimum degree condition with an expansion property instead. Let $G$ be a graph on $n$ vertices. A \emph{$(\nu, \mu,m)$-expander vortex} in $G$ is a sequence $U_0 \supseteq U_1 \supseteq \dots \supseteq U_\ell$ such that
\begin{itemize}
\item $U_0=V(G)$;
\item $|U_i|=\lfloor \mu|U_{i-1}| \rfloor$, for all $1\leq i\leq \ell$, and $|U_\ell|=m$;
\item $N_G(x, U_i)$ is $\nu$-expanding in $G[U_i]$, for all $1\leq i\leq \ell$ and all $x\in U_{i-1}$.
\end{itemize}

\begin{prop} \label{prop:randomsubset}
Let $0\leq \nu\leq 1$ and $1/n\ll \mu<1$. Suppose that $G$ is a $\nu$-expander on $n$ vertices. Then there exists $U\subseteq V(G)$ of size $\lfloor \mu n \rfloor$ such that, for every $x\in V(G)$, $N_G(x,U)$ is $(\nu-n^{-1/3})$-expanding in $G[U]$.
\end{prop}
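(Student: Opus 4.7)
The plan is to choose $U\subseteq V(G)$ uniformly at random subject to $|U|=\lfloor\mu n\rfloor$ and verify the required property via Lemma~\ref{lem:chernoff} together with a union bound. For each $x\in V(G)$, write $R_x:=R_{\nu,G}(N_G(x))$, so $|R_x|\geq n/2+\nu n$ by the assumption that $G$ is a $\nu$-expander. Unpacking the conclusion, it suffices to show that, with positive probability, for every $x\in V(G)$,
$$\bigl|\{v\in U: |N_G(v)\cap N_G(x)\cap U|\geq (\nu-n^{-1/3})|U|\}\bigr|\geq (1/2+\nu-n^{-1/3})|U|.$$

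I would split this into the following two events and show that both hold with high probability: (a) for every $x\in V(G)$, $|R_x\cap U|\geq (1/2+\nu-n^{-1/3})|U|$; and (b) for every $x\in V(G)$ and every $v\in R_x\cap U$, $|N_G(v)\cap N_G(x)\cap U|\geq (\nu-n^{-1/3})|U|$. Whenever (b) holds for a pair $(x,v)$ with $v\in R_x\cap U$, the vertex $v$ contributes to the set on the left-hand side above, so combining (a) and (b) delivers the desired bound.

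For (a), $|R_x\cap U|$ has hypergeometric distribution with parameters $(n,\lfloor\mu n\rfloor,|R_x|)$ and expectation at least $(1/2+\nu)|U|-1$; applying Lemma~\ref{lem:chernoff} with $t$ of order $\mu n^{2/3}$ bounds the failure probability by $2\exp(-\Omega(\mu^2 n^{1/3}))$. For (b), fix $x\in V(G)$ and $v\in R_x$ with $v\neq x$, and condition on $v\in U$. Since $v\notin N_G(v)$, we have $N_G(v)\cap N_G(x)\subseteq V(G)\setminus\{v\}$ and $|N_G(v)\cap N_G(x)|\geq \nu n$, so $|N_G(v)\cap N_G(x)\cap U|=|N_G(v)\cap N_G(x)\cap (U\setminus\{v\})|$ is conditionally hypergeometric with expectation at least $\nu|U|-1$; the same choice of $t$ yields the same exponential bound. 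A union bound over at most $n$ choices of $x$ in (a) and at most $n^2$ pairs $(x,v)$ in (b), together with $1/n\ll\mu$, keeps the total failure probability $o(1)$, so a suitable $U$ exists. The only delicate point is choosing $t=\Theta(\mu n^{2/3})$ so as to absorb the $O(1)$ error from $\lfloor\mu n\rfloor/n$ versus $\mu$ in the expectations while keeping $t^2/|U|\to\infty$; no serious obstacle is expected.
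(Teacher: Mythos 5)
Your proposal is correct and follows essentially the same route as the paper: a uniformly random $U$ of size $\lfloor\mu n\rfloor$, the hypergeometric form of Lemma~\ref{lem:chernoff} applied once to $|R_{\nu,G}(N_G(x))\cap U|$ and once to the codegrees $|N_G(v)\cap N_G(x)\cap U|$ for $v\in R_{\nu,G}(N_G(x))$, and a union bound over the at most $n^2$ relevant pairs. The only (harmless) deviation is that you condition on $v\in U$ for the codegree estimate, whereas the paper simply bounds the unconditional probability for every $y\in R_{\nu,G}(N_G(x))$ regardless of whether $y$ lands in $U$.
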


\begin{proof}
Let $U$ be a random subset of $V(G)$ of size $\lfloor \mu n \rfloor$. Fix $x\in V(G)$. Lemma~\ref{lem:chernoff} gives
\begin{align*}
\pr(|R_{\nu,G}(N_G(x))\cap U|<(1/2+\nu-n^{-1/3})|U|) &\le 2 e^{-2 n^{-2/3}|U|^2/n}\leq 2e^{-\mu^2n^{1/3}}\leq 1/n^3.
\end{align*}
Consider any $y\in R_{\nu,G}(N_G(x))$. Again by Lemma~\ref{lem:chernoff},
$$\pr(d_G(y,N_G(x,U))<(\nu-n^{-1/3}) |U|) \le 2 e^{-2n^{-2/3}|U|^2/n}\leq 2e^{-\mu^2n^{1/3}}\leq 1/n^3.$$%
\COMMENT{$=(N_G(y)\cap N_G(x))\cap U$}

By summing over all choices of $x$ and $y$, we see that with probability at least $1-2/n$ the set $U$ chosen in this way satisfies:
\begin{enumerate}
\item $|R_{\nu,G}(N_G(x))\cap U|\geq (1/2+\nu-n^{-1/3})|U|$, for all $x\in V(G)$ and\label{item:rand2}
\item $d_G(y,N_G(x,U))\ge (\nu-n^{-1/3})|U|$, for all $x\in V(G)$ and all $y\in R_{\nu,G}(N_G(x))$.\label{item:rand3}
\end{enumerate}
For any $x\in V(G)$, we have
$$|R_{\nu-n^{-1/3},G[U]}(N_G(x,U))|\stackrel{\eqref{item:rand3}}{\ge}|R_{\nu,G}(N_G(x))\cap U| \stackrel{\eqref{item:rand2}}{\geq} (1/2+\nu-n^{-1/3})|U|$$
so $U$ is the required set.
\end{proof}

We use the following result to find an expander vortex in $G$.

\begin{lemma} \label{lem:getvortex}
Let $0\leq \nu \leq 1$ and $1/m'\ll \mu<1$. Suppose that $G$ is a $\nu$-expander on $n \ge m'$ vertices. Then $G$ has a $(\nu-\mu, \mu, m)$-expander vortex for some $\lfloor \mu m' \rfloor \le m \le m'$.
\end{lemma}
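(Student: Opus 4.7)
The plan is to construct the sequence $U_0 \supseteq U_1 \supseteq \dots \supseteq U_\ell$ greedily by repeatedly applying Proposition~\ref{prop:randomsubset} to the induced subgraph at each stage, tracking how much expansion is lost at each step.

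Set $U_0:=V(G)$ and $\nu_0:=\nu$, and suppose inductively that we have produced $U_0\supseteq\dots\supseteq U_i$ with $|U_i|>m'$ and such that $G[U_i]$ is a $\nu_i$-expander, where $\nu_i$ will be specified below. Apply Proposition~\ref{prop:randomsubset} to $G[U_i]$ (which has $|U_i|\ge m'$ vertices and is a $\nu_i$-expander) with the same constant $\mu$ to obtain $U_{i+1}\subseteq U_i$ with $|U_{i+1}|=\lfloor\mu|U_i|\rfloor$ such that, for every $x\in U_i$, $N_G(x,U_{i+1})$ is $\nu_{i+1}$-expanding in $G[U_{i+1}]$, where $\nu_{i+1}:=\nu_i-|U_i|^{-1/3}$. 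In particular, specialising to $x\in U_{i+1}$ shows that $G[U_{i+1}]$ is itself a $\nu_{i+1}$-expander, so the induction continues. Terminate at the smallest $\ell$ with $|U_\ell|\le m'$; since $|U_{\ell-1}|>m'$ we have $|U_\ell|=\lfloor\mu|U_{\ell-1}|\rfloor\ge\lfloor\mu m'\rfloor$, so setting $m:=|U_\ell|$ gives $\lfloor\mu m'\rfloor\le m\le m'$ as required.

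It remains to verify that $\nu_i\ge\nu-\mu$ for every $i\le\ell$, which will ensure that the sequence is indeed a $(\nu-\mu,\mu,m)$-expander vortex (noting that the defining condition is precisely the conclusion of Proposition~\ref{prop:randomsubset} applied at step $i$). Since $|U_j|$ is decreasing in $j$ and $|U_{\ell-1}|>m'$, the relation $|U_{j+1}|\le\mu|U_j|$ gives $|U_j|\ge m'\mu^{-(\ell-1-j)}$ for $0\le j\le\ell-1$, so
\[
\nu-\nu_\ell \;=\; \sum_{j=0}^{\ell-1}|U_j|^{-1/3} \;\le\; (m')^{-1/3}\sum_{j=0}^{\ell-1}\mu^{(\ell-1-j)/3} \;\le\; \frac{(m')^{-1/3}}{1-\mu^{1/3}} \;<\; \mu,
\]
where the last inequality uses $1/m'\ll\mu$. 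Since $\nu_i\ge\nu_\ell$ for all $i\le\ell$, the required bound $\nu_i\ge\nu-\mu$ holds throughout.

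The only nontrivial point is the expansion accounting above; everything else is a mechanical iteration of Proposition~\ref{prop:randomsubset}. No single step is a real obstacle provided the hierarchy $1/m'\ll\mu$ is exploited to swallow the total loss $\sum|U_j|^{-1/3}$, which is dominated by its last (largest) term of order $(m')^{-1/3}$.
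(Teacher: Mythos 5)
Your proposal is correct and takes essentially the same route as the paper's proof: iterate Proposition~\ref{prop:randomsubset}, losing $|U_i|^{-1/3}$ of expansion at each step, and bound the cumulative loss by a geometric series dominated by its largest term of order $(m')^{-1/3}\ll\mu$. The only cosmetic difference is that you track the expansion constants $\nu_i$ of the induced subgraphs $G[U_i]$ directly, whereas the paper records the accumulated loss via explicit quantities $a_i=n^{-1/3}\sum_{j\le i}\mu^{-(j-1)/3}$.
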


\noindent This result follows from repeated applications of Proposition~\ref{prop:randomsubset} (see the proof of Lemma~4.3 in~\cite{stef} for more details).%
\COMMENT{See appendix~\ref{append:expander}}

\subsection{Covering most of the edges}

In this section we decompose almost all of the graph $G$ into cycles except for a very restricted remainder using the following result. This is exactly the technique we used in Section~\ref{sec:bipartite}, so again we omit some details.

\begin{lemma} \label{lem:nearoptimal}
Let $k\in \N$, $k\geq 3$ and $1/m \ll \nu,1/k$. Let $G$ be a $2$-divisible $4\nu$-expander and let $U_0 \supseteq U_1 \supseteq \dots \supseteq U_\ell$ be a $(5\nu,\nu,m)$-expander vortex in $G$. Then there exists $H_\ell\subseteq G[U_\ell]$ such that $G-H_\ell$ is $C_{2k}$-decomposable.
\end{lemma}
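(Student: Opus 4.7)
The plan is to mirror the proof of Lemma~\ref{lem:nearoptimal:bip} step by step, replacing bipartite minimum-degree inputs by $\nu$-expansion inputs and replacing greedy common-neighbourhood path-finding by Proposition~\ref{prop:pairing}. The induction is on $\ell$; the case $\ell=0$ is immediate by taking $H_\ell:=G$. The engine for the inductive step will be an expander analogue of Lemma~\ref{lem:coverdown:bip}:
\emph{Let $1/n\ll\nu\ll 1/k$, let $G$ be a $4\nu$-expander on $n$ vertices, and let $U\subseteq V(G)$ have size $\lfloor\nu n\rfloor$ with $N_G(x,U)$ being $5\nu$-expanding in $G[U]$ for every $x\in V(G)$. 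If $2\mid d_G(x)$ for all $x\in V(G)\setminus U$, then there is an edge-disjoint collection $\cF$ of copies of $C_{2k}$ in $G$ such that every edge of $G-G[U]$ is covered and $\Delta(\bigcup\cF[U])\leq \nu^3|U|$.}
Applied to $(G,U_1)$ this produces $\cF$, and then $G'':=G[U_1]-\bigcup\cF$ is $2$-divisible with $U_1\supseteq\dots\supseteq U_\ell$ still a $(5\nu,\nu,m)$-expander vortex in $G''$, because at most $\nu^3|U_1|$ edges have been removed at any vertex and so the robust neighbourhoods shrink by a negligible amount. The induction hypothesis applied to $G''$ then delivers $H_\ell\subseteq G''[U_\ell]=G[U_\ell]$ whose removal makes $G''$, and hence $G$, $C_{2k}$-decomposable.

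To prove the coverdown lemma I would randomly partition $U=V_1\cupdot\cdots\cupdot V_M$ with $M=\binom{m_0+1}{2}$ and $m_0=\lceil\xi^{-1}\rceil$ for some $\xi$ with $1/n\ll\xi\ll\nu$, using Lemma~\ref{lem:chernoff} to ensure that each $V_i$ inherits, for every $x\in V(G)$, that $N_G(x,V_i)$ is $4\nu$-expanding in $G[V_i]$; simultaneously split $V(G)\setminus U$ equitably into $W_1,\dots,W_{m_0}$ and let $G_W^1,\dots,G_W^M$ enumerate the graphs $G[W_i]$ and $G[W_i,W_j]$. For each $i$, reserve a sparse connector $R_i\subseteq G[V_i,V(G_W^i)]$ so that $\Delta(R)\leq \xi n$, where $R:=\bigcup_i R_i$. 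Next, prove and apply an expander analogue of Lemma~\ref{lem:boundmaxdegree:bip} to $G':=G-(G[U]\cup R)$ to obtain an approximate $C_{2k}$-decomposition $\cF_1$ whose remainder $H$ has $\Delta(H)\leq \gamma n$; this analogue follows the bipartite template, partitioning $V(G')$ along a $K_t$-decomposition of $K_s$ (Proposition~\ref{prop:clique dec}) and combining Erd\H{o}s--Stone with Proposition~\ref{prop:pairing} to mop up the high-degree vertices in an expander analogue of Lemma~\ref{lem:careofbad:bip}. Then, for each $i$, apply an expander version of Proposition~\ref{prop:cover-sparse-graph:bip}, whose proof is a direct invocation of Proposition~\ref{prop:pairing}: for each edge of $H_i:=H\cap G_W^i$ route a $C_{2k}$ whose $2k-1$ internal edges live in $G[V_i]\cup R_i$. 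Finally, the only uncovered edges lie in $G[U,V(G)\setminus U]\cup G[U]$, and $2\mid d_G(w)$ for each $w\in V(G)\setminus U$ still implies even residual degree at $w$; pair up $w$'s uncovered neighbours in $U$ arbitrarily and join each pair by a length-$(2k-2)$ path inside $G[U]$ using Proposition~\ref{prop:pairing}. Summing all degree costs on $U$ gives $\Delta(\bigcup\cF[U])\leq \nu^3|U|$ as required.

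The main obstacle is parameter bookkeeping around the expansion property. Unlike $\dbip$, which interacts linearly with edge deletion, we need the \emph{robustness} of $\nu$-expansion: removing a subgraph of maximum degree $o(\nu n)$ from a $c\nu$-expander leaves a $(c\nu-o(1))$-expander, and, analogously, if $N_G(x,U)$ was $5\nu$-expanding in $G[U]$ and we delete at most $\nu^3|U|$ edges at every vertex, then the induced condition $N_{G'}(x,U)$ is still $(5\nu-o(1))$-expanding in $G'[U]$. This preservation must be checked at every stage (after the random partition, after reserving $R$, after the approximate decomposition, and after each path-routing step) and is the place where the hierarchy $4\nu>5\nu/\text{(...)}$ is consumed. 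A minor technicality is that Proposition~\ref{prop:pairing} is stated for $k\geq 4$; for $k=3$ one substitutes the direct observation that any $4\nu$-expander contains many internally disjoint paths of length $3$ between any prescribed pair of vertices, found greedily using the robust neighbourhood, which is more than enough for the coverdown and path-routing steps above.
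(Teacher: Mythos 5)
Your overall architecture matches the paper's: induction on the length of the vortex, driven by a ``coverdown'' lemma (the expander analogue of Lemma~\ref{lem:coverdown:bip}, which is Lemma~\ref{lem:coverdown} in the paper) whose proof uses a random partition of $U$, connectors into $W$, an expander analogue of Lemma~\ref{lem:boundmaxdegree:bip}, an expander analogue of Proposition~\ref{prop:cover-sparse-graph:bip}, and finally path-routing via Proposition~\ref{prop:pairing}. However, your inductive step has a genuine gap. You apply the coverdown lemma to the pair $(G,U_1)$ and then claim that $U_1\supseteq\dots\supseteq U_\ell$ remains a $(5\nu,\nu,m)$-expander vortex in $G'':=G[U_1]-\bigcup\cF$ ``because at most $\nu^3|U_1|$ edges have been removed at any vertex and so the robust neighbourhoods shrink by a negligible amount.'' This is false for the deeper levels of the vortex: the covering cycles are only constrained by $\Delta(\bigcup\cF[U_1])\leq\nu^3|U_1|$, so they may use edges concentrated inside $G[U_i]$ for $i\geq 2$. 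Since $|U_i|\approx\nu^{i}n$ shrinks geometrically down to the constant $m=|U_\ell|$, the quantity $\nu^3|U_1|=\Theta(n)$ is \emph{not} negligible compared with $\nu|U_i|$ once $i\geq 3$ or so; indeed for large $i$ it can exceed $|U_i|$ entirely, so the condition that $N_{G''}(x,U_i)$ be $5\nu$-expanding in $G''[U_i]$ can be destroyed outright.

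The paper avoids this by first setting $G':=G-G[U_2]$ and applying the coverdown lemma to $G'$ and $U_1$: the covering cycles then live in $G'$ and hence avoid every edge of $G[U_2]$, so $G''[U_2]=G[U_2]$ and the inner vortex $U_2\supseteq\dots\supseteq U_\ell$ is preserved \emph{exactly}. Only the top level is perturbed, and there the loss $\Delta(\bigcup\cF[U_1])\leq\nu^2|U_1|/4$ is comparable to $\nu|U_2|$, which is absorbed by a small cascade of expansion constants ($5\nu\to 9\nu/2\to 4\nu\to 3\nu\to\nu$); this in turn forces the paper to prove a slightly strengthened induction statement in which the hypothesis on $U_1$ is decoupled from the vortex conditions on $U_2,\dots,U_\ell$. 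You should restructure your inductive step accordingly. (A minor further remark: in the coverdown proof the paths needed have length $2k-2\geq 4$ and $2k-1\geq 5$, so Proposition~\ref{prop:pairing} applies with those lengths playing the role of its parameter $k$ even when $k=3$; your separate treatment of length-$3$ paths for $k=3$ is not where the difficulty lies.)
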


We require some preliminary results. The first finds an approximate $C_{2k}$-decomposition of $G$ whilst maintaining control over the number of edges incident at all vertices in a given set $X$.

\begin{lemma} \label{lem:careofbad}
Let $k\in \N$, $k\geq 3$ and $1/n \ll \eta \ll \nu,1/k$. Suppose that $G$ is a $\nu$-expander on $n$ vertices and that $X\subseteq V(G)$ of size at most $\eta^{1/2} n$. Then there exists $H\subseteq G$ such that $G-H$ is $C_{2k}$-decomposable, $Y:=\{x\in V(G):d_H(x)>\eta n\}$ has size at most $\eta n$ and $X\cap Y=\emptyset$.
\end{lemma}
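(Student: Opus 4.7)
The plan is to mirror the proof of the bipartite analogue Lemma~\ref{lem:careofbad:bip}: handle all edges touching $X$ first so that every vertex of $X$ has almost no residual degree, then clean up the remaining $\nu$-expander with a greedy Erd\H{o}s--Stone approximate decomposition. Throughout, the role played by the high minimum degree in the bipartite proof will be played here by Proposition~\ref{prop:pairing} (or its straightforward analogue for paths of length $2k-2$ and $2k-1$, which is proved by exactly the same embedding argument since $2k-2\geq 4$ for $k\geq 3$). I will keep an auxiliary constant $\eta\ll\gamma\ll\nu,1/k$ in mind when invoking Proposition~\ref{prop:pairing}.

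First I would cover the edges inside $G[X]$. For each $xy\in E_G(X)$, the goal is to find an $xy$-path $P_{xy}$ of length $2k-1$ whose internal vertices avoid $X$, such that the paths are pairwise edge-disjoint. The list of pairs $(x,y)$ has size $e_G(X)\leq |X|^2\leq \eta n^2$, and each vertex of $X$ appears in at most $|X|\leq \eta^{1/2}n$ of them, so Proposition~\ref{prop:pairing} (applied with $\gamma=\eta$ and adapted to paths of length $2k-1\geq 5$) supplies such a family with $\Delta\bigl(\bigcup P_{xy}\bigr)\leq \eta^{1/3}n$. Adjoining $xy$ to $P_{xy}$ gives a $C_{2k}$; call the resulting collection $\cF_1$ and set $G_1:=G-\bigcup \cF_1$. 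Since $\Delta(\bigcup\cF_1)\leq 2\eta^{1/3}n\leq \nu n/2$, the graph $G_1$ is still a $(\nu/2)$-expander.

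Next I would remove almost all remaining edges at $X$. For each $x\in X$ in turn, pair up the vertices of $N_{G_1}(x)\setminus X$ arbitrarily, leaving at most one vertex unmatched (we have no parity guarantee on $d_{G_1}(x)$). This produces a list of at most $|X|n/2\leq \eta^{1/2}n^2/2$ pairs $(y,z)$, each vertex appearing in at most $2|X|\leq 2\eta^{1/2}n$ of them. Another application of Proposition~\ref{prop:pairing} (this time for paths of length $2k-2\geq 4$, with internal vertices again avoiding $X$) yields edge-disjoint $yz$-paths of length $2k-2$ with maximum degree $\leq \eta^{1/3}n$; each such path together with $xy$ and $xz$ is a $C_{2k}$. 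Call the family $\cF_2$ and set $H':=G_1-\bigcup\cF_2$. By construction $d_{H'}(x)\leq 1$ for every $x\in X$.

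Finally I would finish by a standard Erd\H{o}s--Stone argument on $H'$. Since $H'$ has $\Omega(n^2)$ edges and $C_{2k}$ is bipartite, I can greedily remove copies of $C_{2k}$ until at most $\eta^3 n^2$ edges remain; write $\cF_3$ for the resulting family and $H:=H'-\bigcup \cF_3$. Then $\cF_1\cup \cF_2\cup \cF_3$ is a $C_{2k}$-decomposition of $G-H$. Each vertex $v\in Y$ satisfies $d_H(v)>\eta n$, hence $|Y|\leq 2e(H)/(\eta n)\leq 2\eta^2 n\leq \eta n$. Since $d_H(x)\leq d_{H'}(x)\leq 1<\eta n$ for every $x\in X$, we get $X\cap Y=\emptyset$, as required. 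The only genuine obstacle is the path-finding in Steps~1 and~2; this is exactly the content of Proposition~\ref{prop:pairing}, so the main verification is that the number of pairs and their multiplicities at each vertex are small enough ($\leq \eta n^2$ and $\leq \eta^{1/2}n$ respectively) to invoke it with room to spare.
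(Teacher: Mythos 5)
Your proposal is correct and follows essentially the same three-step route as the paper: cover $E_G(X)$ by cycles via paths of length $2k-1$, reduce each vertex of $X$ to residual degree at most one via paths of length $2k-2$ between paired neighbours, then apply Erd\H{o}s--Stone to the remainder, with Proposition~\ref{prop:pairing} doing the path-finding in the first two steps. The only slip is the parameter in the first invocation: since each vertex of $X$ lies in up to $|X|\leq \eta^{1/2}n$ pairs, you must take $\gamma=\eta^{1/2}$ (as the paper does), not $\gamma=\eta$; this is harmless since $\eta^{1/6}n$ still bounds the resulting maximum degree adequately.
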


\begin{proof}
We begin by finding edge-disjoint copies of $C_{2k}$ which cover all the edges in $G[X]$. To this end, let $P_X:=\{(x,y): xy\in E_G(X)\}$. Since $|X|\leq \eta^{1/2}n$, $G-G[X]$ is a $3\nu/4$-expander and we may apply Proposition~\ref{prop:pairing} (with $P_X$, $\eta^{1/2}$, $G-G[X]$, $2k-1$ and $3\nu/4$ playing the roles of $P$, $\gamma$, $G$, $k$ and $\nu$) to find a collection $\cP_X$ of edge-disjoint paths of length $2k-1$ between the endpoints of each edge in $E_G(X)$ such that $\Delta(\bigcup \cP_X)\leq \eta^{1/6}n$. Thus we obtain a collection $\cF_X$ of edge-disjoint copies of $C_{2k}$ which cover all of the edges in $G[X]$ such that $\Delta(\bigcup \cF_X)\leq 2\eta^{1/6}n$. Let $G':=G\setminus \bigcup \cF_X$.

Our next step is to cover all but at most one of the remaining edges incident at each vertex in $X$. For each $x\in X$, pair up the vertices in $N_{G'}(x)$, leaving at most one vertex. Let $P_X'$ denote the list of pairs for all $x\in X$. Note that $G'\setminus X$ is a $\nu/2$-expander. Then, as previously, apply Proposition~\ref{prop:pairing} (with $P_X'$, $\eta^{1/2}$, $G'\setminus X$, $2k-2$ and $\nu/2$ playing the roles of $P$, $\gamma$, $G$, $k$ and $\nu$) to find a collection $\cP_X'$ of edge-disjoint paths of length $2k-2$ in $G'\setminus X$ between each pair in $P_X'$. These paths combine with edges incident at $X$ to form a collection $\cF_X'$ of edge-disjoint copies of $C_{2k}$ which, together with $\cF_X$, cover all but at most one edge incident at each $x\in X$.

Finally, let $H':=G-\bigcup (\cF_X\cup \cF_X')$. Use the Erd\H{o}s-Stone theorem to greedily find an $\eta^3$-approximate $C_{2k}$-decomposition of $H'$ which we will denote by $\cF$. Let $H:=H'-\bigcup \cF$ and note that $G-H$ has a $C_{2k}$-decomposition given by $\cF_X\cup \cF_X'\cup \cF$. If $Y:=\{x\in V(G):d_H(x)>\eta n\}$, then $|Y|\leq 2e(H)/(\eta n)\leq \eta n$. Since $d_H(x)\leq 1$ for all $x\in X$, $X\cap Y=\emptyset$.
\end{proof}

We use Lemma~\ref{lem:careofbad} to prove the following result which finds a $C_{2k}$-decomposition of $G$ so that every vertex has low degree in the remainder.

\begin{lemma} \label{lem:boundmaxdegree}
Let $k\in \N$, $k\geq 3$ and $1/n \ll \nu,1/k$.
Let $G$ be a $\nu$-expander on $n$ vertices. Then $G$ has an approximate $C_{2k}$-decomposition $\cF$ such that $\Delta(G-\bigcup \cF)\leq \nu n$.
\end{lemma}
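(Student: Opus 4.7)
The plan is to mirror the strategy of Lemma~\ref{lem:boundmaxdegree:bip}, with $\nu$-expansion taking the role of the bipartite minimum degree condition. Choose constants so that
$1/n \ll \eta \ll 1/s \ll 1/t \ll \nu, 1/k$
with $K_s$ having a $K_t$-decomposition (guaranteed by Proposition~\ref{prop:clique dec}). I would first take a random equitable partition $\cP = \{V_1, \ldots, V_s\}$ of $V(G)$. By repeated applications of Lemma~\ref{lem:chernoff} in the style of Proposition~\ref{prop:randomsubset}, with positive probability the following holds: $|V_j| = \lfloor n/s\rfloor$ or $\lceil n/s\rceil$ for each $j$, and for every $x, z \in V(G)$ with $d_G(z, N_G(x)) \geq \nu n$ and every $j$, one has $d_G(z, N_G(x, V_j)) \geq (\nu - n^{-1/4})|V_j|$, and the analogous concentration for $|R_{\nu, G}(N_G(x)) \cap V_j|$. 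Since $|V_j| \leq n/s + 1 \leq \nu n/2$, edges lying inside some $V_j$ contribute at most $\nu n/2$ to the degree of any vertex, so it suffices to produce an approximate $C_{2k}$-decomposition of $G[\cP]$ leaving each vertex with degree at most $\nu n/2$.

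Let $\{T_1, \ldots, T_\ell\}$ be a $K_t$-decomposition of $K_s$ and, for each $i$, set $G_i := \bigcup_{jk \in E(T_i)} G[V_j, V_k]$, so that the $G_i$ partition $E(G[\cP])$ and $n_i := t\lfloor n/s\rfloor \leq |V(G_i)| \leq t\lceil n/s\rceil$. The above random-partition properties imply that each $G_i$ is a $(\nu/3)$-expander: the induced graph on $\bigcup_{j\in V(T_i)} V_j$ inherits $(\nu - o(1))$-expansion from $G$, and deleting the at most $n/s = n_i/t$ edges per vertex lying inside the parts decreases robust degrees by only $n_i/t \leq \nu n_i/4$. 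Now iterate Lemma~\ref{lem:careofbad} across the $G_i$: let $X_1 := \emptyset$, and for each $i = 1, \ldots, \ell$ apply Lemma~\ref{lem:careofbad} to $G_i$ with $\eta$, $\nu/3$ and $X_i \cap V(G_i)$ playing the roles of $\eta$, $\nu$ and $X$, obtaining $H_i \subseteq G_i$ such that $G_i - H_i$ is $C_{2k}$-decomposable, $d_{H_i}(x) \leq \eta n_i$ for all $x \in X_i$, and $|Y_i| \leq \eta n_i$ for $Y_i := \{x \in V(G_i) : d_{H_i}(x) > \eta n_i\}$. Setting $X_{i+1} := X_i \cup Y_i$ gives $|X_i| \leq s^2 \eta n_i \leq \eta^{1/2} n_i$, so the hypothesis of Lemma~\ref{lem:careofbad} is maintained throughout.

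Finally, set $H := \bigcup_{i=1}^\ell H_i$ and let $\cF$ be the resulting collection of cycles (the union of the $C_{2k}$-decompositions of $G_i - H_i$). For each vertex $x$, the property $X \cap Y = \emptyset$ in Lemma~\ref{lem:careofbad} ensures that $d_{H_i}(x) \leq \eta n_i$ for every $i$ except at most the single iteration in which $x$ first enters some $Y_i$; hence
\[
d_H(x) \;\leq\; s^2 \eta n_i + n_i \;\leq\; (s^2\eta + 1)\tfrac{tn}{s} \;\leq\; \tfrac{\nu n}{2}.
\]
Together with the at most $\nu n/2$ edges inside the parts of $\cP$ at any given vertex, this yields $\Delta(G - \bigcup \cF) \leq \nu n$.

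The main obstacle I expect is the expansion-inheritance step: unlike the bipartite analogue, where $G_i$ automatically inherits a bipartite minimum degree from $G$, here one must check (via several simultaneous Chernoff bounds) that the random equipartition leaves each $G_i$ with genuine expansion, not just high minimum degree. Once this is in place, the iterative application of Lemma~\ref{lem:careofbad} proceeds identically to the bipartite case.
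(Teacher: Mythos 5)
Your proposal is correct and follows essentially the same route as the paper's proof: a random equipartition refined by a $K_t$-decomposition of $K_s$, verification (via Chernoff-type concentration of $d_G(y,N_G(x,V_j))$ and $|R_{\nu,G}(N_G(x))\cap V_j|$) that each $G_i$ is an expander, and then the iterative application of Lemma~\ref{lem:careofbad} with the accumulating exceptional sets $X_i$. The expansion-inheritance step you flag as the main obstacle is exactly the computation the paper carries out, and your final degree bound matches the paper's.
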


\begin{proof}
Choose $s, t\in \N$ and $\eta>0$ such that 
$$1/n\ll \eta \ll 1/s  \ll 1/t \ll \nu,1/k$$
and $K_s$ has a $K_t$-decomposition ($s$ and $t$ exist by Proposition~\ref{prop:clique dec}). 
Let $\cP=\{V_1,\dots,V_s\}$ be an equipartition of $V(G)$ satisfying the following for all $1\leq i\leq s$:
\begin{enumerate}[\rm(i)]
\item $d_G(y,N_G(x,V_i)) \ge (\nu-\eta)|V_i|$ for all $x\in V(G)$ and all $y\in R_{\nu,G}(N_G(x))$;\label{rand2*}
\item $|R_{\nu,G}(N_G(x))\cap V_i|\geq (1/2+\nu-\eta)|V_i|$ for all $x\in V(G)$.\label{rand3*}
\end{enumerate}
To see that such a partition exists, consider a random equipartition of $V(G)$ into $s$ parts and apply Lemma~\ref{lem:chernoff} to see that this partition satisfies \eqref{rand2*}--\eqref{rand3*} with probability at least $3/4$.%
\COMMENT{$d_G(y,N_G(x,V_i))$ has hypergeometric distrbution with parameters: $n$, $|V_i|$ and $N_G(x)\cap N_G(y)$. $|R_{\nu,G}(N_G(x))\cap V_i|$ has hypergeometric distribution with parameters: $n$, $|V_i|$ and $|R_{\nu,G}(N_G(x))|$. By Lemma~\ref{lem:chernoff}: $\pr(d_G(y,N_G(x,V_i))<(\nu-\eta)|V_i|), \pr(|R_{\nu,G}(N_G(x))\cap V_i|<(1/2+\nu-\eta)|V_i|)\leq 2e^{-\eta^2n/s}\leq 1/n^3$. Sum over all $x,y,i$, $\leq 2n^2s/n^3\leq 1/4$.}
It will suffice to show that $G[\cP]$ has an approximate $C_{2k}$-decomposition $\cF$ such that $\Delta(G[\cP]-\bigcup \cF)\leq \nu n/2$ (since $|V_i|\leq \nu n/2$ for all $1\leq i\leq s$).

Consider $\{T_1,\dots,T_\ell\}$, a $K_t$-decomposition of $K_s$, where $V(K_s)=\{1,\dots, s\}$. For each $1\leq i\leq \ell$, define $G_i:=\bigcup_{jk \in E(T_i)}G[V_j,V_k]$, so the $G_i$ decompose $G[\cP]$.
Consider any $x\in V(G_i)$ and any $y\in R_{\nu,G}(N_G(x))\cap V(G_i)$. We have
\begin{align}\label{eq:boundmaxdeg}
d_{G_i}(y, N_{G_i}(x))&=\sum_{\substack{V_j\subseteq V(G_i)\\x,y\not\in V_j}}d_G(y, N_G(x, V_j))\stackrel{\mathclap{\eqref{rand2*}}}{\geq} (t-2)(v-\eta)\lfloor n/s\rfloor\\
&\geq (\nu/2)t\lceil n/s\rceil\geq (\nu/2)|G_i|.\nonumber
\end{align}
So
\begin{align*}
|R_{\nu/2,G_i}(N_{G_i}(x))|&\stackrel{\mathclap{\eqref{eq:boundmaxdeg}}}{\geq} |R_{\nu,G}(N_G(x))\cap V(G_i)| \stackrel{\mathclap{\eqref{rand3*}}}{\geq} t(1/2+\nu-\eta)\lfloor n/s\rfloor \\
&\geq (1/2+\nu/2)t\lceil n/s\rceil \geq (1/2+\nu/2)|G_i|.
\end{align*}
Thus $G_i$ is a $\nu/2$-expander for each $1\leq i\leq \ell$.

Let $X_1:=\emptyset$. For each $1\leq i \leq \ell$ in turn, apply Lemma~\ref{lem:careofbad} (with $G_i$, $\nu/2$ and $X_i\cap V(G_i)$ playing the roles of $G$, $\nu$ and $X$) to find $H_i\subseteq G_i$ such that $G_i-H_i$ is $C_{2k}$-decomposable, $d_{H_i}(x) \le \eta |G_i|$ for all $x\in X_i$ and $|Y_i| \leq \eta |G_i|$, where $Y_i:=\{x\in V(G_i): d_{H_i}(x)> \eta |G_i|\}$.
Let $X_{i+1}:=X_i\cup Y_i$. Note that, for all $1\leq i\leq \ell$,
$|X_i| \le s^2 \eta t\lceil n/s\rceil \le \eta^{1/2} t\lfloor n/s\rfloor$, so we can indeed use Lemma~\ref{lem:careofbad}. Let $H:=\bigcup_{i=1}^{\ell}H_i$ and consider any $x\in V(G)$. We know that
$$d_H(x) \le \ell \eta t\lceil n/s\rceil + t\lceil n/s\rceil \le 2s\eta tn + 2tn/s \le \nu n/2,$$
since
$d_{H_i}(x)\leq\eta t\lceil n/s\rceil $ for all but at most one $1\leq i\leq \ell$.
\end{proof}

The following proposition, an analogue of Proposition~\ref{prop:cover-sparse-graph:bip}, takes a subset $R$ of $G$ and covers all the edges in a sparse subgraph $H$ which have no endpoint in this set $R$. It is proved by mimicking the proof of Proposition~5.10 in~\cite{stef} (see \cite{thesis} for more details).%
\COMMENT{see Appendix~\ref{append:expander}}

\begin{prop} \label{prop:cover-sparse-graph}
Let $k\in \N$, $k\geq 3$ and $1/n \ll \gamma \ll \mu, 1/k$. Let $G$ be a graph on $n$ vertices and let $V(G)=L\cupdot R$ such that $|R| \ge \mu n$ and $N_G(x, R)$ is $\mu$-expanding in $G[R]$ for all $x\in V(G)$.
Let $H$ be any subgraph of $G[L]$ such that $\Delta(H) \le \gamma n$. Then there exists a subgraph $A$ of $G$ such that $A[L]$ is empty, $A\cup H$ is $C_{2k}$-decomposable and $\Delta(A)\le \gamma^{1/3} |R|$.
\end{prop}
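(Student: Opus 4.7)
The plan is to cover each edge $e_j = x_j y_j$ of $H$ with a copy $F_j$ of $C_{2k}$ whose other $2k-1$ edges form a path with internal vertices in $R$. Enumerate $E(H) = \{e_1, \ldots, e_m\}$; since $\Delta(H) \leq \gamma n$, we have $m \leq \gamma n^2/2$. I would process the edges in order, constructing edge-disjoint copies $F_1, F_2, \ldots$ with $V(F_j) \cap L = \{x_j, y_j\}$. Setting $G_j := \bigcup_{i \leq j}(F_i - e_i)$ and $A := G_m$, this ensures $A[L] = \emptyset$ and that the $F_j$'s form a $C_{2k}$-decomposition of $A \cup H$.

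The chief quantitative constraint is the degree bound, and I would maintain the invariant $\Delta(G_{j-1}) \leq \sqrt{\gamma}\,|R| + 2$. At step $j$, let $X_j := \{v \in V(G) : d_{G_{j-1}}(v) > \sqrt{\gamma}\,|R|\}$; a standard edge count yields $|X_j|\sqrt{\gamma}\,|R| \leq 2e(G_{j-1}) \leq 4k\gamma n^2$, so $|X_j| \leq 4k\sqrt{\gamma}\,n/\mu \leq \gamma^{1/3}\mu n$, which is small compared to $|R|$. Moreover $X_j \cap L = \emptyset$, since a vertex $v \in L$ appears in some $F_i - e_i$ only if $v \in V(e_i)$, giving $d_{G_{j-1}}(v) \leq 2\Delta(H) \leq 2\gamma n < \sqrt{\gamma}\,|R|$. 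Accordingly I will search for $F_j$ within $G' := (G - G_{j-1})[(R \setminus X_j) \cup \{x_j, y_j\}]$; every interior vertex of $F_j$ then lies outside $X_j$, so $d_{G_j}(v) \leq d_{G_{j-1}}(v) + 2 \leq \sqrt{\gamma}\,|R| + 2$ and the invariant is preserved.

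The main obstacle is to show that $F_j$ really exists in $G'$. Since removing $G_{j-1}$ strips at most $\sqrt{\gamma}\,|R|$ incident edges from each vertex of $R \setminus X_j$ and $|X_j| \leq \mu|R|/3$, the hypothesis that $N_G(x, R)$ is $\mu$-expanding in $G[R]$ descends, for every $x \in V(G)$, to the statement that $N_{G'}(x, R \setminus X_j)$ is $(\mu/2)$-expanding in $G'[R \setminus X_j]$; in particular $G'[R \setminus X_j]$ is itself a $(\mu/2)$-expander. As in the proof of Proposition~\ref{prop:pairing}, I greedily build a path $P_j$ of length $2k-1$ from $x_j$ to $y_j$ with interior in $R \setminus X_j$: step from $x_j$ into $N_G(x_j, R) \setminus X_j$, walk $2k-4$ further steps inside $G'[R \setminus X_j]$ using the expander property, and then close up via a vertex of $N_G(y_j, R) \setminus X_j$ located using the expansion of $N_G(y_j, R)$. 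At each stage at least $\mu|R|/4$ valid choices remain, so $F_j := P_j \cup \{e_j\}$ exists. After all $m$ steps we obtain $d_A(v) \leq \sqrt{\gamma}\,|R| + 2 \leq \gamma^{1/3}|R|$ for $v \in R$ and $d_A(v) \leq d_H(v) \leq \gamma n \leq \gamma^{1/3}|R|$ for $v \in L$ (using $|R| \geq \mu n$ and $\gamma \ll \mu$), completing the argument. This mirrors the proof of Proposition~5.10 of~\cite{stef} almost verbatim, with the bipartite-style expansion hypothesis on $N_G(x, R)$ replacing the minimum-degree condition used there.
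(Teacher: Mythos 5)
Your argument is correct and follows the paper's own proof (Proposition~5.10 of~\cite{stef}, reproduced in the appendix) essentially verbatim: the same greedy construction of edge-disjoint cycles $F_j$ through $R$, the same exclusion set $X$ of high-degree vertices used both to keep the cycles edge-disjoint with bounded maximum degree and to preserve the $\mu/2$-expansion of $G'$, and the same final degree count. The only cosmetic adjustment needed is in closing each path: as in Proposition~\ref{prop:pairing}, the final connection to $y_j$ should be a segment of length four obtained by picking an intermediate vertex in $R_{\mu/2,G'}(N_{G'}(v))\cap R_{\mu/2,G'}(N_{G'}(y_j))$, rather than a single step from the last walk vertex into $N_{G'}(y_j,R)$ (which expansion alone does not guarantee); this merely shifts your step counts by a constant and does not affect the argument.
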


Lemma~\ref{lem:nearoptimal} will follow from the following result by induction. The proof of Lemma~\ref{lem:coverdown} very closely resembles that of Lemma~\ref{lem:coverdown:bip} (and uses Lemma~\ref{lem:boundmaxdegree}, Proposition~\ref{prop:cover-sparse-graph} and Proposition~\ref{prop:pairing}, in this order). We omit the details and refer the reader instead to \cite{thesis}.%
\COMMENT{see Appendix~\ref{append:expander}}

\begin{lemma}\label{lem:coverdown}
Let $k\in \N$, $k\geq 3$ and $1/n\ll \nu,1/k$. Let $G$ be a $3\nu$-expander on $n$ vertices and $U\subseteq V(G)$ with $|U|=\lfloor \nu n\rfloor$. Suppose that $N_G(x,U)$ is $\nu$-expanding in $G[U]$ for all $x\in V(G)$. Then, if $2\mid d_G(x)$ for all $x\in V(G)\setminus U$, there exists a collection $\cF$ of edge-disjoint copies of $C_{2k}$ such that every edge in $G-G[U]$ is covered and $\Delta (\bigcup \cF[U])\leq \nu^2 |U|/4$.
\end{lemma}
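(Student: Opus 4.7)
The plan is to mimic the proof of Lemma~\ref{lem:coverdown:bip} but working with expansion in place of the bipartite minimum degree. Choose constants $\gamma,\xi$ with $1/n\ll\gamma\ll\xi\ll\nu,1/k$, set $W:=V(G)\setminus U$, $m:=\lceil\xi^{-1}\rceil$ and $M:=\binom{m+1}{2}$, and split $U$ into a random equipartition $V_1,\dots,V_M$. A Chernoff argument (Lemma~\ref{lem:chernoff}), applied in the spirit of Proposition~\ref{prop:randomsubset}, shows that with positive probability each $V_i$ inherits the expansion property, i.e.\ for every $x\in V(G)$, $N_G(x,V_i)$ is $\nu/2$-expanding in $G[V_i]$. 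Simultaneously split $W$ into an equipartition $W_1,\dots,W_m$ and enumerate the $M$ graphs $G[W_i]$ and $G[W_i,W_j]$ as $G_W^1,\dots,G_W^M$, so that $G[W]=\bigcup_i G_W^i$ and each $|V(G_W^i)|\leq 7\xi n$.

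For each $i$, let $R_i:=G[V_i,V(G_W^i)]$ and $R:=\bigcup_i R_i$; as in the bipartite proof a direct degree count gives $\Delta(R)\leq 7\xi n$. The graph $G':=G-(G[U]\cup R)$ is still a $2\nu$-expander (removing a sparse subgraph preserves expansion), so Lemma~\ref{lem:boundmaxdegree} (with $\gamma$ in place of $\nu$) produces an approximate $C_{2k}$-decomposition $\cF_1$ of $G'$ whose leftover $H$ satisfies $\Delta(H)\leq\gamma n$. Next, for each $i$ set $H_i:=H[W]\cap G_W^i$ and $G_i:=G[V_i]\cup R_i\cup H_i$. Since $|V_i|\geq\xi^2|V(G_i)|$ and $N_{G_i}(x,V_i)$ is $\xi^2$-expanding in $G_i[V_i]$, Proposition~\ref{prop:cover-sparse-graph} (applied with $V_i$ as the reservoir $R$) yields $J_i\subseteq G_i$ with $J_i[V(G_i)\setminus V_i]$ empty, $J_i\cup H_i$ a $C_{2k}$-decomposable graph, and $\Delta(J_i)\leq\xi|V_i|$. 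Summing, $J:=\bigcup J_i$ satisfies $\Delta(J)\leq\xi n$ and $J\cup H[W]$ has a $C_{2k}$-decomposition $\cF_2$.

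It remains to cover $G'':=G-\bigcup(\cF_1\cup\cF_2)$, which is empty on $W$ and has $\Delta(G'')\leq\gamma n+7\xi n\leq 8\xi n$. Crucially, because $G$ is $2$-divisible on $W$ (by hypothesis) and we have removed only closed copies of $C_{2k}$, every $w\in W$ still has even degree in $G''$, so we may arbitrarily pair the vertices in each $N_{G''}(w)$. This produces a list $P$ of at most $O(\xi n^2)$ pairs of vertices in $U$, with each vertex of $U$ appearing in at most $\Delta(G'')\leq 8\xi n$ pairs. Since $G''[U]$ is still a $(5\nu/2)$-expander (as $\Delta(J)\leq\xi n$), Proposition~\ref{prop:pairing} (applied to $G''[U]$ with paths of length $2k-2$) returns edge-disjoint paths joining each pair so that $\Delta$ of the union is at most $\nu^2|U|/8$. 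Combined with the pairs' incident edges at $W$, these paths form a family $\cF_3$ of copies of $C_{2k}$ covering all remaining edges of $G-G[U]$, with $\Delta(\bigcup\cF_3[U])\leq\nu^2|U|/8$. Setting $\cF:=\cF_1\cup\cF_2\cup\cF_3$ gives $\Delta(\bigcup\cF[U])\leq\Delta(J)+\Delta(\bigcup\cF_3[U])\leq\nu^2|U|/4$, as required.

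The main obstacle is the last step: we must find the $O(\xi n^2)$ edge-disjoint connecting paths inside $G''[U]$ without spoiling the $\nu^2|U|/4$ bound on the maximum degree of $\bigcup\cF[U]$. This is exactly the point where the bipartite proof used the high bipartite minimum degree, and in the present setting it is replaced by the robust-expansion property of $G''[U]$ combined with Proposition~\ref{prop:pairing}; verifying that Proposition~\ref{prop:pairing} applies (i.e.\ that $G''[U]$ is still an expander after removing the sparse graphs $J$ and the edges used so far) is the technical crux.
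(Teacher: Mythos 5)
Your argument follows the paper's proof of Lemma~\ref{lem:coverdown} essentially step for step: the same random partition of $U$ into $M$ parts and of $W$ into $m$ parts, the same reservoir graphs $R_i$, and the same sequence of applications of Lemma~\ref{lem:boundmaxdegree}, Proposition~\ref{prop:cover-sparse-graph} and Proposition~\ref{prop:pairing}. One small correction: $G''[U]$ is only a $\nu/2$-expander (its expansion comes from the hypothesis that $N_G(x,U)$ is $\nu$-expanding in $G[U]$, not from the $3\nu$-expansion of $G$, which says nothing about robust neighbourhoods inside $U$), but this weaker constant is all that Proposition~\ref{prop:pairing} requires, so the argument is unaffected.
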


Finally, we use Lemma~\ref{lem:coverdown} and induction to prove Lemma~\ref{lem:nearoptimal}.

\begin{proofof}{Lemma~\ref{lem:nearoptimal}}
If $\ell=0$, we can set $H_\ell:=G$, so we assume $\ell\geq 1$. We will prove the following statement (which implies Lemma~\ref{lem:nearoptimal}) by induction on $\ell$. 

\begin{quote}
\textit{Let $G$ be a $2$-divisible $4\nu$-expander and let $U_1\subseteq V(G)$ of size $\lfloor \nu |G|\rfloor$ such that $N_G(x)$ is $9\nu/2$-expanding in $G[U_1]$ for all $x\in V(G)$. Let $U_1 \supseteq \dots \supseteq U_\ell$ be a $(5\nu,\nu,m)$-expander vortex in $G[U_1]$. Then there exists $H_\ell\subseteq G[U_\ell]$ such that $G-H_\ell$ is $C_{2k}$-decomposable.}
\end{quote}

If $\ell=1$, the statement follows directly from Lemma~\ref{lem:coverdown} applied to $G$ and $U_1$. Assume then that $\ell\geq 2$ and the claim holds for $\ell-1$. Let $G':=G-G[U_2]$ and note that $G'$ is a $3\nu$-expander%
\COMMENT{$4\nu n-|U_2|\geq 3\nu n$}
and $N_{G'}(x)$ is $\nu$-expanding%
\COMMENT{$9\nu |U_1|/2-|U_2|\geq \nu|U_1|$}
in $G'[U_1]$ for all $x\in V(G)$.
Furthermore, for all $x\in V(G')\setminus U_1$, $d_{G'}(x)=d_G(x)$ so $2\mid d_{G'}(x)$. Apply Lemma~\ref{lem:coverdown} to find a collection $\cF$ of edge-disjoint copies of $C_{2k}$ covering all edges in $G'-G[U_1]$ such that $\Delta(\bigcup \cF[U_1])\leq \nu^2|U_1|/4$.
Let $G'':=G[U_1]-\bigcup \cF$. Then $G''$ is $2$-divisible and $G''$ is a $4\nu$-expander%
\COMMENT{$9\nu|U_1|/2-2\nu^2|U_1|/4\geq 4\nu|U_1|$.} 
and $U_2\subseteq V(G'')$ with $|U_2|=\lfloor \nu |G''|\rfloor$. Moreover, for any $x\in V(G'')$, $N_{G''}(x)$ is $9\nu/2$-expanding%
\COMMENT{$5\nu|U_2|-\nu^2|U_1|/4\geq 5\nu|U_2|-\nu|U_2|/2$. Only $N_{G''}(x)$ affected, not the robust neighbourhoods.}
in $G[U_2]$. Since $G''[U_2]=G[U_2]$, $U_2 \supseteq \dots \supseteq U_\ell$ is a $(5\nu,\nu,m)$-expander vortex in $G[U_2]$. Hence, by induction, there exists $H_\ell\subseteq G[U_\ell]$ such that $G''-H_\ell$ has a  $C_{2k}$-decomposition $\cF'$. Together $\cF\cup \cF'$ is a $C_{2k}$-decomposition of $G-H_\ell$.
\end{proofof}

Finally, we prove the main result in this section, Theorem~\ref{thm:nuexpander}.

\begin{proofof}{Theorem~\ref{thm:nuexpander}}
Let $m,m'\in \N$ and $\mu$ be such that
$$1/n\ll 1/m'\ll 1/m\ll \mu\ll \nu, 1/k.$$
Let $\nu':=\nu/7$. Apply Lemma~\ref{lem:getvortex} to $G$ to find a $(6\nu', \nu', m)$-expander vortex $U_0\supseteq U_1\supseteq \dots \supseteq U_\ell$ in $G$.
Let $G_1:=G-G[U_1]$. Since $|U_1|\leq \nu'n$, $G_1$ is a $\nu/2$-expander (and if $k=4$, $\delta(G_1)\geq n/2-\nu'n$) which implies that between any two vertices in $G_1$, there are at least $m'$ internally disjoint paths of length $k-1$. Apply Lemma~\ref{lem:abs:c2k} to the graph $G_1$ with $U_\ell$ playing the role of $U$ to find $A^*\subseteq G_1$ as in the lemma. Let $G^*:=G-A^*$ and note that $G^*$ is $C_{2k}$-divisible. We have $\Delta (A^*)\leq |A^*|\leq 2^{m^2}$, so $G^*$ is a $4\nu'$-expander and $U_0\supseteq U_1\supseteq \dots \supseteq U_\ell$ is a $(5\nu', \nu', m)$-vortex in $G^*$. Then apply Lemma~\ref{lem:nearoptimal} to $G^*$ to find $H_\ell\subseteq G^*[U_\ell]$ such that $G^*-H_\ell$ has a $C_{2k}$-decomposition. Observing that $A^*\cup H_\ell$ has a $C_{2k}$-decomposition (by Lemma~\ref{lem:abs:c2k}) completes the proof.
\end{proofof}

\section{Concluding remarks}\label{sec:conclud}

In Theorem~\ref{thm:mainc2k}, we have found exact minimum degree bounds for a graph to have a decomposition into cycles of all even lengths apart from six. For cycles of length six, the best bound is given by Theorem~\ref{thm:witheps} and remains at $(1/2+\eps)|G|$ which is asymptotically best possible. We conjecture that the bound should also be $|G|/2$ in this case but were unable to prove this using the methods of Section~\ref{sec:longercycle}. The primary reason for this was that we were unable to construct a $C_6$-absorber which could be found in a $\nu$-expander. The transformer construction given in Section~\ref{sec:longerabs} works well for longer cycles since these transformers can be constructed using paths of length at least three between the fixed vertices. But, when the cycle is shorter, we do not have enough flexibility when choosing the intermediate vertices. This means that we were only able to prove the expander decomposition result, Theorem~\ref{thm:nuexpander}, for cycles of length at least eight. There are also places in the proofs of Lemmas~\ref{lem:closeclique}~and~\ref{lem:closebip} where we require the cycle to have length at least eight, though it is likely that these arguments could be adapted for $C_6$-decompositions if required.

\section*{Acknowledgements}
The author would like to thank Daniela K\"uhn and Deryk Osthus for their guidance and helpful comments.

\medskip

{\footnotesize \obeylines \parindent=0pt

Amelia Taylor 
School of Mathematics
University of Birmingham
Edgbaston
Birmingham
B15 2TT
UK
}
\begin{flushleft}
{\it{E-mail address}:
\tt{a.taylor.maths@outlook.com}}
\end{flushleft}

\end{document}